\algnewcommand\algorithmicinput{\textbf{INPUT:}}
\algnewcommand\INPUT{\item[\algorithmicinput]}
\algnewcommand\algorithmicoutput{\textbf{OUTPUT:}}
\algnewcommand\OUTPUT{\item[\algorithmicoutput]}
\newcommand{\RN}[1]{%
  (\textup{\uppercase\expandafter{\romannumeral#1}})%
}
\newcolumntype{H}{>{\setbox0=\hbox\bgroup}c<{\egroup}@{}}
\theoremstyle{plain}
\newtheorem{thm}{Theorem}
\newtheorem{lemma}[thm]{Lemma}
\newtheorem{prop}[thm]{Proposition}
\theoremstyle{definition}
\newtheorem{defn}{Definition}
\newtheorem{exmp}{Example}
\newtheorem{remark}{Remark}
\DeclareMathOperator*{\argmax}{arg\,max}
\newcommand{\Mod}[1]{\ (\mathrm{mod}\ #1)}
\title{On robustness and local differential privacy}
\author{Mengchu Li}
\author{Thomas B.~Berrett}
\author{Yi Yu}
\affil{Department of Statistics, University of Warwick}
\date{\today}
\begin{document}

\maketitle

\begin{abstract}
It is of soaring demand to develop statistical analysis tools that are robust against contamination as well as preserving individual data owners' privacy. In spite of the fact that both topics host a rich body of literature, to the best of our knowledge, we are the first to systematically study the connections between the optimality under Huber's contamination model and the local differential privacy (LDP) constraints.  

In this paper, we start with a general minimax lower bound result, which disentangles the costs of being robust against Huber's contamination and preserving LDP.  We further study four concrete examples: a two-point testing problem, a potentially-diverging mean estimation problem, a nonparametric density estimation problem and a univariate median estimation problem.  For each problem, we demonstrate procedures that are optimal in the presence of both contamination and LDP constraints, comment on the connections with the state-of-the-art methods that are only studied under either contamination or privacy constraints, and unveil the connections between robustness and LDP via partially answering whether LDP procedures are robust and whether robust procedures can be efficiently privatised. Overall, our work showcases a promising prospect of joint study for robustness and local differential privacy. 

\medskip
\textbf{Keywords}: Huber's contamination model; Local differential privacy; Minimax optimality.
\end{abstract}

\section{Introduction}\label{sec-introduction}

In modern data collection and analysis, the privacy of individuals is a key concern. There has been a surge of interest in developing data analysis methodologies that yield strong statistical performance without compromising individuals' privacy, largely driven by applications in modern technology, including in Google \citep[e.g.][]{erlingsson2014rappor}, Apple \citep[e.g.][]{tang2017privacy} and Microsoft \citep[e.g.][]{ding2017collecting}, and by pressure from regulatory bodies \citep[e.g.][]{forti2021deployment,aridor2021effect}. The prevailing framework for the development of private methodology is that of differential privacy \citep{dwork2006calibrating}.  Although this originates in cryptography, there is a growing body of statistical literature that aims to explore the constraints of this framework and provide procedures that make optimal use of available data \citep[e.g.][]{wasserman2010statistical, duchi2018minimax, rohde2020geometrizing, cai2021cost}. Work in this area is split between central models of privacy, where there is a third party trusted to collect and analyse data before releasing privatised results, and local models of privacy, where data are randomised before collection.  We, in this paper, will consider the local differential privacy constraint, to be formally defined in \Cref{sec-general-setup}. While classical methods for locally private analysis are restricted to the estimation of the parameter of a binomial distribution \citep{warner1965randomized}, modern research has resulted in mechanisms for many other statistical problems including various hypothesis testing problems \citep[e.g.][]{kairouz2014extremal, joseph2019role, berrett2020locally, acharya2021interactive, lam2020minimax}, mean and median estimation \citep[e.g.][]{duchi2018minimax}, nonparametric estimation problems \citep[e.g.][]{rohde2020geometrizing, butucea2020local}, and change point analysis \citep[e.g.][]{berrett2021changepoint,li2022network}, to name but a few.

In addition to preserving individuals' privacy, being robust to outliers and adversarial contamination is another desideratum for modern learning algorithms. The study of robust statistical procedures has sparked great interest among statisticians and there have been a number illuminating textbooks in this area \citep[e.g.][]{huber1981robust,hampel2011robust,huber2004robust,maronna2019robust}. The focus of classical robust statistics is on the analysis of outliers' influence on statistical procedures, quantified through specific notions such as the breakdown point and influence function. Due to the demands of analysing more complex data types, the focus has, more recently, shifted towards providing non-asymptotic guarantees on convergence rates under various contamination models, including heavy-tailed models \citep[e.g.][]{catoni2012challenging,lugosi2019sub}, different types of model mis-specification \citep[e.g.][]{huber1968robust, cherukuri2020consistency}, and strong contamination models \citep[e.g.][]{diakonikolas2017being, lugosi2021robust,pensia2020robust}.  A number of computationally-efficient algorithms that achieve (nearly) minimax rate-optimal rates under either one or more aforementioned models have also been proposed for various tasks; see \cite{diakonikolas2019robust} for a recent survey. 

The connections between differential privacy and robustness have been well studied in the central model of differential privacy, where there is a trusted data curator. A natural starting point for many differentially private estimators is a function of the data whose sensitivity to changes in single observations can be controlled \citep[e.g.][]{dwork2006calibrating, dwork2009differential, canonne2019structure, cai2021cost}. This is also the case for robust statistics, where estimators are often constructed in order to be minimally sensitive to arbitrary changes in a small number of data points \citep[e.g.][]{huber1981robust,huber2004robust}. See \cite{avella2020role} for further discussion of these connections. There has been, recently, an increasing trend, mostly in the theoretical computer science literature, of developing algorithms that are simultaneously robust and privacy preserving  under the central model \citep[e.g.][]{dimitrakakis2014robust,ghazi2021robust,esfandiari2021tight,kothari2021private,liu2021robust}. There is, however, little work on the connection between privacy and robustness in the local model of differential privacy. A key feature to set apart our work from the aforementioned robust procedures in the central models is that in the central models it is possible to add noise after computing robust estimators, but in local privacy the requirement to add noise to each observation separately means that the approaches taken in the two models are fundamentally different. Note that some very recent works \citep{cheu2021manipulation,acharya2021robust,chhor2022robust} consider contamination after the privatisation step and the results therein feature an interaction of privacy level $\alpha$ and contamination level $\varepsilon$. In our work, we suppose that contamination happens before the data are sent for privatisation.

\subsection{A summary of our contributions}\label{sec-contributions}

This paper concerns the pursuit of answers to the questions:
\begin{itemize}
    \item [Q1.] \emph{Can robust procedures be directly applied to privatised information and attain optimal performance?}
    \item [Q2.] \emph{Can locally private procedures be automatically robust?}
\end{itemize}
To address the aforementioned questions, in this work we will study a range of statistical problems, including hypothesis testing (\Cref{sec1}), mean estimation (\Cref{sec2}), nonparametric density estimation (\Cref{sec3}) and median estimation (\Cref{secE}) with data assumed to be generated according to Huber's $\varepsilon$-contamination model \citep[e.g.][]{huber1992robust}, specified in \eqref{hubermodel}.

As for Q1, when studying Huber's contamination model without privacy constraints, \cite{chen2016general} developed a general theory and showed that a Scheff\'e tournament method provides optimal estimators for many problems. Such a method discretises the parameter space and reduces estimation problems to hypothesis selection problems. However, it was shown by \cite{gopi2020locally} that hypothesis selection is exponentially more difficult under local privacy constraints, prohibiting the use of this general learning scheme in many specific statistical learning tasks; we discuss this aspect in \Cref{sec-disc-test}.  Despite this negative answer to Q1 for this general robust procedure, for the specific problems considered in the sequel, we will show that optimal LDP procedures can be regarded as robust procedures applied to privatised data.  See Sections~\ref{sec-disc-test}, \ref{sec-disc-mean} and \ref{sec-disc-density} for details.

As for Q2, it turns out that there are deep connections between locally private estimation and estimation within Huber's contamination model. \cite{chen2016general} showed that the total variation modulus of continuity, defined in \eqref{eq-TV-modulus-conti}, controls the difficulty of a wide range of statistical problems studied under contamination. On the other hand, \cite{rohde2020geometrizing} showed that this modulus is also the key quantity in understanding the difficulty of a general class of estimation problems under local privacy. We further explore this relationship, and show that suitably-chosen procedures for locally private estimation are also optimal when Huber's contamination is introduced.  More importantly, we see that in specific problems the costs of preserving privacy and contamination are separable, which matches the intuition behind our lower bound result in \Cref{general_lower}.

In this paper, we will show that for a range of statistical problems, we are able to find procedures that are simultaneously robust, privacy-preserving and statistically rate-optimal, in terms of the contamination proportion $\varepsilon$, the privacy parameter $\alpha$, the sample size $n$ and other model parameters that may occur in specific problems.

\begin{itemize}
    \item Section~\ref{sec1} considers a simple hypothesis testing problem. When contamination is introduced, this becomes a composite hypothesis testing problem where the separation between the hypotheses depends on the level of contamination. We study a combination of the Scheff\'e test \citep{devroye2001combinatorial} and the randomised response mechanism \citep{warner1965randomized}, that results in a test that has previously been used for hypothesis testing under local differential privacy constraint \citep[e.g.][]{joseph2019role,gopi2020locally}. We obtain matching upper and lower bounds to prove that this procedure is optimal under Huber's contamination and privacy constraints.
    
    \item In Section~\ref{sec2}, we turn our attention to robust mean estimation, where the inlier distribution has bounded $k$th central moment for some fixed $k>1$, and unknown mean in $[-D,D]$ for some potentially diverging $D \geq 1$. We propose a procedure that is minimax rate-optimal in terms of the mean upper bound $D$, the sample size $n$, the privacy parameter $\alpha$ and the contamination proportion $\varepsilon$.  Previous work on mean estimation under local differential privacy \citep{duchi2018minimax} assumes that $D = 1$ and deploys a Laplace privacy mechanism, which we show is sub-optimal when $D$ is large. Previous work has noted the difficulty of private estimation with unbounded parameter spaces, both in the central model of privacy \citep{brunel2020propose,kamath2021private} and the local model \citep{duchi2013local}. In the central model, bounds on the unknown mean can be avoided with careful procedures, but in the local model consistent estimation is impossible when $D=\infty$, even without contamination \citep[see Appendix G,][]{duchi2013local}. We derive a phase transition phenomenon, whereby there exists a boundary for $D$ beyond which consistent estimation is impossible and below which a rate-optimal procedure is available.
    
    \item We study nonparametric density estimation problems in Section~\ref{sec3}. The procedures we consider are the basis expansion procedures of \cite{duchi2018minimax} and \cite{butucea2020local}. We give new analyses to show that these privacy procedures remain minimax rate-optimal with additional Huber's contamination introduced, for squared-$L_2$ and $L_\infty$ losses, respectively.
    
    \item In \Cref{secE}, we study a univariate median estimation problem, deploying a locally private stochastic gradient descent method \citep[e.g.][]{duchi2018minimax}.  Different from the privacy mechanism in the aforementioned three problems, the privacy mechanism here is sequentially interactive.  We show that this method is robust against Huber contamination and minimax rate-optimal regarding all the model parameters.
\end{itemize}

\subsection{General setup}\label{sec-general-setup}

We will now formally define the framework we study. Let $\mathcal{P}$ denote a class of distributions on the sample space $\mathcal{X}$ and let $\mathcal{G} \supset \mathcal{P}$ denote the set of all distributions on $\mathcal{X}$.  Two key ingredients in this paper are: (a) robustness against Huber's contamination and (b) privacy preservation in the sense of local differential privacy.

As for the contamination, to be specific, we consider problems where data are generated not directly from $P \in \mathcal{P}$, but from a contaminated distribution $P_\varepsilon \in \mathcal{P}_\varepsilon(\mathcal{P})$, where $\mathcal{P}_\varepsilon(\mathcal{P})$ is defined as
\begin{equation}\label{hubermodel}
    \mathcal{P}_\varepsilon(\mathcal{P}) = \{P_\varepsilon = (1-\varepsilon)P+\varepsilon G: \, \varepsilon \in [0,1], \, P \in \mathcal{P}, \, G \in \mathcal{G}\}.
\end{equation}
The class of distributions $\mathcal{P}_\varepsilon(\mathcal{P})$ is known as the Huber $\varepsilon$-contamination model \citep{huber2004robust} in the robust statistics literature.

As for the privacy, formally speaking, a privacy mechanism is a conditional distribution of the privatised data given the raw data, i.e.~$Q(\cdot | x_1,\ldots,x_n)$, $\{x_i\}_{i = 1}^n \subset \mathcal{X}$, when the raw data are $\{X_i\}_{i = 1}^n = \{x_i\}_{i = 1}^n$.  A privacy mechanism is said to be \emph{$\alpha$-differentially private}, if for all possible observations $\{x_i\}_{i = 1}^n$ and $\{x'_i\}_{i = 1}^n$ that differ in at most one coordinate, it holds that 
\begin{equation} \label{Eq:alphaDP}
    \sup_{A} \frac{Q(A | x_1,\ldots,x_n)}{Q(A | x_1',\ldots,x_n')} \leq e^{\alpha},
\end{equation}
where the supremum is taken over all measurable sets $A$. 

For an $\alpha$-differentially private mechanism to satisfy \emph{the local privacy constraint}, the output is of the form $\{Z_i\}_{i = 1}^n \subset \mathcal{Z}$, where $Z_1$ is generated solely based on $X_1$, and for each $i \in \{2, \ldots, n\}$, $n \geq 2$, $Z_i$ is generated based on $X_i$ and $\{Z_j\}_{j = 1}^{i-1}$.  The constraint \eqref{Eq:alphaDP} can therefore be written in terms of the conditional distributions $Q_i$'s that generate $Z_i$'s, i.e.
\begin{equation}\label{alpha-ldp}
    \max_{i=1,\ldots,n} \sup_{A}\sup_{z_1,\dotsc,z_{i-1} \in \mathcal{Z}}\sup_{x,x'\in \mathcal{X}}\frac{Q_i(A|x,z_1,\dotsc,z_{i-1})}{Q_i(A|x',z_1,\dotsc,z_{i-1})} \leq e^\alpha,
\end{equation}
with the convention that $\{z_1, \cdots, z_j\} = \emptyset$ if $j < 1$.  Any conditional distribution $Q$ that satisfies~\eqref{alpha-ldp} is said to be an \emph{$\alpha$-locally differentially private (LDP) privacy mechanism} \citep[see e.g.][]{duchi2018minimax}. We write $\mathcal{Q}_\alpha$ for the set of all $\alpha$-LDP privacy mechanisms.   

From \eqref{alpha-ldp}, we can see that $\alpha>0$ represents the desired level of privacy which is an input to the data analysis -- the larger $\alpha$ is the less protected the raw data are.  In this paper we focus on the high-privacy regime $0 < \alpha \leq 1$, where $\alpha$ may be a function of the sample size~$n$.  As we will discuss in more detail later, we will require that $n \alpha^2$ diverges, as the sample size $n$ grows unbounded.

With both the ingredients in hand, we define the \emph{$\alpha$-LDP minimax risk under contamination}, which is at the centre of the analysis in this paper; that is,
\begin{equation}\label{intro_minimax}
    \mathcal{R}_{n,\alpha}(\theta(\mathcal{P}), \Phi \circ \rho, \varepsilon) = \inf_{Q \in \mathcal{Q}_\alpha} \inf_{\hat{\theta}} \sup_{P_\varepsilon \in \mathcal{P}_\varepsilon(\mathcal{P})} \mathbb{E}_{P_\varepsilon,Q}\left[\Phi \circ \rho\left\{\hat{\theta}, \, \theta(P)\right\}\right],
\end{equation}
where
\begin{itemize}
    \item the population quantity of interest is $\theta(P) \in \Theta$, denoting a functional supported on $\mathcal{P}$;
    \item the bivariate function $\rho$ is a semi-metric on the space $\Theta$ and $\Phi: \mathbb{R}_{+} \rightarrow \mathbb{R}_{+}$ is a non-decreasing function with $\Phi(0) = 0$;
    \item the first infimum is taken over all possible $\alpha$-LDP privacy mechanisms;
    \item the second infimum is over all measurable functions $\hat{\theta} = \hat{\theta}(Z_1,\dotsc,Z_n)$ of the privatised data generated from privacy mechanism $Q$; and
    \item the loss function is in the form of an unconditional expectation, with respect to both the data generating mechanism $P_{\varepsilon}$ and the privacy mechanism $Q$.
\end{itemize}

As detailed in \eqref{intro_minimax}, our goal is to understand the fundamental limits imposed by both the contamination and the privacy constraint.  In \Cref{general_lower} below, we show that such statistical tasks are at least as hard as either only preserving privacy at level $\alpha$ without contamination or only being robust against contamination without preserving privacy. 

\begin{prop}\label{general_lower}
For any $\varepsilon \in [0, 1)$, define the total variation modulus of continuity $\omega(\varepsilon)$ to be 
    \begin{equation}\label{eq-TV-modulus-conti}
        \omega(\varepsilon) = \sup\{\rho(\theta(R_0), \theta(R_1)): \, \mathrm{TV}(R_0, R_1) \leq \varepsilon/(1-\varepsilon), R_0, R_1 \in \mathcal{P}\}.
    \end{equation}
    Define the $\alpha$-LDP minimax risk without contamination to be 
    \[
        \mathcal{R}_{n,\alpha}(\theta(\mathcal{P}), \Phi \circ \rho) = \mathcal{R}_{n,\alpha}(\theta(\mathcal{P}), \Phi \circ \rho, 0),
    \]
    with $\mathcal{R}_{n,\alpha}(\cdot, \cdot, \cdot)$ defined in \eqref{intro_minimax}. For any given $\alpha \in (0,\infty)$, it holds that 
    \[
        \mathcal{R}_{n,\alpha}(\theta(\mathcal{P}), \Phi \circ \rho, \varepsilon) \geq \mathcal{R}_{n,\alpha}(\theta(\mathcal{P}), \Phi \circ \rho) \vee \frac{\Phi(\omega(\varepsilon)/2)}{2}.
    \]
\end{prop}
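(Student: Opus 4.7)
The plan is to prove the two lower bounds in the maximum separately; the first is an immediate set-inclusion argument, and the second is a classical Le Cam two-point lower bound combined with the Huber coupling fact that underlies the total variation modulus of continuity.

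For the first bound $\mathcal{R}_{n,\alpha}(\theta(\mathcal{P}), \Phi \circ \rho, \varepsilon) \geq \mathcal{R}_{n,\alpha}(\theta(\mathcal{P}), \Phi \circ \rho)$, I would note that taking $G = P$ in the definition \eqref{hubermodel} yields $(1-\varepsilon)P + \varepsilon P = P$, so $\mathcal{P} \subseteq \mathcal{P}_\varepsilon(\mathcal{P})$. The two minimax problems differ only in their sup-over-distributions sets, and the contaminated problem uses the larger one, so the first bound follows without further work.

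For the second bound $\mathcal{R}_{n,\alpha}(\theta(\mathcal{P}), \Phi \circ \rho, \varepsilon) \geq \Phi(\omega(\varepsilon)/2)/2$, the key ingredient is the standard Huber indistinguishability fact: for any $R_0, R_1 \in \mathcal{P}$ with $\mathrm{TV}(R_0, R_1) \leq \varepsilon/(1-\varepsilon)$, there exist $G_0, G_1 \in \mathcal{G}$ such that $(1-\varepsilon)R_0 + \varepsilon G_0 = (1-\varepsilon)R_1 + \varepsilon G_1 =: P_\varepsilon^*$. Explicitly, writing $r_0, r_1$ for densities with respect to a dominating measure, one can take $G_b$ to have density proportional to $(r_{1-b} - r_b)_+$ plus a common base, the scaling being admissible precisely under the stated TV bound. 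Given such a coupling, both $P_\varepsilon^0 := (1-\varepsilon)R_0+\varepsilon G_0$ and $P_\varepsilon^1 := (1-\varepsilon)R_1+\varepsilon G_1$ produce identical distributions of the raw data $(X_1,\ldots,X_n)$, and hence, for any $Q \in \mathcal{Q}_\alpha$, identical distributions of the privatised data $(Z_1,\ldots,Z_n)$ and any estimator $\hat\theta = \hat\theta(Z_1,\ldots,Z_n)$.

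A standard two-point argument then closes the proof. For any such $\hat\theta$, the semi-metric triangle inequality $\rho(\hat\theta, \theta(R_0)) + \rho(\hat\theta, \theta(R_1)) \geq \rho(\theta(R_0), \theta(R_1))$ forces $\rho(\hat\theta, \theta(R_b)) \geq \rho(\theta(R_0), \theta(R_1))/2$ for at least one $b \in \{0,1\}$, and the monotonicity of $\Phi$ with $\Phi(0)=0$ together with $P_\varepsilon^0 = P_\varepsilon^1$ yields
\begin{equation*}
    \sup_{P_\varepsilon \in \mathcal{P}_\varepsilon(\mathcal{P})} \mathbb{E}_{P_\varepsilon,Q}\bigl[\Phi \circ \rho(\hat\theta, \theta(P))\bigr] \geq \max_{b \in \{0,1\}} \mathbb{E}_{P_\varepsilon^*,Q}\bigl[\Phi \circ \rho(\hat\theta, \theta(R_b))\bigr] \geq \tfrac{1}{2}\Phi\bigl(\rho(\theta(R_0), \theta(R_1))/2\bigr).
\end{equation*}
Taking the infimum over $\hat\theta$ and $Q$ on the left and the supremum over admissible pairs $(R_0, R_1)$ on the right, and invoking the definition of $\omega(\varepsilon)$, gives the second bound. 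The only step requiring care is the Huber coupling construction, which is routine; everything else is bookkeeping, and combining the two bounds yields the stated maximum.
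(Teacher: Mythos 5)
Your proposal is correct and follows essentially the same route as the paper: the first term in the maximum comes from the set inclusion $\mathcal{P} \subseteq \mathcal{P}_\varepsilon(\mathcal{P})$, and the second from the Huber coupling (the paper's Lemma~\ref{2eps}, from Chen, Gao and Ren) making the two contaminated laws identical, followed by a two-point lower bound. The only cosmetic difference is that the paper invokes the privacy-aware Le Cam bound of Duchi, Jordan and Wainwright (2018, Proposition~1), whereas you rederive the elementary two-point inequality directly from the triangle inequality and monotonicity of $\Phi$; since the two contaminated distributions coincide and hence have $\mathrm{TV}=0$, the privacy factor in their bound is vacuous, so the two arguments are equivalent in content.
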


We remark that Theorem 5.1 in \cite{chen2018robust} provided a general lower bound under the Huber contamination model for non-private data.  \Cref{general_lower} extends it to a general case that accounts for the LDP constraint. A key aspect of the lower bound in \Cref{general_lower} is that the cost of contamination is separated from that of privacy, i.e.~the level of privacy required does not increase the error introduced by the contamination.  In the sequel, we will show that this lower bound is tight in the estimation problems we consider, while a similar decoupling of a slightly different form can be found for our testing problem.  In the examples we consider, the difficulty of the combined problem of robustness and LDP is the difficulty of the harder one of the two individual problems.  We see this disentanglement as a sign of the connection between privacy and robustness. In our examples we have shown that it is possible to find optimal procedures that are simultaneously privacy-preserving and robust. We have not found any problems for which privacy precludes robustness, or vice-versa. 

The intuition behind \Cref{general_lower} is that if the two distributions are indistinguishable on the raw data space $\mathcal{X}$, then no `transformation'~$Q$ can distinguish them either. Note that a similar quantity to $\omega(\varepsilon)$ was used by \cite{donoho1991geometrizing}, where the Hellinger distance was considered instead of the total variance distance, to translate perturbations in the distribution to perturbations in the quantities of interest measured by the chosen loss function - $\rho(\theta(R_0), \theta(R_1))$.

\subsection{Notation}
For $a,b \in \mathbb{R}$, let $a \wedge b = \min(a,b)$, $a\vee b = \max{(a,b)}$ and $a_+ = a \vee 0$. For non-negative real sequences $\{a_n\}_{n \in \mathbb{N}_+}$ and $\{b_n\}_{n \in \mathbb{N}_+}$, $a_n \ll b_n$ denotes that $\lim_{n\rightarrow\infty}a_n/b_n = 0$, $a_n \gg b_n$ denotes that $b_n \ll a_n$, $a_n \lesssim b_n$ denotes the existence of a constant $C>0$ such that $\limsup_{n \rightarrow \infty} a_n/b_n \leq C$, $a_n \gtrsim b_n$ denotes that $b_n \lesssim a_n$ and $a_n \asymp b_n$ denotes that $a_n \lesssim b_n \lesssim a_n$.  Let $\mathbb{N}_+$ denote all positive integers and $\mathbb{R}_{+}$ denote the set of non-negative real numbers.  Let $|S|$ denote the cardinality of a set $S$.  For two distributions $P_a$ and $P_b$, their total variation distance is $\mathrm{TV}(P_a,P_b) = \sup_{S}|P_a(S)-P_b(S)|$, where the supremum is taken over all measurable sets $S$; and their Kullback--Leibler divergence is $\mathrm{KL}(P_a,P_b) = \int \log(\mathrm{d}P_a/\mathrm{d}P_b)\, \mathrm{d}P_a$, if $P_a$ is absolutely continuous with respect to $P_b$. Let $L_2[0,1] = \{f : [0,1] \rightarrow \mathbb{R}: \int_{0}^1 f^2(x) \, \mathrm{d}x < \infty\}$. A random variable $X$ is sub-exponential with parameters $(\tau, b)$ if $\mathbb{E}[\exp\{\lambda(X-\mathbb{E}(X))\}]\leq \exp(\lambda^2\tau^2/2)$, for $|\lambda| \leq 1/b$.

\section{Robust testing under local differential privacy}\label{sec1}

Under the general setup described in \Cref{sec-general-setup}, assuming that $X_1, \dotsc,X_n$ are i.i.d.~random variables generated from $P$ on $\mathcal{X}$, we consider the robust testing problem 
\begin{align}
    & \mathrm{H}_0: P\in \mathcal{P}_\varepsilon(P_0) = \{P_\varepsilon:\, (1-\varepsilon)P_0+\varepsilon G, \, G \in \mathcal{G}\} \nonumber \\
    \mathrm{vs.} \quad & \mathrm{H}_1: P\in \mathcal{P}_\varepsilon(P_1) = \{P_\varepsilon:\, (1-\varepsilon)P_1+\varepsilon G, \, G \in \mathcal{G}\}, \label{testing_p}
\end{align}
where $P_0$ and $P_1$ are two fixed distributions supported on $\mathcal{X}$, and $\mathcal{G}$ is the set of all distributions supported on $\mathcal{X}$.  In this section, we are interested in testing \eqref{testing_p} under an $\alpha$-LDP constraint. For a given $\alpha$-LDP privacy mechanism $Q$, we let $\Phi_Q = \{\phi:\mathcal{Z}^n \rightarrow \{0,1\}\}$ denote the set of all $\{0,1\}$-valued measurable functions of privatised data $\{Z_i\}_{i = 1}^n$ generated via the privacy mechanism $Q$. The $\alpha$-LDP minimax testing risk can be written as 
\begin{equation}\label{eq-def-test-minimax-risk}
\mathcal{R}_{n,\alpha}(\varepsilon) = \inf_{Q \in \mathcal{Q}_\alpha} \inf_{\phi\in \Phi_Q} \bigg\{\sup_{P \in \mathcal{P}_\varepsilon(P_0)}\mathbb{E}_{P,Q}(\phi) + \sup_{P' \in \mathcal{P}_\varepsilon(P_1)}\mathbb{E}_{P',Q}(1-\phi) \bigg\},
\end{equation}
which corresponds to \eqref{intro_minimax} with $\mathcal{P} = \{P_0,P_1\}$ and $\rho$ being the 0-1 loss, i.e.~$\rho = 0$ if $\phi$ returns the correct hypothesis and $\rho = 1$ otherwise. Note that, by considering the trivial test that always rejects $\mathrm{H}_0$, we have $\mathcal{R}_{n,\alpha}(\varepsilon) \leq 1$.

To fully understand the hardness of \eqref{testing_p}, we construct lower and upper bounds on $\mathcal{R}_{n,\alpha}(\varepsilon)$ in Sections~\ref{sec-lb-test} and \ref{sec-ub-test}, respectively. We conclude in \Cref{sec-disc-test} with a discussion of the existing literature. In \Cref{Sec:RDP} we show that very similar results hold when the LDP constraint is relaxed to a general class of local privacy constraints, namely R\'enyi local differential privacy.

\subsection{Lower bound}\label{sec-lb-test}

In \Cref{testing_lowerbound} below, we provide a lower bound on the $\alpha$-LDP minimax testing risk $\mathcal{R}_{n,\alpha}(\varepsilon)$, in terms of the sample size $n$, the privacy constraint $\alpha$, the total variation distance $\mathrm{TV}(P_0, P_1)$ and the Huber contamination proportion $\varepsilon$.  As we will discuss later, this result also serves as an infeasibility result by providing conditions on the testing problem \eqref{testing_p} for the existence of a non-trivial test with a vanishing risk.

\begin{prop} \label{testing_lowerbound}
For $\alpha \in (0,1)$ and the robust testing problem defined in \eqref{testing_p}, it holds that the $\alpha$-LDP minimax testing risk defined in \eqref{eq-def-test-minimax-risk} satisfies
\[
    \mathcal{R}_{n,\alpha}(\varepsilon) \geq \exp\left\{-16\alpha^2n \{\mathrm{TV}(P_0,P_1)-\varepsilon/(1-\varepsilon)\}_+^2\right\}.
\]
\end{prop}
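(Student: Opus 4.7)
The plan is a Le Cam two-point argument: reduce the composite robust testing problem to a simple test between a least-favourable contaminated pair, apply the $\alpha$-LDP KL contraction to the privatised marginals, and convert to a testing-risk lower bound via Bretagnolle--Huber.

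First I would handle the trivial regime $\mathrm{TV}(P_0, P_1) \leq \varepsilon/(1-\varepsilon)$, in which the right-hand side is $\exp(0) = 1$; here $\mathcal{P}_\varepsilon(P_0)$ and $\mathcal{P}_\varepsilon(P_1)$ intersect, and any test has total error exactly $1$ on a common element, so that $\mathcal{R}_{n,\alpha}(\varepsilon) = 1$. Henceforth assume $\mathrm{TV}(P_0, P_1) > \varepsilon/(1-\varepsilon)$. Using the Hahn--Jordan decomposition of the signed measure $P_0 - P_1$, I would construct contaminants $G, G'$ with $G$ proportional to the negative part $(P_1 - P_0)_+$ and $G'$ proportional to the positive part $(P_0 - P_1)_+$. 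Setting $P^*_0 := (1-\varepsilon)P_0 + \varepsilon G$ and $P^*_1 := (1-\varepsilon)P_1 + \varepsilon G'$, the representation $P^*_0 - P^*_1 = (1-\varepsilon)(P_0 - P_1) + \varepsilon(G - G')$ has its positive and negative parts maximally cancelled, yielding
\[
\mathrm{TV}(P^*_0, P^*_1) = (1-\varepsilon)\{\mathrm{TV}(P_0, P_1) - \varepsilon/(1-\varepsilon)\}.
\]

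Next, for any sequentially interactive $Q \in \mathcal{Q}_\alpha$, let $M^n_i$ denote the joint law of $(Z_1, \dotsc, Z_n)$ when $X_1, \dotsc, X_n$ are i.i.d.\ from $P^*_i$. Restricting the two suprema in the definition of $\mathcal{R}_{n, \alpha}(\varepsilon)$ to $P^*_0$ and $P^*_1$ and recognising $\inf_\phi \{M^n_0(\phi = 1) + M^n_1(\phi = 0)\} = 1 - \mathrm{TV}(M^n_0, M^n_1)$ gives
\[
\mathcal{R}_{n, \alpha}(\varepsilon) \geq \inf_{Q \in \mathcal{Q}_\alpha}\bigl\{1 - \mathrm{TV}(M^n_0, M^n_1)\bigr\}.
\]
I would then invoke the Duchi--Jordan--Wainwright one-shot KL contraction: conditional on any privatised history $z_{1:i-1}$, the $\alpha$-LDP constraint on $Q_i(\cdot \mid x, z_{1:i-1})$ forces the induced conditional marginals to have KL divergence at most $4(e^\alpha - 1)^2\, \mathrm{TV}(P^*_0, P^*_1)^2 \leq 16\alpha^2\, \mathrm{TV}(P^*_0, P^*_1)^2$, using $e^\alpha - 1 \leq 2\alpha$ for $\alpha \in (0, 1]$. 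Summing over $i = 1, \dotsc, n$ via the KL chain rule (valid under sequentially interactive composition) and using $(1-\varepsilon)^2 \leq 1$ yields
\[
\mathrm{KL}(M^n_0 \,\|\, M^n_1) \leq 16\, n\, \alpha^2\, \{\mathrm{TV}(P_0, P_1) - \varepsilon/(1-\varepsilon)\}_+^2.
\]

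The Bretagnolle--Huber inequality then converts this into the claimed exponential lower bound. The main obstacle is the combination of two ingredients requiring care: the least-favourable TV identity on a general sample space, which uses the Hahn decomposition of signed measures rather than explicit densities, and the KL chain rule under the sequentially interactive, non-product privacy channel; the one-shot LDP KL contraction itself is classical, and the final exponential form is obtained by carefully tracking constants through Bretagnolle--Huber (absorbing the usual $1/2$ factor via a case-split on whether the exponent exceeds a fixed threshold, where outside that threshold the exponential term is already dominated by the trivial bound $\mathcal{R}_{n,\alpha}(\varepsilon) \leq 1$).
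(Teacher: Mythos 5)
Your proposal is essentially the paper's proof: the paper also constructs the least-favourable contaminated pair $(M_0,M_1)$ via \Cref{2eps} (which is exactly the Hahn--Jordan construction you describe, yielding $\mathrm{TV}(M_0,M_1) = (1-\varepsilon)\{\mathrm{TV}(P_0,P_1) - \varepsilon/(1-\varepsilon)\}_+$), then applies the LDP KL contraction of Duchi et al.\ to the privatised joint laws, and finishes with Bretagnolle--Huber together with $e^\alpha - 1 \le 2\alpha$. Your instinct to worry about the factor $1/2$ in Bretagnolle--Huber is well placed, since the paper's displayed chain silently drops it; note, however, that the case-split fix you sketch does not quite close that gap as stated, because in the small-exponent regime the bound $\mathcal{R}_{n,\alpha}(\varepsilon) \le 1$ runs in the wrong direction, and Bretagnolle--Huber alone only yields a lower bound near $1/2$ there rather than near $1$.
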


An immediate consequence of this lower bound is that $\mathcal{R}_{n,\alpha}(\varepsilon) = 1$ whenever $\mathrm{TV}(P_0,P_1) \leq \varepsilon/(1-\varepsilon)$. Indeed, this can be seen by a simpler argument since, in this case, according to Lemma \ref{2eps} there exists some $\widetilde{P} \in \mathcal{P}_\varepsilon(P_0) \cap \mathcal{P}_\varepsilon(P_1)$, and therefore 
\[
\mathcal{R}_{n,\alpha}(\varepsilon) \geq \inf_{Q \in \mathcal{Q}_\alpha} \inf_{\phi\in \Phi_Q} \left\{\mathbb{E}_{\tilde{P},Q}[\phi]+\mathbb{E}_{\tilde{P},Q}[1-\phi] \right\} = 1.
\]
In particular, whenever $\varepsilon \geq 1/2$ we have that $\varepsilon/(1-\varepsilon) \geq 1 \geq \mathrm{TV}(P_0,P_1)$, so that $\mathcal{R}_{n,\alpha}(\varepsilon) = 1$.

\subsection{Upper bound}\label{sec-ub-test}

Given the lower bound in \Cref{testing_lowerbound}, we are to show that a folklore test based on the randomised response mechanism and the Scheff\'{e} set \citep[e.g.][]{devroye2001combinatorial} is minimax rate-optimal for the testing problem \eqref{testing_p}.  

\begin{itemize}
    \item [Step 1.] (Privatisation.) Given data $\{X_i\}_{i = 1}^n$, let $Y_i = \mathbbm{1}\{X_i \in A^c\}$, where $A$ is the Scheff\'{e} set of $P_0$ and $P_1$ in \eqref{testing_p}, i.e.~$A = \argmax_{S \subset \mathcal{X}} \{P_0(S) - P_1(S)\}$.  Let the privatised data~$Z_i$'s be obtained via the randomised response mechanism.  To be specific, let $\{U_i\}_{i = 1}^n$ be independent $\mathrm{Unif}[0, 1]$ random variables that are independent of $\{X_i\}_{i = 1}^n$.  For $i \in \{1, \ldots, n\}$, let
    \begin{equation}\label{eq-private-mechanism-test}
        Z_i = \begin{cases}
            Y_i, & U_i \leq e^{\alpha}/(1 + e^{\alpha}), \\
            1 - Y_i, & \mbox{otherwise}.
        \end{cases}
    \end{equation}
    \item [Step 2.] (Test construction.) Let 
    \[
        \widehat{N}_0 = \sum_{i=1}^n \mathbbm{1}\{Z_i = 0\} \quad \text{and} \quad \widetilde{N}_0 = \frac{e^\alpha + 1}{e^\alpha - 1} \left(\widehat{N}_0 - \frac{n}{e^\alpha + 1}\right).
    \]
    The test is then defined as 
    \begin{equation}\label{eq-test-optimal}
        \tilde{\phi} =\mathbbm{1}\left\{|\widetilde{N}_0/n - P_0(A)|>|\widetilde{N}_0/n - P_1(A)|\right\} = \mathbbm{1} \left\{ 2\widetilde{N}_0/n < P_0(A) + P_1(A) \right\}.
    \end{equation}
\end{itemize}

Note that the privacy mechanism defined in Step 1 satisfies the $\alpha$-LDP constraint in \eqref{alpha-ldp} \cite[e.g.][]{gopi2020locally}.  In fact, the test $\tilde{\phi}$ and a non-private counterpart have been used in the non-robust two-point testing problem with LDP constraints (e.g.~Algorithm 6 in \citealt{joseph2019role}; Algorithm 4 in \citealt{gopi2020locally}) and the robust two-point testing problem without LDP constraints \citep[e.g.\ Section 2 in][]{chen2016general}, respectively.  It is known to be optimal in both cases.  The following theorem, combining previous analyses, shows that this test $\tilde{\phi}$ is still optimal under both the privacy constraint and the presence of contamination.  

\begin{thm}\label{testing_upperbound}
For $\alpha \in (0,1)$ and the robust testing problem defined in \eqref{testing_p}, assuming that $\mathrm{TV}(P_0,P_1) >2\varepsilon$ with $\varepsilon \in [0, 1/2)$, the test defined in \eqref{eq-private-mechanism-test} and \eqref{eq-test-optimal} satisfies that 
\[
    \sup_{P \in \mathcal{P}_\varepsilon(P_0)} \mathbb{E}_{P,Q}(\tilde{\phi}) + \sup_{P' \in \mathcal{P}_\varepsilon(P_1)} \mathbb{E}_{P',Q}(1-\tilde{\phi}) \leq 2\exp[-C\alpha^2n\{\mathrm{TV}(P_0,P_1)-2\varepsilon\}^2],
\]
where $C >0$ is some absolute constant.
\end{thm}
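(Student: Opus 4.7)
The plan is to argue that $\widetilde{N}_0/n$ is an unbiased estimator of $P(A)$, concentrate it around its mean via Hoeffding's inequality, and then combine the Scheffé property of $A$ with the contamination budget to show the test separates the two composite hypotheses whenever the concentration event holds.

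First, I would work out the distribution of the privatised bits. Since $Y_i = \mathbbm{1}\{X_i \in A^c\}$ is Bernoulli with $\mathbb{P}(Y_i = 0) = P(A)$, the randomised-response step gives independent Bernoulli $Z_i$'s with $\mathbb{P}(Z_i = 0) = \frac{e^\alpha}{e^\alpha+1}P(A) + \frac{1}{e^\alpha+1}(1-P(A)) = \frac{P(A)(e^\alpha-1)+1}{e^\alpha+1}$. Plugging this into the bias-corrected statistic $\widetilde{N}_0$ gives $\mathbb{E}[\widetilde{N}_0/n] = P(A)$, so that the debiasing scaling is precisely what is needed.

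Next I would concentrate $\widehat{N}_0/n$ around its mean via Hoeffding's inequality for i.i.d.\ $[0,1]$-bounded variables, then transfer to $\widetilde{N}_0/n$ using the scaling factor $(e^\alpha+1)/(e^\alpha-1)$. This yields
\[
    \mathbb{P}_{P,Q}\bigl(|\widetilde{N}_0/n - P(A)| \geq t\bigr) \leq 2\exp\left\{-2nt^2\left(\frac{e^\alpha-1}{e^\alpha+1}\right)^2\right\},
\]
and for $\alpha \in (0,1]$ one has $(e^\alpha-1)/(e^\alpha+1) \geq \alpha/(e+1)$, so the exponent is $\lesssim -n\alpha^2 t^2$ up to an absolute constant.

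The final step exploits the Scheffé property $P_0(A)-P_1(A) = \mathrm{TV}(P_0,P_1)$ together with the contamination bound. If $P = (1-\varepsilon)P_0 + \varepsilon G \in \mathcal{P}_\varepsilon(P_0)$, then $|P(A) - P_0(A)| \leq \varepsilon$, so $P(A) \geq P_0(A) - \varepsilon = \tfrac{1}{2}\{P_0(A)+P_1(A)\} + \tfrac{1}{2}\mathrm{TV}(P_0,P_1) - \varepsilon$. Under the event $\{|\widetilde{N}_0/n - P(A)| < (\mathrm{TV}(P_0,P_1)-2\varepsilon)/2\}$, which is non-trivial by the hypothesis $\mathrm{TV}(P_0,P_1) > 2\varepsilon$, this forces $2\widetilde{N}_0/n \geq P_0(A)+P_1(A)$, i.e.\ $\tilde\phi = 0$. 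The symmetric calculation on $\mathcal{P}_\varepsilon(P_1)$ forces $\tilde\phi = 1$. Plugging $t = (\mathrm{TV}(P_0,P_1)-2\varepsilon)/2$ into the concentration bound and summing the two error probabilities produces the claimed
\[
    \sup_{P \in \mathcal{P}_\varepsilon(P_0)} \mathbb{E}_{P,Q}(\tilde\phi) + \sup_{P' \in \mathcal{P}_\varepsilon(P_1)} \mathbb{E}_{P',Q}(1-\tilde\phi) \leq 2\exp\bigl[-C\alpha^2 n\{\mathrm{TV}(P_0,P_1)-2\varepsilon\}^2\bigr].
\]

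No step looks genuinely difficult; the one place requiring a little care is making sure the debiasing factor is the right one so that $\widetilde{N}_0/n$ is truly unbiased for $P(A)$ and the scaling cost for privacy materialises as the $\alpha^2$ factor in the exponent, and that the deterministic comparison at the end correctly uses the Scheffé identity together with $\varepsilon$-perturbation on both sides (this is where the factor $2\varepsilon$ rather than $\varepsilon/(1-\varepsilon)$ enters and matches the lower bound of \Cref{testing_lowerbound} up to constants).
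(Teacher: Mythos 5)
Your proposal is correct and follows essentially the same route as the paper's proof (which factors the argument through Lemma~\ref{testing_lemma}): establish unbiasedness of $\widetilde{N}_0/n$ for $P(A)$, apply Hoeffding to convert privacy cost into the $\alpha^2$ factor, and use the Scheff\'{e} identity with $|P(A)-P_0(A)|\leq\varepsilon$ to reduce to a deterministic comparison. The only cosmetic difference is that you invoke a two-sided Hoeffding bound where the paper uses a one-sided one for each error type; this changes the leading constant (and indeed the paper's own displayed chain has a harmless $4$ vs.\ the stated $2$), but with the absolute constant $C$ free in the exponent it is immaterial.
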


We first note that in \Cref{testing_upperbound}, we require $\varepsilon < 1/2$, which, as discussed above, is necessary for the existence of non-trivial tests.  Further, since $\varepsilon/(1-\varepsilon) \geq \varepsilon$, the lower bound in \Cref{testing_lowerbound} implies that 
\begin{align}
    \mathcal{R}_{n,\alpha}(\varepsilon) \geq \exp\left\{-16\alpha^2n  \{\mathrm{TV}(P_0,P_1)-\varepsilon\}_+^2\right\}. \label{eq-lb-constant-2}
\end{align} 
Comparing the upper bound in \Cref{testing_upperbound} and the lower bound in \eqref{eq-lb-constant-2}, up to constants, we see that the test $\tilde{\phi}$ is optimal in terms of the privacy constraint $\alpha$, the sample size $n$, the separation $\mathrm{TV}(P_0,P_1)$ and the contamination proportion $\varepsilon$.  

\begin{remark}[When $\varepsilon$ is known]
\label{Rmk:TighterTesting}
When $\varepsilon$ is known, a modification of the test procedure defined in \eqref{eq-private-mechanism-test} and \eqref{eq-test-optimal} achieves slightly better performance and shows that consistent testing is possible if and only if $\mathrm{TV}(P_0,P_1) > \varepsilon/(1-\varepsilon)$. With the same privacy mechanism as in \eqref{eq-private-mechanism-test}, consider
\[
    \phi' = \mathbbm{1} \left\{ 2\widetilde{N}_0/n < (1-\varepsilon)\{P_0(A) + P_1(A)\} + \varepsilon \right\},
\]
which uses the same test statistic but compares it to a different critical value. Similar calculations to those carried out for Theorem~\ref{testing_upperbound} show that
\[
    \sup_{P \in \mathcal{P}_\varepsilon(P_0)} \mathbb{E}_{P,Q}(\phi') + \sup_{P' \in \mathcal{P}_\varepsilon(P_1)} \mathbb{E}_{P',Q}(1-\phi') \leq 2\exp[-C'\alpha^2n\{\mathrm{TV}(P_0,P_1)-\varepsilon/(1-\varepsilon)\}_+^2]
\]
for some absolute constant $C' > 0$, provided $\alpha \in (0,1]$. We now see that, when $\mathrm{TV}(P_0,P_1) \leq \varepsilon/(1-\varepsilon)$ we have $\mathcal{R}_{n,\alpha}(\varepsilon)=1$, and when $\mathrm{TV}(P_0,P_1) > \varepsilon/(1-\varepsilon) + 1/\sqrt{n\alpha^2}$ we have
\[
    - \log \mathcal{R}_{n,\alpha}(\varepsilon) \asymp \alpha^2 n \{\mathrm{TV}(P_0,P_1) - \varepsilon/(1-\varepsilon)\}_+^2.
\]
Details of the calculations are given at the end of Section~\ref{appendix:testing}.
\end{remark}

\begin{remark}[Relation to \Cref{general_lower}]
The derived minimax error rate does not match the form given in Proposition~\ref{general_lower}. This form appears to be most suitable for estimation problems, while for testing problems we must look at the error slightly differently. Indeed, it is common in the theory of hypothesis testing to look at conditions under which errors are below given thresholds, often through minimal separation rates, rather than errors themselves \cite[e.g.][]{ingster2003nonparametric}. In problem~\eqref{testing_p} the total variation modulus of continuity is given by
\[
    \omega(\varepsilon) = \sup\bigl\{ \mathbbm{1}_{\{\theta(R_1) \neq \theta(R_0)\}} : \mathrm{TV}(R_0,R_1) \leq \varepsilon/(1-\varepsilon), R_0, R_1 \in \{P_0,P_1\} \bigr\} = \mathbbm{1}_{\{ \mathrm{TV}(P_0,P_1) \leq \varepsilon/(1-\varepsilon) \}},
\]
so we have $\omega(\varepsilon) \leq 0.1$ if and only if $\mathrm{TV}(P_0,P_1) > \varepsilon/(1-\varepsilon)$. Moreover, we have seen that, up to constants, $\mathcal{R}_{n,\alpha}(0) \leq 0.1$ if and only if $\mathrm{TV}(P_0,P_1)$ is larger than $(n \alpha^2)^{-1/2}$ and, on the other hand, $\mathcal{R}_{n,\alpha}(\varepsilon) \leq 0.1$ if and only if $\mathrm{TV}(P_0,P_1) \geq \varepsilon/(1-\varepsilon) + (n\alpha^2)^{-1/2}$. Thus $\mathrm{TV}(P_0,P_1)$ is seen to characterise when each of $\omega(\varepsilon),\mathcal{R}_{n,\alpha}(0),\mathcal{R}_{n,\alpha}(\varepsilon)$ is below the (arbitrary) threshold $0.1$. The level that $\mathrm{TV}(P_0,P_1)$ must exceed for $\mathcal{R}_{n,\alpha}(\varepsilon) \leq 0.1$ is given by the \emph{sum} of the levels required for $\omega(\varepsilon) \leq 0.1$ and $\mathcal{R}_{n,\alpha}(0) \leq 0.1$, and we see a decoupling of the contamination and the privacy.
\end{remark}

\subsection{Discussion}\label{sec-disc-test}

Our results in Proposition \ref{testing_lowerbound} and \Cref{testing_upperbound} share similar spirits as Theorem 5.7 in \cite{joseph2019role}, which stated a minimax lower bound result in terms of the sample complexity and presented an algorithm using a Laplace mechanism that achieves the optimal sample complexity. In particular, they considered a compound hypothesis testing problem where $\mathrm{H}_0$ and $\mathrm{H}_1$ correspond to convex and compact sets of discrete distributions well-separated in terms of total variation distance, whereas our result concerns the Huber contamination model, with different proof schemes.

An extension of the two-point testing problem defined in \eqref{testing_p} is the hypothesis selection, which is popular in both computer science and statistics literature \citep[e.g.][]{gopi2020locally, bun2019private, yatracos1985rates, devroye2001combinatorial, chen2016general}.  To be specific, for a fixed but unknown distribution $P \in \mathcal{P}$, given a set of $k_0 \in \mathbb{N}_+$ distributions $\mathcal{Q} = \{q_1,\dotsc,q_{k_0}\} \subset \mathcal{G}$, one seeks an element in $\mathcal{Q}$ that is closest to $P$ in the total variation distance. In particular, taking $\mathcal{Q}$ to be a $\delta$-covering set ($\delta > 0$) of $\mathcal{P}$, with $k_0$ being the $\delta$-covering number, this hypothesis selection problem is similar to an estimation problem of the distribution $P$.  Based on this setup, \cite{chen2016general} showed that applying a tournament procedure with non-private counterparts of $\tilde{\phi}$, to a $\delta$-covering set of $\mathcal{P}$, is minimax rate-optimal for estimating $P \in \mathcal{P}_\varepsilon(\mathcal{P})$ in terms of the total variation metric.  Their procedure returns an element $\widehat{P} \in \mathcal{Q}$, with high probability, satisfying that 
\[
\mathrm{TV}(\widehat{P},P) \lesssim \{\sqrt{\log(k_0)/n +\delta^2}\}\vee \varepsilon.
\]
With $\delta = \inf\{\delta_1: \, n \geq \log(k_0)/\delta_1^2\}$, it holds that $\mathrm{TV}(\widehat{P}, P) \lesssim \delta \vee \varepsilon$, and $n \geq \log(k_0)/\delta^2$ is called the sample complexity of the procedure. Based on the same setup, \cite{bun2019private} considered the problem under the central privacy model but without contamination, i.e.\ $P \in \mathcal{P}$, and they developed an algorithm that guarantees $\mathrm{TV}(\widehat{P}, P) \lesssim \delta$ with the sample complexity $n \gtrsim \log(k_0)/\delta^2+\log(k_0)/(\delta \alpha)$.  Note that in both these two problems, $k_0$ appears in the sample complexity through its logarithm $\log(k_0)$. 

However, under local privacy constraints, \cite{gopi2020locally} established a lower bound \citep[cf.\ Theorem 2 in][]{gopi2020locally} on the sample complexity of $n \gtrsim k_0/(\delta^2\alpha^2)$, which shows that the cost of this general estimation method induced by a $\delta$-covering set is exponentially higher in the local privacy setting compared to the central privacy and non-private settings. The exponential gap in the sample complexity directly leads to a suboptimal rate for estimation tasks.  In \Cref{ex-1}, we state an example illustrating the suboptimality of hypothesis selection approaches to estimation problems, even in simple parametric problems, which motivates our proposals of \emph{problem-specific} estimation procedures studied in the rest of this paper.

\begin{exmp}\label{ex-1}
Consider $\{X_i\}_{i = 1}^n$ to be i.i.d.~random variables from $\mathcal{N}(\mu, 1)$, $\mu \in [-D, D]$.  Estimating $\mu$ is a special case of the robust mean estimation problem in \Cref{sec2}. For this problem, we have $k_0 \asymp D/\delta$.  Theorem 2 in \cite{gopi2020locally} implies that the sample complexity of the associated hypothesis selection problem is
\[
n \gtrsim \frac{k_0}{\delta^2\alpha^2} \asymp \frac{D/\delta}{\delta^2\alpha^2} = \frac{D}{\delta^3 \alpha^2}, \quad \mbox{i.e. } \delta \gtrsim \left(\frac{D}{n\alpha^2}\right)^{1/3}.
\]
Since $\mathrm{TV}(\mathcal{N}(\mu,1),\mathcal{N}(\mu',1)) \asymp |\mu - \mu'|$ \citep{devroye2018total}, this implies that any estimator $\hat{\mu}$ obtained based on the $\delta$-covering set would have convergence rate measured by $|\hat{\mu}-\mu|$ bounded below by $\{D/(n\alpha^2)\}^{1/3}$. However, since Gaussian distributions have moments of all orders, we will see that we can attain a near-parametric upper bound by applying \Cref{mean_optimalthm} below with arbitrarily large $k$.
\end{exmp}

As for the questions Q1 and Q2 raised in \Cref{sec-contributions}, in this two-point testing problem, we see that
\begin{itemize}
    \item there exists a procedure optimal against contamination \citep[Section 2 in][]{chen2016general} that can be properly privatised to achieve optimal performance; and 
    \item there exists an $\alpha$-LDP procedure \citep{joseph2019role} that is automatically robust and minimax rate-optimal. 
\end{itemize}

\section{Robust mean estimation under local differential privacy}\label{sec2}

Recalling the general setup in \Cref{sec-general-setup}, in this section we consider distributions supported on the real line, i.e.~$\mathcal{X} = \mathbb{R}$, with finite $k$th ($k > 1$) central moments and possibly diverging expectation, as the sample size grows unbounded.  We are interested in estimating the expectation, i.e.~$\theta(P) = \mathbb{E}_{X \sim P}(X)$.  To be specific, we let 
\begin{equation}\label{eq-P-k-dist-def}
    \mathcal{P} = \mathcal{P}_k = \left\{P: \, \mu = \mathbb{E}_{X \sim P}(X) \in [-D,D], \, \sigma^k = \mathbb{E}_{X \sim P}[|X-\mu|^k] \leq 1 \right\},
\end{equation}
where $k$ is considered fixed but arbitrary and $D \geq 1$ may be a function of the sample size.  

We again assume the data are generated from Huber's contamination model \eqref{hubermodel}.  Let $\{X_i\}_{i = 1}^n$ be i.i.d.~random variables with distribution $P_{k, \varepsilon} \in \mathcal{P}_\varepsilon(\mathcal{P}_k)$ and suppose that we are interested in estimating the expectation of the inlier distribution $\mu$.

The $\alpha$-LDP minimax risk defined in \eqref{intro_minimax} takes its specific form in the robust mean estimation problem as follows
\begin{equation}\label{eq-alpha-ldp-def-mean}
    \mathcal{R}_{n,\alpha}(\varepsilon) = \mathcal{R}_{n,\alpha}(\theta(\mathcal{P}_k), (\cdot)^2, \varepsilon) =  \inf_{Q \in \mathcal{Q}_\alpha} \inf_{\hat{\mu}} \sup_{P_{k, \varepsilon} \in \mathcal{P}_\varepsilon(\mathcal{P}_k)}\mathbb{E}_{P_{k, \varepsilon}, Q}\{(\hat{\mu}-\mu)^2\},
\end{equation}
where the metric of interest is the squared loss and the infimum over $\hat{\mu}$ is taken over all measurable functions of the privatised $\{X_i\}_{i = 1}^n$ via some privacy mechnism $Q \in \mathcal{Q}_\alpha$. 

\subsection{Lower bound}

We first provide in Proposition \ref{mean_lowerbound} a lower bound that comprises three terms. If $\log(D) \geq 32n\alpha^2$, then $\mathcal{R}_{n,\alpha}(\varepsilon) \gtrsim 1$.  This implies that consistent estimation of $\mu$ is impossible. When $\log(D) \ll n\alpha^2$, the lower bound simplifies to $(n\alpha^2)^{1/k-1} \vee \varepsilon^{2-2/k}$, which is the minimax rate of the robust mean estimation problem with fixed $D$. We show in Theorem \ref{mean_optimalthm} that a matching upper bound can be achieved by a novel non-interactive procedure under minimal conditions. 

\begin{prop}\label{mean_lowerbound}
Let $\{X_i\}_{i = 1}^n$ be i.i.d.~random variables from $P_{k, \varepsilon} \in \mathcal{P}_\varepsilon(\mathcal{P}_k)$, with $\mathcal{P}_k$ defined in~\eqref{eq-P-k-dist-def}.  For $\alpha \in (0,1]$ and $n \geq n_0$ with a large enough absolute constant $n_0 \in \mathbb{N}_+$, it holds that the $\alpha$-LDP minimax estimation risk defined in \eqref{eq-alpha-ldp-def-mean} satisfies
\[
    \mathcal{R}_{n,\alpha}(\varepsilon) \gtrsim (n\alpha^2)^{1/k-1} \vee \varepsilon^{2-2/k} \vee \frac{D^2}{\exp{(64n\alpha^2)}}.
\]
\end{prop}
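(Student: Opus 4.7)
The plan is to establish the three terms in the lower bound separately and then take the maximum, following the disentanglement already suggested by Proposition~\ref{general_lower}. Throughout I would use Le Cam's two-point method, combined in two of the three cases with the Duchi--Jordan--Wainwright (DJW) contraction bound that, for any $\alpha$-LDP channel with $\alpha \in (0,1]$, gives $\mathrm{KL}(M_0^{(n)},M_1^{(n)}) \lesssim n\alpha^2\,\mathrm{TV}(P_0,P_1)^2$, where $M_i^{(n)}$ denotes the joint distribution of the privatised sample under $P_i$.

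For the privacy-only term $(n\alpha^2)^{1/k-1}$, I would take the two-point family $P_0 = \delta_0$ and $P_1 = (1-p)\delta_0 + p\delta_M$ with $p \asymp (n\alpha^2)^{-1/2}$ and $M = p^{-1/k}$. A direct calculation shows that both distributions lie in $\mathcal{P}_k$ (the $k$th central moment is of order $pM^k \asymp 1$), the mean separation is $\Delta = pM = p^{1-1/k}$, and $\mathrm{TV}(P_0,P_1) = p$. The DJW bound gives $\mathrm{KL}(M_0^{(n)},M_1^{(n)}) \lesssim n\alpha^2 p^2 \lesssim 1$, so Pinsker and Le Cam yield a lower bound of order $\Delta^2 \asymp (n\alpha^2)^{1/k-1}$. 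For the contamination-only term $\varepsilon^{2-2/k}$, I would invoke Proposition~\ref{general_lower}, which reduces the task to lower bounding $\omega(\varepsilon)$. Using a mirror construction inside the TV ball, namely $R_0 = \delta_0$ and $R_1 = (1-\tilde\varepsilon)\delta_0 + \tilde\varepsilon\,\delta_{M'}$ with $\tilde\varepsilon = \varepsilon/(1-\varepsilon)$ and $M' \asymp \tilde\varepsilon^{-1/k}$, I would read off $\omega(\varepsilon) \gtrsim \varepsilon^{1-1/k}$, hence the claimed $\varepsilon^{2-2/k}$ contribution.

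The third term $D^2/\exp(64n\alpha^2)$ is the main obstacle and requires a different device. The natural two-point choice is $P_0 = \delta_{-D}$ and $P_1 = \delta_{D}$, both trivially in $\mathcal{P}_k$ with mean separation $2D$; however $\mathrm{TV}(P_0,P_1)=1$, so Pinsker applied to $\mathrm{KL} \lesssim n\alpha^2$ delivers nothing useful once $n\alpha^2$ is large. The key idea is to replace Pinsker by the Bretagnolle--Huber inequality $1 - \mathrm{TV}(P,Q) \geq \tfrac{1}{2}\exp\{-\mathrm{KL}(P,Q)\}$, which retains exponential slack exactly in the regime of interest. Combined with the DJW bound $\mathrm{KL}(M_0^{(n)},M_1^{(n)}) \lesssim n\alpha^2$, this yields $1 - \mathrm{TV}(M_0^{(n)},M_1^{(n)}) \gtrsim \exp(-Cn\alpha^2)$, and Le Cam's two-point bound with separation $2D$ then produces the desired $D^2\exp(-Cn\alpha^2)$ contribution. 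Matching the specific constant $64$ in the exponent reduces to a careful tracking of the $(e^\alpha-1)^2 \leq C\alpha^2$ factor in the DJW bound for $\alpha \in (0,1]$, plus the factor of $2$ inside Bretagnolle--Huber; none of this is delicate, but it is the only part of the proof where the standard Pinsker-plus-Le-Cam template breaks down and a separate inequality must be invoked.
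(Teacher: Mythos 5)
Your proposal is correct and recovers all three terms. For the first two terms it tracks the paper's logic closely: the contamination term via Proposition~\ref{general_lower} and a mirror two-point construction exactly as the paper does, and the privacy term via a two-point family essentially equivalent to the paper's $R_0, R_1$ (the paper routes this through Corollary~3.1 of Rohde--Steinberger, which packages the same Le Cam $+$ Duchi--Jordan--Wainwright argument in terms of the modulus of continuity $\omega'$, while you unpack it directly; a small caveat is that with $M = p^{-1/k}$ the $k$th moment is approximately but not strictly $\leq 1$, so one should take $M = (cp)^{-1/k}$ for a constant $c>1$ as the paper does with $(2\eta)^{-1/k}$).

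For the third term your route is genuinely different. The paper proves the $D^2/\exp(64 n\alpha^2)$ bound in Lemma~\ref{tomnotes} via a Fano argument: a $\delta$-packing of $[-D,D]$, the private mutual-information bound $I(Z_1^n;J) \leq 4 n (e^\alpha-1)^2$, and then inverting the packing number. You instead use the two-point family $\delta_{-D}, \delta_D$ and replace Pinsker with Bretagnolle--Huber, obtaining $1-\mathrm{TV}(QM_0^n,QM_1^n) \geq \tfrac12 \exp\{-\mathrm{KL}(QM_0^n,QM_1^n)\} \geq \tfrac12 \exp(-16 n \alpha^2)$ and then applying Le Cam with separation $2D$. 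Your argument is cleaner (no packing or Fano needed) and actually delivers the strictly stronger constant $D^2/(4\exp(16 n \alpha^2))$, which trivially implies the stated $\gtrsim D^2/\exp(64 n\alpha^2)$. Your diagnosis that the usual Pinsker-based private Le Cam (Proposition~1 of Duchi--Jordan--Wainwright) is uninformative here because it gives a negative bound once $\sqrt{4n\alpha^2}>1$ is exactly right, and Bretagnolle--Huber is the natural replacement. The Fano route the paper takes would extend more naturally to multivariate or higher-multiplicity packing arguments, but in this one-dimensional two-point setting your approach is more economical.
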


\Cref{mean_lowerbound} explains the hardness of this estimation problem by isolating the cost of preserving privacy, contamination and estimating a possibly diverging mean respectively.  If any of these three becomes too hard, this estimation problem becomes infeasible.  In terms of the preservable level of privacy, we require $\alpha \gg n^{-1/2}$ for consistency.  In terms of the Huber contamination proportion, we require $\varepsilon \ll 1$.  In terms of $D$, the upper bound on the absolute mean, we require $D \ll \exp(32n \alpha^2)$.  

The dependence on $D$ is a rather interesting finding. When $k=2$ and $D=\infty$, it is shown in Appendix G of \cite{duchi2013local} that $\mathcal{R}_{n,\alpha}(0) = \infty$. To interpolate between the case of an unbounded parameter space and the case where the mean takes values in a fixed compact set, we prove in \Cref{tomnotes} that, for any $D \in [0,\infty)$ and $n \in \mathbb{N}_+$, the lower bound
\[
\mathcal{R}_{n,\alpha}(0) \geq \frac{D^2}{32\exp{(64n\alpha^2)}}
\]
holds.

Another interesting aspect roots in the derivation of the term $(n\alpha^2)^{1/k-1}$, which unveils a somewhat deeper connection between locally private estimation and estimation within Huber's contamination model. To derive the term $(n\alpha^2)^{1/k-1}$, we apply Corollary 3.1 in \cite{rohde2020geometrizing}, which crucially depends on the following quantity 
\[
    \omega'(\eta) = \omega(\eta/(1+\eta)) = \sup\{|\mu_0-\mu_1|: \mathrm{TV}(R_0,R_1) \leq \eta, \, R_0,R_1 \in \mathcal{P}_k\}, 
\]
where $\mu_0$ and $\mu_1$ denote the means of $R_0$ and $R_1$ respectively.  It is almost identical to $\omega(\cdot)$ defined in \eqref{eq-TV-modulus-conti}, up to a change of variable, and has been shown to play a central role in establishing minimax rates for locally private estimation problems \citep{rohde2020geometrizing}. Therefore, we see that the total variation modulus of continuity $\omega(\cdot)$, a single unifying quantity, can quantify the costs of both privacy and contamination in the Huber's contamination model.

\subsection{Upper bound}

We are now to show that the lower bound in \Cref{mean_lowerbound} is indeed tight by providing a novel non-interactive estimator that is adaptive to $D$. We split the data into four folds, each of which is privatised separately. The procedure has two main steps.
\begin{itemize}
    \item Firstly, for a pre-specified $M > 0$, the first fold is used to construct a private histogram with bin width $M/3$ and to identify bins containing a proportion of the contaminated distribution exceeding a threshold. This allows us to find a larger bin of width $M$ of the form $[\{S+(L-1)/3\}M, \{S+1+(L-1)/3\}M]$, with $S \in \mathbb{N}_+$ and $L \in \{0,1,2\}$, such that only a negligible proportion of the distribution lies outside this interval. This interval can be thought of as a crude, initial estimate of the location of the distribution.
    \item Secondly, for each $\ell \in \{0,1,2\}$ and each data point $X_i$ in fold $\ell+2$, we divide $X_i - (\ell-1)M/3$ by $M$ and use a Laplace mechanism to privatise the remainder. The privatised data from fold $L+2$ can then be used to pinpoint the mean within the interval $[\{S+(L-1)/3\}M, \{S+1+(L-1)/3\}M]$, by adding the privatised remainders to its left-hand end point.
\end{itemize}
Crucially, the value of $L$ is not needed to privatise the data from the final folds and is only used when constructing the final estimator. This can be done because there are only three possible values of $L$ and we may reserve a fold for each eventuality. The other two folds are discarded.

We now formalise the procedure described above. 

\begin{itemize}
    \item [Step 1.] (Privatisation.)  For some absolute constant $c > 0$, let $T = \exp(c n \alpha^2)$.  Let $M$ be a tuning parameter such that $T/M \in \mathbb{N}$.  Let
    \[
        A_j = [(j-1)M/3, jM/3), \quad j \in \mathcal{J} = \{-3T/M, -3T/M+1, \ldots, 3T/M, 3T/M+1\}.
    \]
    Let data be $\{X_i\}_{i = 1}^{4n}$.  Generate independent standard Laplace variables $(W_{ij})_{i \in [n], j \in \mathcal{J}}$, $(W_i^{(\ell)})_{i=(\ell+1) n +1}^{(\ell+2) n}$ for $\ell=0,1,2$.  For $i \in [n]$ and $j \in \mathcal{J}$ set
    \begin{equation}\label{eq-priv-fold1}
        Z_{ij} = \mathbbm{1}_{\{X_i \in A_j\}} + \frac{2}{\alpha} W_{ij}.
    \end{equation}
    For $\ell=0,1,2$ and $i=(\ell+1) n +1,\ldots, (\ell+2)n$ set
    \begin{align*}
        R_i^{(\ell)} = \min \bigl\{ X_i - (j-1)M/3 : j \in \mathcal{J}, j \equiv \ell \, (\text{mod} 3), X_i \geq (j-1)M/3 \bigr\}
    \end{align*}
    and
    \begin{equation}\label{eq-priv-foldl}
        Z_i^{(\ell)} = [R_i^{(\ell)}]_0^M + \frac{M}{\alpha} W_i^{(\ell)},
    \end{equation}
    where $[\cdot]_{0}^{M} = \min\{\max\{\cdot, 0\}, \, M\}$.
    
    \item [Step 2.] (Estimator construction.) With 
    \[
        \delta=T^{-2}(n\alpha^2)^{-1} \quad \mbox{and} \quad \tau=\epsilon + (1-\epsilon)(6/M)^k + 4\sqrt{2 \log(12T/(M\delta))/(n\alpha^2)},
    \]
    define
    \[
        \widehat{\mathcal{J}} = \biggl\{ j \in \mathcal{J} : \frac{1}{n} \sum_{i=1}^n Z_{ij} \geq \tau \biggr\}. 
    \]
    If $\widehat{\mathcal{J}} = \emptyset$ then output $\hat{\mu}=0$. Otherwise, let
    \[
        J=\max \widehat{\mathcal{J}} -1  \quad \mbox{and} \quad L = \begin{cases}
            0, & \text{if } J \equiv 0 \, (\text{mod} 3), \\
            1, & \text{if } J \equiv 1 \, (\text{mod} 3), \\
            2, & \text{if } J \equiv 2 \, (\text{mod} 3). 
        \end{cases}
    \]
    The estimator is defined as
    \begin{equation}\label{eq-hat-mu-NI-2}
        \hat{\mu} = \frac{1}{n} \sum_{i=(L+1)n+1}^{(L+2)n} Z_i^{(L)} + (J-1)M/3.
    \end{equation}
\end{itemize}

The privatisation in \eqref{eq-priv-fold1} and \eqref{eq-priv-foldl} is non-interactive and satisfies $\alpha$-LDP, as shown in \Cref{lemma-mean-alpha-ldp}.  The property of the estimator $\hat{\mu}$ defined in \eqref{eq-hat-mu-NI-2} is collected in \Cref{mean_optimalthm} below.

\begin{thm}\label{mean_optimalthm}
Given i.i.d.~random variables $\{X_i\}_{i = 1}^{4n}$ with distribution $P_{k,\varepsilon} = (1-\varepsilon)P_k+\varepsilon G$, where $P_k \in \mathcal{P}_k$ defined in \eqref{eq-P-k-dist-def}, and $G$ is any arbitrary distribution supported on~$\mathbb{R}$.  The estimator $\hat{\mu}$ defined in \eqref{eq-hat-mu-NI-2}, with inputs satisfying that (i) $T \geq D$, (ii) $(n\alpha^2)^{-1} \log(12T^3 n \alpha^2/M) \leq \min(\alpha^{-2}/2, 1/512)$ and (iii) $\varepsilon+(1-\varepsilon)(6/M)^k \leq 1/12$, satisfies that 
\[
    \mathbb{E}\{ (\hat{\mu} - \mu)^2 \} \lesssim T^2 \exp(-n \alpha^2/512) + \frac{M^2}{n \alpha^2} + \varepsilon^2 M^2 + M^{-2(k-1)}.
\]
In particular, choosing $T=\exp(n\alpha^2/3072)$ and $M \asymp \varepsilon^{-1/k} \wedge (n \alpha^2)^{1/(2k)}$, when $\alpha \leq 1$, $D\leq T$ and $\min\{\varepsilon,(n\alpha^2)^{-1}\} \leq c_0$ for some small constant $c_0 > 0$, we have that
\[
    \mathbb{E}\{ (\hat{\mu} - \mu)^2 \} \lesssim (n \alpha^2)^{1/k-1} \vee \varepsilon^{2-2/k}.
\]
\end{thm}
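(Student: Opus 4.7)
The plan is to first verify the $\alpha$-LDP property of the mechanism, which follows from \Cref{lemma-mean-alpha-ldp}, the standard properties of the Laplace mechanism, and the independence across the four folds. The bulk of the work is then to split the squared-error analysis into a bin-selection contribution and a residual-averaging contribution on a suitable good event, before finally trading off $M$.

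For the bin-selection stage, I would first establish a good event $\mathcal{E}$ on which the index $J$ identifies a window of width $M$ around $\mu$. The key observation is that $n^{-1}\sum_i Z_{ij}$ has mean $P_{k,\varepsilon}(A_j)$ with sub-exponential noise of order $1/\alpha$; a standard sub-exponential tail inequality plus a union bound over $|\mathcal{J}| \lesssim T/M$ bins would give, with probability at least $1-\delta$,
\[
    \max_{j \in \mathcal{J}} \bigl| n^{-1} \textstyle\sum_i Z_{ij} - P_{k,\varepsilon}(A_j) \bigr| \lesssim \sqrt{\log(T/(M\delta))/(n\alpha^2)},
\]
which is precisely the buffer built into $\tau$. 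Combined with $P_{k,\varepsilon}(A_j) \leq \varepsilon + (1-\varepsilon)P_k(A_j)$ and the Markov bound $P_k(|X-\mu| \geq M/6) \leq (6/M)^k$, this forces only bins within $O(M)$ of $\mu$ to enter $\widehat{\mathcal{J}}$. Setting $J = \max \widehat{\mathcal{J}} - 1$ would then trap $\mu$ inside the window $W_J := [(J-1)M/3, (J-1)M/3 + M]$, and by construction $J \equiv L \pmod{3}$, so $W_J$ coincides with the $M$-spaced grid used to compute the residuals in fold $L+2$.

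On $\mathcal{E}$ and for $X_i \in W_J$, the residual satisfies $R_i^{(L)} = X_i - (J-1)M/3 \in [0,M]$, so the clip $[\cdot]_0^M$ is inactive for the bulk of inlier data. Decomposing
\[
    \hat\mu - \mu = \Bigl\{ n^{-1}\textstyle\sum_i Z_i^{(L)} - \mathbb{E}\bigl[[R_i^{(L)}]_0^M\bigr] \Bigr\} + \Bigl\{ \mathbb{E}\bigl[[R_i^{(L)}]_0^M\bigr] + (J-1)M/3 - \mu \Bigr\},
\]
I would bound the first (stochastic) bracket in variance by $M^2/(n\alpha^2)$ using the Laplace scale $M/\alpha$, and split the bias bracket over inliers and outliers: contamination contributes at most $\varepsilon M$ (each clipped residual plus shift is bounded by $M$ in the relevant range), while the inlier bias vanishes on $W_J$ and, for $X \notin W_J$, is controlled by $P_k(X \notin W_J) \lesssim (6/M)^k$ together with the $k$-th moment condition to yield a total bias of order $\varepsilon M + M^{-(k-1)}$. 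Off $\mathcal{E}$, $|\hat\mu - \mu|$ is bounded trivially by $O(T)$ up to a Laplace tail, and $\mathbb{P}(\mathcal{E}^c) \lesssim \delta = T^{-2}(n\alpha^2)^{-1}$ contributes at most $T^2 e^{-cn\alpha^2}$. Combining these yields the oracle bound
\[
    \mathbb{E}[(\hat\mu - \mu)^2] \lesssim T^2 e^{-n\alpha^2/512} + M^2/(n\alpha^2) + \varepsilon^2 M^2 + M^{-2(k-1)},
\]
and, with $T = \exp(n\alpha^2/3072)$, the first term is absorbed; balancing $M^2/(n\alpha^2) \vee \varepsilon^2 M^2$ against $M^{-2(k-1)}$ via the stated choice of $M$ yields the rate $(n\alpha^2)^{1/k-1} \vee \varepsilon^{2-2/k}$.

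The hardest part will be the bin-selection argument: verifying that the one-sided rule $J = \max \widehat{\mathcal{J}} - 1$ always places $\mu$ inside $W_J$ on the good event, and that adversarial contamination cannot pull the cluster of surviving bins away from $\mu$. This hinges on calibrating $\tau$ just above the worst-case mass a bin far from $\mu$ can accumulate (inlier tail plus contamination plus privacy-noise) and on a Markov argument requiring $M$ to substantially exceed the $O(1)$ spread of $P_k$; the near-exponential choice of $T$ is precisely what is needed for the union bound over $|\mathcal{J}| \asymp T/M$ bins to remain benign while accommodating a potentially diverging $D$.
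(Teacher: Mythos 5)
Your proposal is correct and follows the same structure as the paper's proof: define a set of significant bins $\mathcal{J}_0 = \{j : P_{k,\varepsilon}(A_j) > \varepsilon + (1-\varepsilon)(6/M)^k\}$, show $|\mathcal{J}_0|\in\{1,2\}$ by Chebyshev, establish via sub-exponential concentration and a union bound over $\lesssim T/M$ bins that $\emptyset \neq \widehat{\mathcal{J}} \subseteq \mathcal{J}_0$ with high probability, verify that for $j_0 \in \mathcal{J}_0$ we have $\mu \in [(j_0-3/2)M/3, (j_0+1/2)M/3)$ so that $\mu$ sits inside the width-$M$ window $W_J$, and then run a bias--variance analysis of the second-stage average. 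The one place you diverge from the paper is the final MSE decomposition: the paper uses the law of total variance, $\mathbb{E}\{(\hat\mu-\mu)^2\} = (\mathbb{E}\hat\mu-\mu)^2 + \mathrm{Var}(\mathbb{E}\{\hat\mu\mid J\}) + \mathbb{E}\{\mathrm{Var}(\hat\mu\mid J)\}$, and the middle term forces a separate case analysis on $|\mathcal{J}_0| = 1$ versus $|\mathcal{J}_0| = 2$ to control the variation of $\mathbb{E}\{\hat\mu\mid J\}$ across the two plausible bins. You instead bound $\mathbb{E}\{\mathbb{E}[(\hat\mu-\mu)^2\mid J]\}$ by partitioning on the good event $\{\widehat{\mathcal{J}}\neq\emptyset, \widehat{\mathcal{J}}\subseteq\mathcal{J}_0\}$ and using a uniform conditional-bias bound over $j_0\in\mathcal{J}_0$; this is an equivalent identity but sidesteps the case split, and is arguably cleaner. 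Both routes give the same oracle bound $T^2 e^{-n\alpha^2/512} + M^2/(n\alpha^2) + \varepsilon^2 M^2 + M^{-2(k-1)}$ and the same final rate after optimizing $M$.

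Two small points of imprecision to tidy up in a full write-up. First, the bad-event contribution actually comprises two pieces: $\mathbb{P}(\widehat{\mathcal{J}}\not\subseteq\mathcal{J}_0)\lesssim\delta$ contributes $T^2\delta \asymp (n\alpha^2)^{-1}$ (absorbed by $M^2/(n\alpha^2)$ since condition (iii) forces $M>6$), while $\mathbb{P}(\widehat{\mathcal{J}}=\emptyset)\lesssim e^{-n\alpha^2/512}$ is what gives the $T^2 e^{-n\alpha^2/512}$ term; your sketch conflates these slightly. Second, you should spell out why the modular residual $R_i^{(L)}$ (defined as a minimum over a grid) equals $X_i - (J-1)M/3$ for inliers in $W_J$: the grid points with residue $L$ are spaced exactly $M$ apart, so the nearest grid point below any $X_i \in W_J = [(J-1)M/3, (J-1)M/3+M)$ is $(J-1)M/3$ itself, and thus the clip $[\cdot]_0^M$ is genuinely inactive there. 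With these fleshed out, the argument matches the paper's.
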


With the properly chosen truncation tuning parameter $M$, \Cref{mean_optimalthm} shows that $\hat{\mu}$ gives an upper bound on the mean squared error of $(n\alpha^2)^{1/k-1} \vee \varepsilon^{2-2/k}$. Comparing with \Cref{mean_lowerbound}, we see that $\hat{\mu}$ is minimax rate-optimal in terms of the dependence on $n$, $\alpha$ and $\varepsilon$.  It is notable that it remains rate-optimal even for a diverging $D$, provided that it is not growing faster than a certain exponential rate in $n\alpha^2$ (more precisely, we require $D \leq T = \exp(n\alpha^2/3072)$).  In fact, this requirement is essentially unavoidable, as \Cref{mean_lowerbound} shows that consistent estimation is impossible if $\log D \gg n \alpha^2$. Lastly, \Cref{mean_optimalthm} shows that the error of our estimator has an upper bound independent of $D$, provided $D \leq T$, which is in contrast to the performance of the private mean estimator based on the standard Laplace mechanism studied in \cite{duchi2018minimax} (see \Cref{meanupper1} in \Cref{sec-duchi-mean}).

\subsection{Discussions}\label{sec-disc-mean}

Our focus is on cases where both contamination and privacy constraints are present.  In this section, we discuss some related results in the literature that only consider either contamination or the local privacy constraint. 

Without the local privacy constraint, \cite{prasad2020robust} proposed a two-step robust mean estimator (cf.\ Algorithm 2 therein) that achieves the optimality in the model \eqref{eq-P-k-dist-def} with $k \geq 2$ and $D = \infty$. With probability at least $1-\delta$, their estimator $\hat{\mu}_{\mathrm{Pra}}$ satisfies that 
\begin{equation}\label{eq-nonprivacy-mean-rate}
    |\hat{\mu}_{\mathrm{Pra}} - \mu|^2 \lesssim n^{-1}\log(1/\delta) \vee \varepsilon^{2-2/k},
\end{equation}
which is known to be information-theoretically optimal \citep[e.g.][]{diakonikolas2019robust}.  Recalling that our results are in the form of expectations, to compare \eqref{eq-nonprivacy-mean-rate} with \Cref{mean_optimalthm}, we hence ignore the logarithmic term.  Regarding the term involving $\varepsilon$, we see that in \Cref{mean_optimalthm}, it is completely isolated from the privacy level $\alpha$; and in both \eqref{eq-nonprivacy-mean-rate} and \Cref{mean_optimalthm}, it is of the order $\varepsilon^{2-2/k}$. As for preserving privacy at level $\alpha$, we see the effects are two-fold (see also Section~3.2.1 in \citealt{duchi2013local}): (1) a reduction of the effective sample size from $n$ to $n\alpha^2$; and (2) instead of $n^{-1}$ in the non-private case, we have in \Cref{mean_optimalthm} the term $n^{1/k-1}$, i.e.~the heavy-tailedness of $P\in \mathcal{P}_k$ has no effect on the non-private convergence rate in \eqref{eq-nonprivacy-mean-rate}, whereas a loss of $1/k$ in the exponent is incurred in the private setting. Given that we have shown in \Cref{mean_lowerbound}, the rate we achieved in \Cref{mean_optimalthm} is optimal, the loss in the rate $n^{1/k}$ is unavoidable due to the privacy constraint -- similar phenomena have also been observed in the nonparametric estimation literature \citep[e.g.][]{berrett2019classification, berrett2021changepoint} as well as in \Cref{sec3}. 

In view of Q1 in \Cref{sec-contributions}, we claim that the estimator $\hat{\mu}_{\mathrm{Pra}}$ is similar to our estimator $\hat{\mu}$ in a broad sense. As detailed in Algorithm~2 in \cite{prasad2020robust}, $\hat{\mu}_{\mathrm{Pra}}$ first constructed a shortest interval initial estimator using half of the data, which is similar to finding $J$ using the first $n$ samples in our construction serving as a crude estimate. The remaining data are then used to refine this crude estimate. One may, therefore, see that our estimator $\hat{\mu}$ as a non-interactive and private version of an optimal robust mean estimator.

In view of Q2 in \Cref{sec-contributions}, \cite{duchi2018minimax} studied the problem \eqref{eq-P-k-dist-def} with $D = 1$ and $\varepsilon = 0$ (cf.~Section 3.2.1 therein). They estimated the mean by averaging the privatised data obtained by adding Laplace noise to the truncated data. This mechanism guarantees the $\alpha$-LDP constraint, but in the case when $D$ is large and $\varepsilon>0$, it is sub-optimal.  See \Cref{meanupper1}.  

To summarise, in this robust mean estimation problem, we see that 
\begin{itemize}
    \item there exists a procedure \citep{prasad2020robust} optimal against contamination that can be properly privatised to achieve optimal performance; and 
    \item a standard $\alpha$-LDP procedure is not automatically robust and minimax rate-optimal when the parameter space grows (cf.~\Cref{meanupper1}). 
\end{itemize}

\section{Robust density estimation under local differential privacy}\label{sec3}

Recalling the general setup in \Cref{sec-general-setup}, in this section we consider distributions supported on $\mathcal{X} = [0, 1]$, belonging to the Sobolev class $\mathcal{P} = \mathcal{F}_{\beta}$, defined below.

\begin{defn}[Sobolev class] \label{sobolev}
Let $\beta > 1/2$ and $\{\gamma_t\}_{t\in \mathcal{T}}$ be an orthonormal basis of $L_2[0, 1]$ indexed by a countable family $\mathcal{T}$.  For a given coefficient sequence $\{a_t\}_{t \in \mathcal{T}}$ associated with $\{\gamma_t\}_{t\in \mathcal{T}}$, the Sobolev class $\mathcal{F}_{\beta}$ is defined as
\begin{align} \label{generalelip}
    &\mathcal{F}_{\beta} =\bigg\{f:\, [0, 1] \rightarrow \mathbb{R}_{+}\; \Big|\; \int_{[0, 1]} f(x) \, \mathrm{d}x = 1, \nonumber \\
    & \hspace{3cm} \sum_{t \in \mathcal{T}}|a_t|^{2\beta} \left|\int_{[0, 1]} f(x)\gamma_t(x)\,\mathrm{d}x\right|^2 = \sum_{t \in \mathcal{T}}|a_t|^{2\beta}|f_t|^2 < \infty\bigg\}.
\end{align}
\end{defn}

We again assume that the data are generated from Huber's contamination model \eqref{hubermodel}.  When $\{X_i\}_{i = 1}^n$ are i.i.d.~random variables with distribution $P_{f_\varepsilon} \in \mathcal{P}_\varepsilon(\mathcal{F}_\beta)$, we are interested in estimating the density of the inlier distribution $\theta(P) = f$.

The $\alpha$-LDP minimax risk defined in \eqref{intro_minimax} takes its specific form in the robust density estimation problem as follows
\begin{equation}\label{eq-minimax-risk-def-density}
    \mathcal{R}_{n,\alpha}(\mathcal{F}_\beta, \Phi \circ \rho, \varepsilon) =  \inf_{Q \in \mathcal{Q}_\alpha} \inf_{\tilde{f}} \sup_{P_{f_\varepsilon} \in \mathcal{P}_\varepsilon (\mathcal{F}_\beta)} \mathbb{E}_{P_{f_\varepsilon},Q}\Big\{\Phi \circ \rho (\tilde{f},f)\Big\},
\end{equation}
where the infimum over $\tilde{f}$ is taken over all measurable functions of the privatised data generated from some $Q \in \mathcal{Q}_\alpha$.  We consider two loss functions, both of which are commonly used in the nonparametric estimation literature \citep[e.g.][]{tsybakov2009introduction}.  To be specific, we consider $\Phi \circ \rho$ to be the squared-$L_2$ loss and the $L_\infty$ loss, separately.  For any two density functions $g_1$ and $g_2$ supported on $[0, 1]$, let the squared-$L_2$-loss and the $L_\infty$ loss be
\begin{equation}\label{eq-losses-def-density}
    \|g_1 - g_2\|_2^2 = \int_{[0, 1]} \{g_1(x) - g_2(x)\}^2 \,\mathrm{d}x  \quad \text{and} \quad \|g_1 - g_2\|_\infty = \sup_{x\in[0,1]} |g_1(x) - g_2(x)|, 
\end{equation}
respectively.

\subsection{Lower bound}\label{npd_lowerbound} 

In \Cref{npd_lowerbound_prop} below, we present lower bounds on $\mathcal{R}_{n,\alpha}(\mathcal{F}_\beta, \Phi \circ \rho, \varepsilon)$, with $\Phi \circ \rho$ taken to be the squared-$L_2$ loss and $L_{\infty}$ loss.  As we shall discuss later, \Cref{npd_lowerbound_prop} is an application of the general lower bound result \Cref{general_lower}.

\begin{prop}\label{npd_lowerbound_prop}
Let $\{X_i\}_{i = 1}^n$ be i.i.d.~random variables with distribution $P_{f_\varepsilon} \in \mathcal{P}_\varepsilon(\mathcal{F}_\beta)$.  For $\alpha \in (0,1)$, it holds that the $\alpha$-LDP minimax estimation risks defined in \eqref{eq-minimax-risk-def-density}, equipped with the squared-$L_2$ loss and the $L_\infty$ loss defined in \eqref{eq-losses-def-density}, are
\begin{equation}\label{npd_lowerbound_2}
    \mathcal{R}_{n,\alpha}(\mathcal{F}_\beta,\|\cdot\|^2_2,\varepsilon) \gtrsim (n\alpha^2)^{-\frac{2\beta}{2\beta+2}} \vee  \varepsilon^{\frac{4\beta}{2\beta+1}}
\end{equation}
and
\begin{equation}\label{npd_lowerbound_inf}
   \mathcal{R}_{n,\alpha}(\mathcal{F}_\beta,\|\cdot\|_\infty,\varepsilon) \gtrsim \left\{\frac{\log(n\alpha^2)}{n\alpha^2}\right\}^{\frac{2\beta-1}{4\beta+2}} \vee \varepsilon^{\frac{2\beta-1}{2\beta+1}},
\end{equation}
respectively.
\end{prop}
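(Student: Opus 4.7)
The plan is to apply the general lower bound in \Cref{general_lower}, so that
\[
    \mathcal{R}_{n,\alpha}(\mathcal{F}_\beta, \Phi \circ \rho, \varepsilon) \geq \mathcal{R}_{n,\alpha}(\mathcal{F}_\beta, \Phi \circ \rho) \vee \frac{\Phi(\omega(\varepsilon)/2)}{2},
\]
and then to control the two terms on the right-hand side separately for each of the losses.

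For the pure privacy term $\mathcal{R}_{n,\alpha}(\mathcal{F}_\beta, \Phi \circ \rho)$, I would appeal to existing local-privacy minimax lower bounds for Sobolev density estimation. For the squared-$L_2$ loss the rate $(n\alpha^2)^{-2\beta/(2\beta+2)}$ is essentially Theorem 3 of \cite{duchi2018minimax} (see also Theorem 1 in \cite{butucea2020local}); for the $L_\infty$ loss, the rate $\{\log(n\alpha^2)/(n\alpha^2)\}^{(2\beta-1)/(4\beta+2)}$ is given in \cite{butucea2020local}. If a self-contained argument is needed, one can use the standard Fano/Assouad technique with the $\alpha^2$-contraction of KL divergence under $\alpha$-LDP (Theorem 1 of \cite{duchi2018minimax}), combined with hypercube constructions obtained by perturbing the uniform density on $[0,1]$ by a family of bumps at resolution $h \asymp (n\alpha^2)^{-1/(2\beta+2)}$ (or $h \asymp \{\log(n\alpha^2)/(n\alpha^2)\}^{1/(2\beta+1)}$ in the $L_\infty$ case).

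The remaining task is to compute the total variation modulus of continuity $\omega(\varepsilon)$ for the Sobolev class $\mathcal{F}_\beta$ under each loss. My plan here is a single-bump two-point construction. Take $f_0\equiv 1$, fix a smooth compactly supported $\phi$ with $\int\phi=0$, and consider
\[
    f_1^{(2)}(x) = f_0(x) + \delta\, h^{-1/2}\phi((x-x_0)/h), \qquad f_1^{(\infty)}(x) = f_0(x) + \delta\,\phi((x-x_0)/h),
\]
for $L_2$ and $L_\infty$ respectively. A standard computation yields, for the $L_2$ scaling, $\mathrm{TV}\asymp \delta h^{1/2}$, $\|f_1^{(2)}-f_0\|_2\asymp \delta$, and $\beta$-Sobolev seminorm $\asymp \delta h^{-\beta}$. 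Saturating the Sobolev constraint by $\delta\asymp h^\beta$ and matching $\mathrm{TV}=\eta$ gives $h\asymp \eta^{2/(2\beta+1)}$ and
\[
    \|f_1^{(2)}-f_0\|_2 \asymp \eta^{2\beta/(2\beta+1)}.
\]
For the $L_\infty$ scaling the analogous computation gives $\mathrm{TV}\asymp \delta h$, $\|f_1^{(\infty)}-f_0\|_\infty\asymp \delta$, Sobolev seminorm $\asymp \delta h^{1/2-\beta}$, and hence
\[
    \|f_1^{(\infty)}-f_0\|_\infty \asymp \eta^{(2\beta-1)/(2\beta+1)}.
\]
With $\eta=\varepsilon/(1-\varepsilon)$ these give $\omega(\varepsilon)\asymp\varepsilon^{2\beta/(2\beta+1)}$ and $\omega(\varepsilon)\asymp\varepsilon^{(2\beta-1)/(2\beta+1)}$, so $\Phi(\omega(\varepsilon)/2)/2$ produces exactly the contamination terms $\varepsilon^{4\beta/(2\beta+1)}$ and $\varepsilon^{(2\beta-1)/(2\beta+1)}$ in \eqref{npd_lowerbound_2} and \eqref{npd_lowerbound_inf}.

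The main obstacle is technical rather than conceptual: the Sobolev class in \Cref{sobolev} is defined via an abstract orthonormal basis $\{\gamma_t\}$ with coefficient sequence $\{a_t\}$, whereas the bump computation above is most naturally carried out in a trigonometric or wavelet basis where $|a_t|\asymp |t|$. I would therefore either specialise to the trigonometric basis (the standard choice in \cite{duchi2018minimax} and \cite{butucea2020local}), verifying that the bumps satisfy the Sobolev constraint via Parseval, or project the bump onto $\{\gamma_t\}$ and use the growth condition on $|a_t|$ to bound the Sobolev norm. Apart from this one bookkeeping step, the argument is a direct combination of \Cref{general_lower} with a classical two-hypothesis lower bound and the established LDP rates for Sobolev density estimation.
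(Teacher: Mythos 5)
Your proposal follows the same structure as the paper's proof: invoke \Cref{general_lower}, cite the known $\alpha$-LDP minimax lower bounds for Sobolev density estimation without contamination (the paper uses Corollary~2.1 of \cite{butucea2020local} for both losses), and lower-bound $\omega(\varepsilon)$ via a two-point single-bump construction whose Sobolev seminorm saturates at TV distance $\eta$. Your bump scaling computations are correct and yield exactly the rates in \eqref{npd_lowerbound_2} and \eqref{npd_lowerbound_inf}. The one place you identify as a bookkeeping hurdle — verifying that the bump lies in the abstract Sobolev class of \Cref{sobolev} — is handled in the paper by the cleanest of your suggested workarounds: the bump is chosen to be a \emph{single wavelet basis function} $\gamma\psi_{jk}$ (with $f_0$ constant on the support of $\psi_{jk}$), so that the Sobolev constraint \eqref{besov_sobolev} reduces to the single-coefficient condition $2^{j\beta}\gamma \lesssim 1$, and both the $L_2$ and $L_\infty$ norms of the perturbation fall out of the same construction rather than requiring two separately normalized bumps. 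Aside from this small streamlining, the argument is the same.
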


In view of \Cref{general_lower}, the results in \Cref{npd_lowerbound_prop} are obtained by separately controlling the lower bounds on (i) the non-robust cases $\mathcal{R}_{n,\alpha}(\mathcal{F}_\beta,\|\cdot\|^2_2,0)$ and $\mathcal{R}_{n,\alpha}(\mathcal{F}_\beta,\|\cdot\|_\infty,0)$ and (ii) the total variation modulus of continuity $\omega(\varepsilon)$ under different loss functions.  

As for (i), due to the Sobolev embedding theorem for Besov space \citep[e.g. Proposition 4.3.20 in][]{gine2021mathematical}, it can be seen as a special case of Corollary 2.1 in \cite{butucea2020local}, which is a result on minimax rates for estimating density functions under $L_r$ loss ($r \geq 1$) without contamination but with an $\alpha$-LDP constraint.

As for (ii), we construct a lower bound on $\omega(\varepsilon)$ by considering a pair of density functions with the help of wavelet basis functions.  We note that Theorem 3 in \cite{uppal2020robust} studied a robust density estimation problem in the Besov space, under the Besov integral probability metrics but without privacy constraints.    

\subsection{Upper bounds}\label{npd_upperbound}

In this subsection, we study the robustness property of two $\alpha$-LDP estimators of the density function, subject to the squared-$L_2$ loss and the $L_{\infty}$ loss, in Theorems~\ref{L2_risk} and \ref{infinity_risk}, respectively.  Both estimators are in the form of linear projection estimators, but constructed based on different choices of orthonormal basis functions and privacy mechanisms.  

\medskip
\noindent \textbf{The squared-$L_2$ loss.}  To obtain an upper bound on $\mathcal{R}_{n,\alpha}(\mathcal{F}_\beta,\|\cdot\|^2_2,\varepsilon)$, we analyse a projection estimator based on the orthonormal basis for $L^2[0,1]$ consisting of trigonometric functions, i.e.
\begin{equation}\label{trigonometricbasis}
    \varphi_1(x) = 1, \quad \varphi_{2j}(x) = \sqrt{2}\cos(2\pi jx) \quad \mbox{and} \quad \varphi_{2j+1}(x) =\sqrt{2}\sin(2\pi jx), \quad j \in \mathbb{N}_+.
\end{equation}
With the choices $a_j = \lfloor (j+1)/2\rfloor$ in \eqref{generalelip}, the Sobolev space in \Cref{sobolev} is characterised by 
\begin{equation}\label{ellipsoid}
    \sum_{j=1}^\infty j^{2\beta}\theta_j^2 = \sum_{j=1}^\infty j^{2\beta}\left\{\int_{0}^1 f(x) \varphi_j(x)\,\mathrm{d}x\right\}^2 \leq r^2 < \infty,
\end{equation}
where $r > 0$ is some universal constant controlling the radius of the ellipsoid.  Condition \eqref{ellipsoid} can be translated into smoothness conditions on functions having $\beta$th order (weak) derivative in $L^2[0,1]$ \citep[e.g. Proposition 1.14 in][]{tsybakov2009introduction}.  Our projection estimator $\hat{f}$ is then constructed as follows.

\begin{itemize}
    \item [Step 1.] (Privatisation.) Given data $\{X_i\}_{i = 1}^n$, for any $i \in \{1, \ldots, n\}$, let $v = [\varphi_j(X_i)]_{j=1}^k \in \mathbb{R}^k$, where $k \in \mathbb{N}_+$ is a pre-specified tuning parameter.  Let the corresponding privatised data be $Z_i \in \mathbb{R}^k$ satisfying that $\mathbb{E}_Q[Z_{i,j}|X_i] = \varphi_j(X_i)$ and $Z_{i,j} = \{-B,B\}$ with $B \lesssim \sqrt{k}/\alpha$, where $\mathbb{E}_Q$ denotes the expectation under the privacy mechanism.  See \Cref{addition4} for full details of this privacy mechanism. 
    \item [Step 2.] (Estimator construction.) Let 
    \begin{equation}\label{fhat}
        \hat{f} = \sum_{j=1}^k \overline{Z}_j \varphi_j = \sum_{j=1}^k \left(\frac{1}{n}\sum_{i=1}^n Z_{i,j}\right)\varphi_j.
    \end{equation}
\end{itemize}

In the construction of $\hat{f}$, we choose the trigonometric functions, which satisfy the bounded basis function condition with 
\begin{equation}\label{eq-bounded-basis-fourier}
    \max_{1 \leq j \leq k}\|\varphi_j(\cdot)\|_{\infty} \leq \sqrt{2}.
\end{equation}
  We remark that one may also choose other basis functions, provided that all basis functions are upper bounded by an absolute constant in the function supremum norm.  This is to guarantee that the $\alpha$-LDP constraint (\ref{alpha-ldp}) holds \citep{duchi2018minimax}.  Another input is the truncation number $k$, which essentially truncates this infinite-dimensional nonparametric problem to a $k$-dimensional estimation problem.  In fact, the estimator \eqref{fhat} is considered in Section 5.2.2 of \citet{duchi2018minimax}, as a nonparametric density estimator, which is shown to be minimax rate optimal in terms of the squared-$L_2$ norm under local differential privacy without contamination \citep[cf.\ Corollary 7 in][]{duchi2018minimax}.  Together with \Cref{L2_risk} below, we will show that this private procedure is automatically robust, in view of Q2 in \Cref{sec-contributions}.

We remark that $\hat{f}$ defined in \eqref{fhat} is a privatised version of the widely-used nonparametric projection estimator \citep[e.g.\ Chapter 1 in][]{tsybakov2009introduction}, defined as
\[
\tilde{f} = \sum_{j=1}^k \left\{\frac{1}{n}\sum_{i=1}^n\varphi_j(X_i)\right\}\varphi_j, 
\]
which attains the optimal minimax rate under squared-$L_2$ loss when applied to nonparametric regression problems for functions belonging to the Sobolev class \eqref{ellipsoid} \citep[cf.\ Theorem 1.9 in][]{tsybakov2009introduction}.

\begin{thm}[The squared-$L_2$ norm case]\label{L2_risk}
Given i.i.d.~random variables $\{X_i\}_{i = 1}^n$ with distribution $ P_{f_\varepsilon} =  (1-\varepsilon)P_f+\varepsilon G$, where $P_f$ denotes the distribution with the density function $f \in \mathcal{F}_{\beta}$ defined in \eqref{generalelip}, and $G$ is an arbitrary distribution supported on $[0,1]$, for $\alpha \in (0,1)$ and $\varepsilon \in [0,{1}]$, the estimator $\hat{f}$ defined in \eqref{fhat} satisfies that
\[
    \mathbb{E}_{P_{f_\varepsilon},Q}[\|\hat{f} - f\|_2^2] \lesssim \varepsilon^{\frac{4\beta}{2\beta+1}} \vee (n\alpha^2)^{-\frac{2\beta}{2\beta+2}},
\]
when the tuning parameter $k$ is chosen to be $k \asymp \varepsilon^{-\frac{2}{2\beta+1}} \wedge (n\alpha^2)^{\frac{1}{2\beta +2}}$.
\end{thm}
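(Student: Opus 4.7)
The plan is to perform a standard bias-variance decomposition for the projection estimator, together with the Parseval identity, and then bound separately the privacy-induced variance, the contamination-induced bias on the retained frequencies, and the truncation bias on the discarded frequencies. The choice of $k$ then trades these off.

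First I would note that, by the orthonormality of $\{\varphi_j\}$, we may write
\[
    \|\hat f - f\|_2^2 = \sum_{j=1}^{k} (\bar Z_j - f_j)^2 + \sum_{j>k} f_j^2.
\]
Taking expectations and using $\mathbb{E}_Q[Z_{i,j}\mid X_i] = \varphi_j(X_i)$ together with $X_i \sim P_{f_\varepsilon}$, I obtain $\mathbb{E}[\bar Z_j] = \int \varphi_j f_\varepsilon = (f_\varepsilon)_j$. Hence
\[
    \mathbb{E}\|\hat f - f\|_2^2 = \sum_{j=1}^{k} \mathrm{Var}(\bar Z_j) + \sum_{j=1}^{k} \bigl((f_\varepsilon)_j - f_j\bigr)^2 + \sum_{j>k} f_j^2,
\]
which cleanly isolates the three error sources.

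Next I would bound each of the three sums. For the variance term, the description of the privacy mechanism gives $|Z_{i,j}| \leq B$ with $B \lesssim \sqrt{k}/\alpha$, so independence across $i$ yields $\mathrm{Var}(\bar Z_j) \leq B^2/n \lesssim k/(n\alpha^2)$, and summing over $j \leq k$ contributes $k^2/(n\alpha^2)$. For the contamination term, the key identity is $(f_\varepsilon)_j - f_j = \varepsilon \int \varphi_j (g-f)$; combining the uniform bound $\|\varphi_j\|_\infty \leq \sqrt{2}$ from \eqref{eq-bounded-basis-fourier} with $\|g-f\|_1 \leq 2$ gives $|(f_\varepsilon)_j - f_j| \lesssim \varepsilon$, so that this sum contributes at most a constant multiple of $\varepsilon^2 k$. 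For the truncation term, Sobolev smoothness via \eqref{ellipsoid} gives $\sum_{j>k} f_j^2 \leq k^{-2\beta} \sum_{j>k} j^{2\beta} f_j^2 \lesssim k^{-2\beta}$. Assembling these,
\[
    \mathbb{E}\|\hat f - f\|_2^2 \lesssim \frac{k^2}{n\alpha^2} + \varepsilon^2 k + k^{-2\beta}.
\]

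Finally I would insert the prescribed choice $k \asymp \varepsilon^{-2/(2\beta+1)} \wedge (n\alpha^2)^{1/(2\beta+2)}$. Splitting into the two regimes, in the contamination-dominated case $k \asymp \varepsilon^{-2/(2\beta+1)}$ the last two terms each equal $\varepsilon^{4\beta/(2\beta+1)}$, while the privacy term is controlled by $(n\alpha^2)^{-2\beta/(2\beta+2)}$ as a consequence of the inequality defining the regime; in the privacy-dominated regime $k \asymp (n\alpha^2)^{1/(2\beta+2)}$ the privacy and truncation terms each equal $(n\alpha^2)^{-2\beta/(2\beta+2)}$, while the contamination term is controlled by $\varepsilon^{4\beta/(2\beta+1)}$ via the opposite inequality. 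The resulting bound is $\varepsilon^{4\beta/(2\beta+1)} \vee (n\alpha^2)^{-2\beta/(2\beta+2)}$, as required. I expect the main obstacle to be the contamination term: because $g$ is an arbitrary distribution (possibly singular), one cannot apply Parseval to $g-f$, so one must rely on the boundedness of the basis $\varphi_j$; this is precisely why the trigonometric basis and the uniform bound \eqref{eq-bounded-basis-fourier} are essential, and it forces the crude bound $\varepsilon^2 k$ rather than something smaller, which in turn dictates the threshold $\varepsilon^{-2/(2\beta+1)}$ in the choice of $k$.
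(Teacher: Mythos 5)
Your proof is correct and follows essentially the same approach as the paper: orthonormality to split the squared $L_2$ error into retained and discarded coefficients, then a bias--variance decomposition for the retained part with the variance bounded via $|Z_{i,j}| \leq B \lesssim \sqrt{k}/\alpha$, the contamination bias via the uniform bound on $\varphi_j$, and the tail via the Sobolev ellipsoid, yielding $k^2/(n\alpha^2) + \varepsilon^2 k + k^{-2\beta}$ and then optimising $k$. Your explicit three-term split (privacy variance, contamination bias, truncation bias) is a slightly cleaner presentation of the same calculation, and your closing remark about why Parseval cannot be used on $g-f$ and why the bounded-basis condition is therefore essential matches the mechanism the paper relies on.
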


Comparing with the lower bound in \eqref{npd_lowerbound_2}, \Cref{L2_risk} shows that under a suitable choice of~$k$, the estimator $\hat{f}$ is minimax rate-optimal in the presence of contamination and a local privacy constraint.   

\medskip
\noindent \textbf{The $L_{\infty}$ loss.}  Our next goal is to obtain an upper bound on $ \mathcal{R}_{n,\alpha}(\mathcal{F}_\beta,\|\cdot\|_\infty,\varepsilon) $, and it turns out that the trigonometric basis used in the previous analysis under the squared-$L_2$ risk is not flexible enough to obtain the optimal rate in the $L_\infty$ case. One reason is that it prevents the use of Laplace mechanism to construct optimal private estimator. As discussed in the last paragraph of Section 5.2.2 in \cite{duchi2018minimax}, even under $L_2$ loss, adding Laplace noise to the trigonometric basis leads to sub-optimal rate of order $(n\alpha^2)^{-2\beta/(2\beta+3)}$, which is worse than the corresponding optimal rate $(n\alpha^2)^{-2\beta/(2\beta+2)}$ as that in Theorem \ref{L2_risk}.  We instead exploit the wavelet basis, which allows the use of a simple Laplace mechanism to efficiently privatise the empirical wavelet coefficients \citep{butucea2020local}.  

Given a father wavelet $\phi: [-A, A] \to \mathbb{R}$ and a mother wavelet $\psi: [-A, A] \to \mathbb{R}$, where $A > 0$ is an absolute constant (see e.g.~Section~4.2.1 in \citealt{gine2021mathematical}), satisfying 
\begin{equation}\label{regular}
    \int_{\mathbb{R}} \psi(x)\,\mathrm{d}x = 0,  \quad \|\psi\|_{\infty} <\infty \quad \text{and} \quad \|\phi\|_{\infty} <\infty,
\end{equation}
a wavelet basis of $L^2(\mathbb{R})$ can be formed as
\[
\{\phi_k = \phi(\cdot - k): k \in \mathbb{Z}\} \cup \{\psi_{jk} = 2^{j/2}\psi(2^j(\cdot)-k): j \in \mathbb{N}_+\cup \{0\}, k \in \mathbb{Z}\}.
\]
We denote $\phi_k$ as $\psi_{-1k}$ to simplify the notation. Given such a basis, we have that for any $f \in L^2(\mathbb{R})$,
\begin{equation}\label{eq-f-decompose-wavelet}
    f(x) =  \sum_{j \geq -1} \sum_{k \in \mathbb{Z}}\beta_{jk}\psi_{jk}(x) =  \sum_{j \geq -1} \sum_{k \in \mathbb{Z}} \left\{\int_{\mathbb{R}} f(x')\psi_{jk}(x')\, \mathrm{d}x'\right\} \psi_{jk}(x).
\end{equation}
Note that one can periodise a wavelet basis of $L^2(\mathbb{R})$ to construct a basis on $L^2[0,1]$ or apply boundary correction to wavelet basis for non-periodic functions supported on $[0,1]$ while regular wavelet properties including \eqref{regular} still hold. We refer to \cite{gine2021mathematical} and \cite{daubechies1992ten} for more detailed introduction on the theory of wavelets.

Since the density functions and wavelet basis are assumed to have compact supports on $[0,1]$ and $[-A,A]$ respectively, the representation \eqref{eq-f-decompose-wavelet} implies that for each resolution level $j \geq -1$, $|\mathcal{N}_j| = \left|\{k: \, \beta_{jk} \neq 0\}\right| \leq 2^j + 2A + 1$. With the choice $a_j = 2^j$ in \eqref{generalelip}, the Sobolev space in Definition \ref{sobolev} can be characterised by
\begin{equation}\label{besov_sobolev}
     \sum_{j\geq -1}(2^j)^{ 2\beta }\|\beta_{j\cdot}\|_2^2 = \sum_{j\geq -1}(2^j)^{ 2\beta } \left(\sum_{k \in \mathcal{N}_j}\beta^2_{jk} \right) < \infty.
\end{equation}
Condition \eqref{besov_sobolev} can also be translated to smoothness conditions on functions having $\beta$th order (weak) derivative in $L^2(\mathbb{R})$ \citep[e.g.\ Proposition 4.3.20 in][]{gine2021mathematical}.

The projection estimator is then constructed as follows.
\begin{itemize}
    \item [Step 1.] (Privatisation.)  Given data $\{X_i\}_{i = 1}^n$, for any $i \in \{1, \dotsc, n\}$, $j \in \{-1, 0, \dotsc, J\}$ and $k \in \mathcal{N}_j$, where $J \in \mathbb{N}_+$ is a pre-specified tuning parameter, let $W_{ijk}$'s be independent standard Laplace random variables which are also independent of $X_i$'s, the noise parameter $\sigma_J$ be
    \begin{equation}\label{sigmaJ}
        \sigma_J = C 2^{J/2}/\alpha, \quad \mbox{with} \quad C = (8\lceil A\rceil+4)\|\psi\|_\infty \sqrt{2}(\sqrt{2}-1)^{-1},
    \end{equation}
    and the privatised empirical wavelet coefficients be
    \[
        \hat{\beta}_{jk} = \frac{1}{n}\sum_{i=1}^n Z_{ijk} = \frac{1}{n} \sum_{i = 1}^n \left\{ \psi_{jk}(X_i)+\sigma_JW_{ijk}\right\}.
    \]
    \item [Step 2.] (Estimator construction.) Let the final estimator be
    \begin{equation}\label{flap}
        \hat{f}_{\mathrm{Lap}} = \sum_{j = -1}^J\sum_{k \in \mathcal{N}_j}\hat{\beta}_{jk}\psi_{jk}.
    \end{equation} 
\end{itemize}

 The tuning parameter $J$ serves as a truncation parameter, reducing an infinite-dimensional nonparametric estimation problem to a finite-dimensional problem with dimensionality being $\sum_{j = -1}^J |\mathcal{N}_j|$.  The noise level $\sigma_J$ is chosen to guarantee the $\alpha$-LDP constraint \citep[Proposition 3.1 in][]{butucea2020local}.

The estimator \eqref{flap} is previously studied in \cite{butucea2020local}, without the presence of contamination but  shown to be optimal in terms of estimating the $L_{\infty}$-loss, under $\alpha$-LDP constraint. We remark that \cite{butucea2020local} studies a more general space and a wider range of loss functions.   Together with \Cref{infinity_risk} below, we will show that this private procedure is also automatically robust, again in view of Q2 in \Cref{sec-contributions}.

\begin{thm}[The $L_\infty$ norm case]\label{infinity_risk}
Given i.i.d.~random variables $\{X_i\}_{i = 1}^n$ with distribution $ P_{f_\varepsilon} =  (1-\varepsilon)P_f+\varepsilon G$, where $P_f$ denotes the distribution with the density function $f \in \mathcal{F}_{\beta}$, defined in \eqref{generalelip}, and $G$ is an arbitrary distribution supported on $[0,1]$, for $\alpha \in (0,1)$ and $\varepsilon \in [0,{1}]$, the estimator $\hat{f}_{\mathrm{Lap}}$ defined in \eqref{flap} satisfies that
\[
    \mathbb{E}_{P_{f_\varepsilon},Q}(\|\hat{f}_{\mathrm{Lap}} - f\|_{\infty}) \lesssim \left\{\frac{\log(n\alpha^2)}{n\alpha^2}\right\}^{\frac{2\beta-1}{4\beta+2}} \vee \varepsilon^{\frac{2\beta-1}{2\beta+1}},
\]
with the tuning parameter $J$ satisfying
    \[
        2^J \asymp \left\{\frac{\log(n\alpha^2)}{n\alpha^2}\right\}^{-\frac{1}{2\beta+1}} \wedge \varepsilon^{-\frac{2}{2\beta+1}}.
    \]
\end{thm}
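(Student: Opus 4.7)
\medskip

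\noindent\textbf{Proof proposal.} The plan is a bias--stochastic decomposition taken around the wavelet projection of the \emph{contaminated} density. Write $f_\varepsilon = (1-\varepsilon)f + \varepsilon g$ (treating $G$ as having a density for notational simplicity), let $\beta_{jk}^\varepsilon = \int \psi_{jk}\,\mathrm{d}P_{f_\varepsilon}$, and let $f^J, f_\varepsilon^J, g^J$ denote the projections of $f, f_\varepsilon, g$ onto $\bigoplus_{j=-1}^J \mathrm{span}(\psi_{jk}:k\in\mathcal{N}_j)$. Since the $W_{ijk}$ are independent of the $X_i$ and zero-mean, $\mathbb{E}_{P_{f_\varepsilon},Q}[\hat{\beta}_{jk}] = \beta_{jk}^\varepsilon$ and hence $\mathbb{E}[\hat{f}_{\mathrm{Lap}}] = f_\varepsilon^J$. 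This gives the decomposition
\[
\hat{f}_{\mathrm{Lap}} - f \;=\; \underbrace{\bigl(\hat{f}_{\mathrm{Lap}} - f_\varepsilon^J\bigr)}_{\text{stochastic}} \;+\; \underbrace{\varepsilon\bigl(g^J - f^J\bigr)}_{\text{contamination bias}} \;+\; \underbrace{\bigl(f^J - f\bigr)}_{\text{truncation bias}},
\]
which is then bounded termwise in $L_\infty$ and optimised over $J$.

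\medskip

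\noindent The two bias terms use only the geometric properties of the wavelet basis. For the truncation bias, \eqref{besov_sobolev} yields $\max_k|\beta_{jk}|\leq \|\beta_{j\cdot}\|_2 \lesssim 2^{-j\beta}$, while at each $x \in [0,1]$ only $O(1)$ of the $k$ give $\psi_{jk}(x)\neq 0$ (compact support of $\psi$), each with $|\psi_{jk}(x)|\leq \|\psi\|_\infty 2^{j/2}$; summing the geometric series in $j>J$ gives $\|f^J - f\|_\infty \lesssim \sum_{j>J} 2^{-j(\beta-1/2)} \lesssim 2^{-J(\beta-1/2)}$, the Sobolev embedding into bounded continuous functions. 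For the contamination bias, $G$ is arbitrary, so the coefficients $\int \psi_{jk}\,\mathrm{d}G$ can only be controlled through the uniform bound $\|\psi_{jk}\|_\infty \leq \|\psi\|_\infty 2^{j/2}$; combined with the same $O(1)$-per-level argument this gives the crude but sufficient $\|g^J\|_\infty \lesssim \sum_{j\leq J} 2^j \lesssim 2^J$, and hence $\|\varepsilon(g^J-f^J)\|_\infty \lesssim \varepsilon 2^J$.

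\medskip

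\noindent The heart is the stochastic term. Splitting $\hat{\beta}_{jk}-\beta_{jk}^\varepsilon$ into the empirical average of the bounded variables $\psi_{jk}(X_i)-\beta_{jk}^\varepsilon$ (with $\|\psi_{jk}\|_\infty \leq \|\psi\|_\infty 2^{j/2}$) and the sub-exponential Laplace average $\sigma_J n^{-1}\sum_i W_{ijk}$, Bernstein's inequality for the former and the standard Laplace sub-exponential tail for the latter, combined with a union bound over the $\lesssim 2^J$ indices $(j,k)$ with $j\leq J$, give, on an event of probability at least $1-(n\alpha^2)^{-1}$,
\[
\max_{j\leq J,\,k}|\hat{\beta}_{jk}-\beta_{jk}^\varepsilon| \;\lesssim\; (2^{j/2}+\sigma_J)\sqrt{\log(n\alpha^2)/n}.
\]
Inserting this into $\|\hat{f}_{\mathrm{Lap}}-f_\varepsilon^J\|_\infty \lesssim \sum_{j=-1}^J 2^{j/2}\max_k|\hat{\beta}_{jk}-\beta_{jk}^\varepsilon|$ and using $\sigma_J \asymp 2^{J/2}/\alpha$ produces a geometric sum dominated by its last term, namely $2^J\sqrt{\log(n\alpha^2)/(n\alpha^2)}$. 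On the complementary low-probability event I would fall back on a crude deterministic bound together with direct moment control of the Laplace sums to pass to expectation with negligible loss. Combining the three pieces,
\[
\mathbb{E}\|\hat{f}_{\mathrm{Lap}}-f\|_\infty \;\lesssim\; 2^J\sqrt{\log(n\alpha^2)/(n\alpha^2)} \;+\; \varepsilon\, 2^J \;+\; 2^{-J(\beta-1/2)},
\]
and the prescribed choice of $2^J$ balances the truncation bias against whichever of the stochastic term (when privacy dominates) or the contamination bias (when $\varepsilon$ dominates) is the larger of the two.

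\medskip

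\noindent The main obstacle is that the variance appearing in Bernstein's inequality, $\int \psi_{jk}^2\,\mathrm{d}P_{f_\varepsilon}$, cannot be sharpened using any bound on $\|f\|_\infty$ because the contaminating $G$ is arbitrary; one is forced to use the uniform bound $\|\psi_{jk}\|_\infty^2 \lesssim 2^j$, which is precisely why the stochastic cost and the contamination bias enter additively, with the same $2^J$ scaling, in the final rate. A secondary technicality is converting the high-probability maximal bound into an $L_1$ statement for $\|\hat f_{\mathrm{Lap}} - f_\varepsilon^J\|_\infty$ without loss of logarithmic factors, which is why the complement event must be handled explicitly via the Laplace tails rather than absorbed into the concentration constant.
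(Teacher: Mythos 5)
Your proof is correct and follows essentially the same route as the paper. The decomposition into Laplace noise, empirical fluctuation, contamination bias and truncation bias; the bound $|\beta_{jk}|\lesssim 2^{-j\beta}$ from the Sobolev/Besov embedding; the per-level reduction using compact support of $\psi$ and $\|\psi_{jk}\|_\infty\lesssim 2^{j/2}$; the geometric summation over $j\leq J$ dominated by the top level; and the balancing choice of $2^J$ — all coincide with the paper's argument, up to the cosmetic difference that you centre around the contaminated coefficients $\beta_{jk}^\varepsilon$ first and then peel off the $\varepsilon(g^J-f^J)$ term, whereas the paper keeps all three coefficient errors inside a single $\max_{j\leq J,k}|\hat\beta_{jk}-\beta_{jk}|$ (Lemma~\ref{L4}). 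The one place the paper is cleaner is precisely the ``secondary technicality'' you flag: rather than proving a high-probability maximal bound via Bernstein/Laplace tails plus a union bound and then converting to expectation by handling the complementary event, the paper applies \emph{expectation-form} maximal inequalities directly (Corollary 2.6 and Theorem 2.5 in Boucheron et al.) for sub-exponential and sub-Gaussian families of size $\lesssim 2^J$, which immediately delivers $\mathbb{E}\max_{j\leq J,k}|\hat\beta_{jk}-\tilde\beta_{jk}|\lesssim\sigma_J\sqrt{J/n}$ and $\mathbb{E}\max_{j\leq J,k}|\tilde\beta_{jk}-\mathbb{E}\tilde\beta_{jk}|\lesssim 2^{J/2}\sqrt{J/n}$ with $J\lesssim\log(n\alpha^2)$, bypassing the tail-event bookkeeping entirely. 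If you keep your high-probability formulation, fix the typo $2^{j/2}\to 2^{J/2}$ inside your displayed maximal bound (the right-hand side of a max over $j\leq J$ cannot depend on $j$), and carry out the tail-event argument carefully: the Laplace noise is unbounded, so the ``crude deterministic bound'' must be replaced by a second-moment (or higher) bound on the Laplace sums before Cauchy--Schwarz against $\mathbb{P}(\text{bad event})^{1/2}$ will close the gap.
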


Comparing with the lower bound in \eqref{npd_lowerbound_inf}, \Cref{infinity_risk} shows that under a suitable choice of $J$, the estimator $\hat{f}_{\mathrm{Lap}}$ is minimax rate optimal. 

\subsection{Discussion}\label{sec-disc-density}

In the classical nonparametric density estimation literature \citep[e.g.][]{tsybakov2009introduction}, it is known that without the presence of contamination or privacy constraints, 
\begin{equation}\label{eq-classical-rates-density}
\mathcal{R}_{n,\infty}(\mathcal{F}_\beta,\|\cdot\|^2_2,0) \asymp n^{-\frac{2\beta}{2\beta+1}} \quad \text{and} \quad \mathcal{R}_{n,\infty}(\mathcal{F}_\beta,\|\cdot\|_\infty,0) \asymp \left\{\frac{\log(n)}{n}\right\}^{\frac{2\beta}{4\beta+2}}. 
\end{equation}
With the presence of both contamination and privacy constraints, in this section, we have shown that 
\[
\mathcal{R}_{n,\alpha}(\mathcal{F}_\beta,\|\cdot\|^2_2,\varepsilon) \asymp (n\alpha^2)^{-\frac{2\beta}{2\beta+2}} \vee  \varepsilon^{\frac{4\beta}{2\beta+1}}   \,\, \text{and} \,\, \mathcal{R}_{n,\alpha}(\mathcal{F}_\beta,\|\cdot\|_\infty,\varepsilon) \asymp \left\{\frac{\log(n\alpha^2)}{n\alpha^2}\right\}^{\frac{2\beta-1}{4\beta+2}} \vee \varepsilon^{\frac{2\beta-1}{2\beta+1}}.
\]
Comparing our results with the classical rates in \eqref{eq-classical-rates-density}, we see that, similar to the mean estimation problem studied in \Cref{sec2}, the cost of preserving privacy is manifested through a reduction of the effective sample size from $n$ to $n\alpha^2$ and a loss in the exponent of convergence rate, both of which have been observed in the literature \citep{duchi2013local,duchi2018minimax,butucea2020local}.  It is, however, interesting to observe that the privacy constraint and the contamination proportion are completely isolated in terms of the fundamental limits. We also remark that the condition of bounded basis function \eqref{eq-bounded-basis-fourier} and \eqref{regular} are critical for both privatising the data and being robust to contamination.

On the other hand -- with contamination but without LDP constraints -- \cite{uppal2019nonparametric} studied a non-private version of $\hat{f}_\mathrm{Lap}$, which is linear, and a non-linear wavelet thresholding estimator for robust estimation of densities belonging to Besov space. They showed that the wavelet thresholding estimator is optimal for a wide range of loss functions, but for the squared-$L_2$ loss and $L_\infty$ loss we considered here, it suffices to use linear estimators to achieve optimality in the presence of contamination. (Similar phenomena were observed in \cite{butucea2020local} when estimating functions in Besov space under local privacy constraint.) Therefore, we may again view $\hat{f}_\mathrm{Lap}$ as a properly privatised version of a robust estimator that yields optimal performance, the success of which depends crucially on the choice of basis function.

In the robust statistics literature \citep[e.g.][]{chen2016general,chen2018robust,uppal2020robust}, an interesting quantity to investigate is the maximum proportion of contamination such that $\mathcal{R}_{n,\alpha}(\theta(\mathcal{P}),\Phi \circ \rho,\varepsilon) \asymp \mathcal{R}_{n,\alpha}(\theta(\mathcal{P}),\Phi \circ \rho,0) $, that is the maximum proportion of contamination that the estimators can tolerate to obtain the optimal rate without contamination.  In the private setting, denoting this quantity as $\varepsilon^*_{\alpha}$, we have that $\varepsilon^*_{\alpha} \asymp (n\alpha^2)^{-(2\beta+1)/(4\beta+4)}$ in the squared-$L_2$ case and $\varepsilon^*_{\alpha} \asymp \left\{\log(n\alpha^2)/(n\alpha^2)\right\}^{1/2}$ in the $L_\infty$ case. Comparing to the non-private setting, denoting this quantity as $\varepsilon^*$, where $\varepsilon^* \asymp n^{-1/2}$ in the squared-$L_2$ case and $\varepsilon^* \asymp \left\{\log(n\alpha^2)/(n\alpha^2)\right\}^{\beta/(2\beta-1)}$ in the $L_\infty$ case, we see that private algorithms can tolerate more contamination but at the price of converging at a slower rate, due to the presence of privacy constraints.

We conjecture that the condition that the density function of interest is supported on a compact set is critical in terms of achieving optimality under both contamination and LDP constraints. 
  As a consequence of this compact support condition, we require bounded basis functions -- see \eqref{eq-bounded-basis-fourier} and \eqref{regular} -- which facilitate our proofs.  Another direct consequence of this compact support condition is the following summary. 

As for the questions Q1 and Q2 raised in \Cref{sec-contributions}, in this robust density estimation problem, we see that
\begin{itemize}
    \item there exist procedures optimal against contamination \citep{uppal2019nonparametric} that can be properly privatised to achieve optimal performance; and 
    \item there are existing $\alpha$-LDP procedures \citep{duchi2018minimax,butucea2020local} that are automatically robust and minimax rate optimal. 
\end{itemize}
\section{Conclusions}

In this paper, we studied various statistical problems under both Huber's $\varepsilon$-contamination model \eqref{hubermodel} and LDP constraints \eqref{alpha-ldp}.  For the four problems concerned in this paper (with one left in \Cref{secE}), we made an attempt to answer Q1 and Q2 in \Cref{sec-contributions}; that is, being aware of the deep connections between robustness and LDP, what we can say about the ability of robust procedures to work with privatised data and about the robustness of private procedures.  For all four problems that we studied, we find procedures that are simultaneously robust, privacy-preserving and statistically rate-optimal.  We commented on the connections between our methods to those which are used only under contamination or only under LDP constraints, and provided partial answers to these two questions in specific cases.  

The optimality of our procedures mostly relies on the knowledge of $\varepsilon$ - an upper bound on the contamination level.  This is an assumption commonly used in the literature \citep[e.g.][]{huber1992robust,lai2016agnostic,prasad2020robust,lugosi2021robust} though impractical, since it is impossible to estimate $\varepsilon$ when the contamination distribution is not specified.  An overly large input of $\varepsilon$ leads to an inflated error bound while a conservative input of $\varepsilon$ leads to unjustified error controls.  There has been some recent work on general approaches to robust methodology when $\varepsilon$ is unknown \citep[e.g.][]{jain2022robust}.  Applied to our procedures, the theoretical guarantees are not immediate since our error bounds hold in expectation rather than almost surely. Nevertheless, it would be interesting to adapt these ideas to further develop private, robust and optimal procedures, which are also adaptive to $\varepsilon$.

We also note that \Cref{general_lower} is a markedly general result, the potential of which is by no means fully exploited in this paper. Based on the current work, which demonstrates the promise of jointly studying robustness and local differential privacy, we will continue working on understanding the interplay between these two areas in other settings, in particular for problems in high dimensions, with \Cref{general_lower} providing a minimax lower bound to start with. 

\section*{Acknowledgements}

We are thankful to the anonymous referees for detailed comments and suggestions, which greatly improved the paper.

\bibliographystyle{agsm}
\bibliography{ref}

\appendix

\section*{Appendices}

All the technical details and some additional results are provided in the Appendices.  Appendices~\ref{app-A}, \ref{app-b}, \ref{app-c} and \ref{app-d} collect all the proofs of the results in Sections~\ref{sec-introduction}, \ref{sec1}, \ref{sec2} and \ref{sec3}, respectively.  Additional results on the two-point testing problem subject to a more general class of differential privacy constraints are presented in \Cref{Sec:RDP}.  We consider the robust univariate median estimation problem under $\alpha$-LDP in \Cref{secE}, where a stochastic gradient descent type estimator is summoned to provide an optimal excess risk.

\section[]{Technical details of \Cref{sec-introduction}} \label{app-A}

\begin{proof}[Proof of \Cref{general_lower}]
Since $\varepsilon \in [0,1)$, due to the definition of $\mathcal{P}_\varepsilon(\mathcal{P})$, we have that 
\begin{equation}\label{eq-prop-1-proof-1}
    \mathcal{R}_{n,\alpha}(\theta(\mathcal{P}), \Phi \circ \rho, \varepsilon) \geq \mathcal{R}_{n,\alpha}(\theta(\mathcal{P}), \Phi \circ \rho).
\end{equation}

Using Lemma \ref{2eps} we have that when $\mathrm{TV}(R_0,R_1) \leq \varepsilon/(1-\varepsilon)$, there exist distributions $G_0$ and $G_1$ such that 
\[
    (1-\varepsilon)R_0+\varepsilon G_0 = (1-\varepsilon)R_1+\varepsilon G_1. 
\]
Writing $\widetilde{R}_i = (1-\varepsilon)R_i+\varepsilon G_i$ for $i=0,1$, it then holds that 
\begin{equation}\label{equaldist}
    \mathrm{TV}(\widetilde{R}_0,\widetilde{R}_1) = 0.
\end{equation}
We therefore have 
\begin{equation}\label{eq-prop-1-proof-2}
\mathcal{R}_{n,\alpha}(\theta(\mathcal{P}_\varepsilon), \Phi \circ \rho) \geq \frac{\Phi(\omega(\varepsilon)/2)}{2} \left(1-\sqrt{4n\alpha^2}\mathrm{TV}(\widetilde{R}_0, \widetilde{R}_1)\right) = \frac{\Phi(\omega(\varepsilon)/2)}{2},
\end{equation}
where the inequality is due to Proposition 1 in \cite{duchi2018minimax} -- a private version of Le Cam's Lemma and the identity is due to \eqref{equaldist}.

Combining \eqref{eq-prop-1-proof-1} and \eqref{eq-prop-1-proof-2}, we conclude the proof.
\end{proof}

\begin{lemma}[Theorem 5.1 in \cite{chen2018robust}]\label{2eps}
Let $R_1$ and $R_2$ be two distributions on $\mathcal{X}$. If for some $\varepsilon \in [0,1)$, we have that $\mathrm{TV}(R_1,R_2) = \varepsilon/(1-\varepsilon)$, then there exists two distributions on the same probability space $G_1$ and $G_2$ such that 
\begin{equation}\label{eq-lemma-huberthres}
    (1-\varepsilon)R_1 + \varepsilon G_1 = (1-\varepsilon)R_2+\varepsilon G_2.
\end{equation}
\end{lemma}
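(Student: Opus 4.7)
The plan is to construct $G_1$ and $G_2$ explicitly via the Hahn--Jordan decomposition of the signed measure $R_1 - R_2$. Rearranging the target identity \eqref{eq-lemma-huberthres}, one sees that it suffices to exhibit probability measures $G_1, G_2$ with
\[
    \varepsilon(G_2 - G_1) = (1-\varepsilon)(R_1 - R_2),
\]
so the task reduces to realising the signed measure on the right-hand side, suitably rescaled, as a difference of two probability measures.

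First I would write $R_1 - R_2 = \nu^+ - \nu^-$, where $\nu^+$ and $\nu^-$ are mutually singular non-negative measures on $\mathcal{X}$. Because $R_1$ and $R_2$ are both probability measures, the positive and negative parts balance, so $\|\nu^+\| = \|\nu^-\|$. With the convention in the paper that $\mathrm{TV}(R_1,R_2) = \sup_S |R_1(S) - R_2(S)|$, evaluating both sides on a Hahn set gives $\|\nu^+\| = \|\nu^-\| = \mathrm{TV}(R_1,R_2) = \varepsilon/(1-\varepsilon)$. I would then define
\[
    G_1 = \frac{1-\varepsilon}{\varepsilon}\, \nu^-, \qquad G_2 = \frac{1-\varepsilon}{\varepsilon}\, \nu^+.
\]
Because $\nu^\pm$ are non-negative and the prefactor is chosen so that each has total mass $1$, both $G_1$ and $G_2$ are bona fide probability measures on $\mathcal{X}$ (so the common sample space condition is satisfied automatically).

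Finally, I would verify the identity by direct substitution: $(1-\varepsilon)R_1 + \varepsilon G_1 = (1-\varepsilon)(R_1 + \nu^-)$ and $(1-\varepsilon)R_2 + \varepsilon G_2 = (1-\varepsilon)(R_2 + \nu^+)$, and these are equal because the Jordan identity $R_1 - R_2 = \nu^+ - \nu^-$ rearranges to $R_1 + \nu^- = R_2 + \nu^+$ as measures.

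The only genuine content is hitting on the right construction; once Hahn--Jordan is invoked, the remainder is routine bookkeeping. The main obstacle, such as it is, lies in the mass accounting $\|\nu^\pm\| = \varepsilon/(1-\varepsilon)$, which is the precise reason the hypothesis on $\mathrm{TV}(R_1,R_2)$ makes the rescaled positive and negative parts normalise to probability measures — had $\mathrm{TV}(R_1,R_2)$ been strictly smaller than $\varepsilon/(1-\varepsilon)$, one would need to add a common component $\eta$ to both $\varepsilon G_1$ and $\varepsilon G_2$ to absorb the slack, giving an analogous (and slightly more general) statement.
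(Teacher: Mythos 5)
Your proof is correct and is essentially the paper's own argument, just phrased abstractly: the paper works with the Radon--Nikodym densities $r_1, r_2$ of $R_1, R_2$ with respect to $R_1+R_2$ and sets $\mathrm{d}G_1 \propto (r_2-r_1)_+\,\mathrm{d}(R_1+R_2)$, $\mathrm{d}G_2 \propto (r_1-r_2)_+\,\mathrm{d}(R_1+R_2)$ normalised by $\mathrm{TV}(R_1,R_2)$, which is exactly your $\frac{1-\varepsilon}{\varepsilon}\nu^-$ and $\frac{1-\varepsilon}{\varepsilon}\nu^+$. The mass accounting $\|\nu^\pm\|=\mathrm{TV}(R_1,R_2)$ and the rearrangement $R_1+\nu^-=R_2+\nu^+$ are also precisely what the paper's substitution check amounts to.
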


\begin{proof}
This is from the proof of Theorem 5.1 in \cite{chen2018robust}. Let the densities of $R_1$ and $R_2$ be
\[
r_1 = \frac{\mathrm{d}R_1}{\mathrm{d}(R_1+R_2)} \qquad \text{and} \qquad r_2 = \frac{\mathrm{d}R_2}{\mathrm{d}(R_1+R_2)}.
\]
Then the following choices of $G_1$ and $G_2$ 
\[
\frac{\mathrm{d}G_1}{\mathrm{d}(R_1+R_2)} = \frac{(r_2-r_1)\mathbbm{1}\{r_2\geq r_1\}}{\mathrm{TV}(R_1,R_2)} \qquad \text{and} \qquad \frac{\mathrm{d}G_2}{\mathrm{d}(R_1+R_2)} = \frac{(r_1-r_2)\mathbbm{1}\{r_1\geq r_2\}}{\mathrm{TV}(R_1,R_2)}
\]
satisfy \eqref{eq-lemma-huberthres}, following the calculations done in the proof of Theorem 5.1 in \cite{chen2018robust}.
\end{proof}

\section[]{Technical details of \Cref{sec1}}\label{app-b}

\subsection[]{Proofs of results in \Cref{sec1}}\label{appendix:testing}

\begin{proof}[Proof of Proposition \ref{testing_lowerbound}]
We use $QM_0^n$ and $QM_1^n$ to denote the joint distribution of $Z_1,\dotsc,Z_n$, when $X_1,\dotsc,X_n$ are generated from $M_0 \in \mathcal{P}_\varepsilon(P_0)$ and $M_1 \in \mathcal{P}_\varepsilon(P_1)$, respectively.  We then have 
\begin{align*}
    \mathcal{R}_{n,\alpha}(\varepsilon) &\geq \inf_{Q \in \mathcal{Q}_\alpha} \inf_{\phi\in \Phi_Q} \left\{QM_0^n(\phi = 1) + QM_1^n(\phi = 0)\right\} \\
    & = \inf_{Q \in \mathcal{Q}_\alpha} \inf_{\phi\in \Phi_Q} \left\{1 - [QM_0^n(\phi = 0) - QM_1^n(\phi = 0)]\right\} \geq \inf_{Q \in \mathcal{Q}_\alpha} \{1-\mathrm{TV}(QM_0^n,QM_1^n)\} \\
    & \geq \inf_{Q \in \mathcal{Q}_\alpha} \exp\left\{-\mathrm{KL}(QM_0^n,QM_1^n)\right\} \geq \exp\{-4n(e^\alpha-1)^2\mathrm{TV}^2(M_0,M_1)\},
\end{align*}
where the second inequality follows from the definition of the total variation distance, the third inequality follows from Lemma 2.6 in \cite{tsybakov2009introduction} and final inequality is due to Corollary~3 in \cite{duchi2018minimax}. 

Let $\eta=\mathrm{TV}(P_0,P_1)/\{1+\mathrm{TV}(P_0,P_1)\}$ and suppose that $\eta \geq \varepsilon$. By Lemma~\ref{2eps}, there exist probability distributions $G_0$ and $G_1$ with $(1-\eta)P_0 + \eta G_0 = (1-\eta)P_1 + \eta G_1$, so that we may write $G_0-G_1 = (1-\eta)/\eta(P_1-P_0)$. For $j=0,1$, define $M_j = (1-\varepsilon)P_j + \varepsilon G_j$. Then
\begin{align*}
    \mathrm{TV}(M_0,M_1) &= \sup_S \{ M_0(S) - M_1(S) \} = \sup_S [ (1-\varepsilon)\{P_0(S) - P_1(S)\} + \varepsilon\{G_0(S) - G_1(S)\} ] \\
    &= \sup_S [\{1-\varepsilon - \varepsilon(1-\eta)/\eta\}\{P_0(S) - P_1(S)\} ] = (1-\varepsilon/\eta) \mathrm{TV}(P_0,P_1) \\
    & = (1-\varepsilon) \{ \mathrm{TV}(P_0,P_1) - \varepsilon/(1-\varepsilon)\} \leq \mathrm{TV}(P_0,P_1) - \varepsilon/(1-\varepsilon).
\end{align*}
On the other hand, when $\eta < \varepsilon$ we take $M_j = (1-\eta)P_j + \eta G_j$. Therefore, we have $\mathcal{R}_{n,\alpha}(\varepsilon) \geq \exp{(-16n\alpha^2 \{\mathrm{TV}(P_0,P_1)-\varepsilon/(1-\varepsilon) \}_+^2 )}$ when $\alpha \in (0,1)$ since $e^\alpha-1<2\alpha$.
\end{proof}

\begin{proof}[Proof of Theorem \ref{testing_upperbound}]
The result follows from Lemma \ref{testing_lemma} since for any $P \in \mathcal{P}_\varepsilon(P_0)$, we have 
\[
\mathrm{TV}(P_0, P) = \sup_{A}|P_0(A) - P(A)| = \varepsilon \sup_{A}|P(A) - G(A)| \leq \varepsilon,
\]
and similarly for $P' \in \mathcal{P'}_\varepsilon(P_1)$. Therefore, it holds that 
\begin{align*}
    &\sup_{P \in \mathcal{P}_\varepsilon(P_0)} \mathbb{E}_{P,Q}[\tilde{\phi}] + \sup_{P' \in \mathcal{P}_\varepsilon(P_1)} \mathbb{E}_{P',Q}[(1-\tilde{\phi})] \\
    \leq & \sup_{P : \mathrm{TV}(P,P_0) \leq \varepsilon} \mathbb{E}_{P,Q}[\tilde{\phi}] + \sup_{P' :\mathrm{TV}(P',P_1)\leq \varepsilon} \mathbb{E}_{P',Q}[(1-\tilde{\phi})]\leq 4\exp\Big\{-C\alpha^2n\{\mathrm{TV}(P_0,P_1)-2\varepsilon\}^2\Big\}.
\end{align*}
\end{proof}

\begin{lemma}\label{testing_lemma}
Assume $\mathrm{TV}(P_0,P_1) >2\varepsilon$, and $\alpha \in (0,1)$, then it holds that
\begin{equation}\label{1lem2}
    \sup_{P : \mathrm{TV}(P,P_0) \leq \varepsilon} \mathbb{E}_{P,Q}[\tilde{\phi}] \leq \exp\{-C\alpha^2n\, (\mathrm{TV}(P_0,P_1)-2\varepsilon)^2\} 
\end{equation}
and
\begin{equation}\label{2lem2}
    \sup_{P' :\mathrm{TV}(P',P_1)\leq \varepsilon} \mathbb{E}_{P',Q}[(1-\tilde{\phi})] \leq \exp\{-C\alpha^2n\, (\mathrm{TV}(P_0,P_1)-2\varepsilon)^2\}, 
\end{equation}
where $C >0$ is some absolute constant.
\end{lemma}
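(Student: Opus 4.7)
The plan is to exploit the fact that the randomised response together with the debiasing in the definition of $\widetilde{N}_0$ yields an unbiased estimator of $P(A)$, and then to reduce the testing problem to a concentration inequality for bounded random variables. The first step is to compute $\mathbb{E}_{P,Q}[\widetilde{N}_0/n] = P(A)$. Indeed, conditional on $X_i$, the variable $\mathbbm{1}\{Z_i = 0\}$ equals $\mathbbm{1}\{Y_i = 0\}$ with probability $e^\alpha/(1+e^\alpha)$ and equals $\mathbbm{1}\{Y_i = 1\}$ otherwise, so an elementary calculation shows $\mathbb{E}_{P,Q}[\mathbbm{1}\{Z_i=0\}] = P(A) \cdot e^\alpha/(e^\alpha+1) + (1-P(A))/(e^\alpha+1)$, and the affine rescaling in the definition of $\widetilde{N}_0$ precisely cancels the bias.

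Next I will apply Hoeffding's inequality to the i.i.d.\ sum. The summands $\tfrac{e^\alpha+1}{e^\alpha-1}(\mathbbm{1}\{Z_i=0\} - 1/(e^\alpha+1))$ are bounded random variables lying in an interval of length $(e^\alpha+1)/(e^\alpha-1)$, which for $\alpha \in (0,1)$ is of order $1/\alpha$ (since $e^\alpha - 1 \geq \alpha$ and $e^\alpha + 1 \leq 4$). Hoeffding therefore yields a bound of the form
\[
    \mathbb{P}\bigl( |\widetilde{N}_0/n - P(A)| > t \bigr) \leq 2\exp(-c\, n t^2 \alpha^2)
\]
for some absolute constant $c > 0$.

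The remaining step is to convert this into a Type I/Type II error bound using the Scheff\'e structure. By the definition of $A$ we have $\mathrm{TV}(P_0,P_1) = P_0(A) - P_1(A)$, and since any distribution $P$ with $\mathrm{TV}(P,P_0) \leq \varepsilon$ satisfies $|P(A) - P_0(A)| \leq \varepsilon$, the Type I error probability satisfies
\[
    P(A) - \frac{P_0(A)+P_1(A)}{2} \geq \frac{\mathrm{TV}(P_0,P_1)}{2} - \varepsilon =: t_\ast > 0
\]
under the hypothesis $\mathrm{TV}(P_0,P_1) > 2\varepsilon$. Thus the event $\{\tilde{\phi} = 1\} = \{\widetilde{N}_0/n < (P_0(A)+P_1(A))/2\}$ is contained in $\{\widetilde{N}_0/n - P(A) < -t_\ast\}$, whose probability is bounded by the Hoeffding tail $\exp(-c\, n t_\ast^2 \alpha^2)$, giving \eqref{1lem2}. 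A symmetric argument, using $P'(A) \leq P_1(A) + \varepsilon$ and looking at the event $\{\widetilde{N}_0/n - P'(A) \geq t_\ast\}$, gives \eqref{2lem2}.

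The main work is bookkeeping: tracking the constants coming out of the $(e^\alpha - 1)^{-1}$ factor to verify that the final bound has the clean form $\exp(-C \alpha^2 n (\mathrm{TV}(P_0,P_1) - 2\varepsilon)^2)$ claimed in the lemma, but this is a routine calculation once the restriction $\alpha \in (0,1)$ is used to absorb the $(e^\alpha - 1)/(e^\alpha + 1) \asymp \alpha$ factor into the constant $C$. I do not anticipate any genuinely tricky step — the proof is essentially a concentration inequality for an unbiased estimator plus the Scheff\'e identity translating $\mathrm{TV}(P_0,P_1)$ into a gap between the means of $\mathbbm{1}\{Z_i=0\}$ under the two hypotheses.
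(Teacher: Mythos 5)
Your proof is correct and follows essentially the same route as the paper's: establish that the debiased randomised-response average $\widetilde{N}_0/n$ is an unbiased estimator of $P(A)$, apply Hoeffding's inequality to the bounded i.i.d.\ summands (with the $\alpha^2$ arising from the $(e^\alpha+1)/(e^\alpha-1) \asymp 1/\alpha$ range), and use $\mathrm{TV}(P_0,P_1) = P_0(A) - P_1(A)$ together with $|P(A) - P_0(A)| \leq \varepsilon$ to obtain the deviation gap $t_\ast = \mathrm{TV}(P_0,P_1)/2 - \varepsilon$. The only cosmetic difference is that the paper expresses the Hoeffding step in terms of a sub-Gaussian variance proxy for $\widetilde{N}_0/n$ rather than the interval length of the individual summands, but the resulting bound and constant structure are the same.
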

\begin{proof}[Proof of Lemma \ref{testing_lemma}]
We only prove \eqref{1lem2} since \eqref{2lem2} follows using the same arguments. For any $P$ such that $\mathrm{TV}(P,P_0)\leq \varepsilon$, we have
\begin{align*}
    \mathbb{E}_{P,Q}[\tilde{\phi}] &= \mathbb{P}\left(|\tilde{N}_0/n - P_0(A)|>|\tilde{N}_0/n - P_1(A)|\right)\\
    & = \mathbb{P} \left( 2 \{\widetilde{N}_0/n - P(A) \} < P_0(A) + P_1(A) - 2P(A) \right) \\
    & \leq \mathbb{P} \left( 2 \{\widetilde{N}_0/n - P(A) \} < -\mathrm{TV}(P_0,P_1) + 2 \varepsilon \right),
\end{align*}
where the last line is due to $\mathrm{TV}(P_0,P_1) = P_0(A)-P_1(A)$. 

Note that 
\[
    \mathbb{E}_{P,Q}\Big[\frac{\tilde{N}_0}{n}\Big] =  \frac{e^\alpha+1}{e^\alpha-1}\left(\frac{\mathbb{E}_{P,Q}[\hat{N}_0]}{n}-\frac{1}{e^\alpha+1}\right) = P(A),
\]
since 
\[
\mathbb{E}_{P,Q}\Big[\frac{\hat{N}_0}{n}\Big] = \frac{e^\alpha}{e^\alpha+1}\mathbb{P}(Y_1 = 0)+\frac{1}{e^\alpha+1}\mathbb{P}(Y_1 = 1) = \frac{1}{e^\alpha+1}\Big\{1+(e^\alpha-1)P(A)\Big\}.
\]
Therefore, for some absolute constants $C_0$ and $C$, we have when $\mathrm{TV}(P_0,P_1) >2\varepsilon$, 
\begin{align*}
   \mathbb{E}_{P,Q}[\tilde{\phi}] 
    & \leq \mathbb{P} \left( \widetilde{N}_0/n - P(A)  < -\mathrm{TV}(P_0,P_1)/2 + \varepsilon \right) \\
 & \leq \exp\left\{-C_0\left(\frac{e^\alpha-1}{e^\alpha+1}\right)^2 n\, (\mathrm{TV}(P_0,P_1)/2-\varepsilon)^2 \right\}  \\
 & \leq \exp\{-C\alpha^2n\, (\mathrm{TV}(P_0,P_1)-2\varepsilon)^2\},
\end{align*}
where 
we apply Hoeffding's inequality \citep[e.g.\ Proposition 2.5 in][]{wainwright2019high} in the second line, since $\tilde{N}_0/n$ is a sub-Gaussian random variable with variance proxy $\sigma^2 \leq n^{-1}(e^\alpha+1)^2(e^\alpha-1)^{-2}$. 
\end{proof}

We conclude this section by giving details of the calculations presented in Remark~\ref{Rmk:TighterTesting}. Under~$\mathrm{H}_0$ we have that
\begin{align*}
    & \mathbb{E} (\widetilde{N}_0 /n) - \frac{1-\varepsilon}{2}\{P_0(A)+P_1(A)\} - \frac{\varepsilon}{2} = P(A)  - \frac{1-\varepsilon}{2}\{P_0(A)+P_1(A)\} - \frac{\varepsilon}{2} \\ 
    \geq & (1-\varepsilon)P_0(A) - \frac{1-\varepsilon}{2}\{P_0(A)+P_1(A)\} - \frac{\varepsilon}{2} = \frac{1-\varepsilon}{2} \mathrm{TV}(P_0,P_1) - \frac{\varepsilon}{2}.
\end{align*}
Similarly, under $\mathrm{H}_1$ we have
\begin{align*}
    & \mathbb{E} (\widetilde{N}_0 /n) - \frac{1-\varepsilon}{2}\{P_0(A)+P_1(A)\} - \frac{\varepsilon}{2} \\
    \leq & (1-\varepsilon)P_1(A) + \varepsilon - \frac{1-\varepsilon}{2}\{P_0(A)+P_1(A)\} - \frac{\varepsilon}{2} = \frac{\varepsilon}{2} - \frac{1-\varepsilon}{2} \mathrm{TV}(P_0,P_1).
\end{align*}
Using Hoeffding's inequality as in the proof of Lemma~\ref{testing_lemma} we see that, under $\mathrm{H}_0$,
\[
    \mathbb{P} \left( \widetilde{N}_0/n < \frac{1-\varepsilon}{2}\{P_0(A)+P_1(A)\} +  \frac{\varepsilon}{2} \right) \leq \exp\left\{ - C \alpha^2 n (1-\varepsilon)^2\{ \mathrm{TV}(P_0,P_1) - \varepsilon/(1-\varepsilon)\}_+^2 \right\}.
\]
Combining with an analogous bound under $\mathrm{H}_1$, we see that
\[
    \mathcal{R}_{n,\alpha}(\varepsilon) \leq 2\exp\left\{ - C \alpha^2 n (1-\varepsilon)^2\{ \mathrm{TV}(P_0,P_1) - \varepsilon/(1-\varepsilon)\}_+^2 \right\},
\]
and we can remove the $(1-\varepsilon)^2$ factor by noting that $\mathcal{R}_{n,\alpha}(\varepsilon)=1$ for $\varepsilon >1/2$, so we may restrict attention to $\varepsilon \leq 1/2$.

\subsection{R\'enyi Differential Privacy} \label{Sec:RDP}

In this section we consider a family of more general notions of local privacy and show that the TV distance is still the natural measure of difficulty in the simple testing problem for each constraint in the family. Thus the TV distance  arises more generally than just for LDP. There is no obvious link between the definition of the privacy constraints and the TV distance.

Following \cite{mironov2017renyi} and \cite{duchi2018right} we consider the family of local privacy constraints known as R\'enyi local differential privacy. For $\gamma \geq 2$ and distributions $P,Q$ define the $\gamma$-R\'enyi divergence
\[
    D_\gamma(P || Q) = \frac{1}{\gamma-1} \log \int \Bigl( \frac{dP}{dQ} \Bigr)^\gamma \,\mathrm{d}Q.
\]
We then say that a privacy mechanism $Q$ is $(\gamma,\alpha^2)$-R\'enyi locally differentially private (RDP) if for all $x,x' \in \mathcal{X}$ and $z_{1:(i-1)} \in \mathcal{Z}^{i-1}$ we have
\begin{equation}\label{eq-def-renyi}
     D_\gamma \bigl( Q(\cdot | x, z_{1:(i-1)}), Q(\cdot | x', z_{1:(i-1)}) \bigr) \leq \alpha^2.
\end{equation}
By the monotonicity of R\'enyi divergences \citep[e.g.~Theorem 3 in][]{van2014renyi} we have that $(\gamma,\alpha^2)$-RDP implies $(\gamma',\alpha^2)$-RDP for any $\gamma'\leq\gamma$. Hence, lower bounds over all $(2,\alpha^2)$-RDP mechanisms imply lower bounds over all $(\gamma,\alpha^2)$-RDP mechanisms for any $\gamma \geq 2$.  We would like to point out that the limit $\lim_{\gamma \to \infty}D_{\gamma}$ is the logarithm of the essential supremum of $dP/dQ$, and the limiting privacy constraint is therefore an LDP constraint \citep[e.g.~Theorem 6 in][]{van2014renyi}.  This explains that for $\gamma \in [2, \infty)$, ($\gamma, \alpha$)-RDP is a less stringent constraint than the $\alpha$-LDP constraint.  The $\alpha$ dependence can be tightened by noting that $\alpha$-LDP implies $(\gamma,2\gamma \alpha^2)$-RDP for any $\gamma \geq 1$ \citep{duchi2018right}.

For the robust testing problem we considered in \Cref{sec1}, we replace the LDP with the more general RDP and provide the following results. 

\begin{prop}
Let $\mathcal{R}_{n,\gamma,\alpha}(\varepsilon)$ be the minimax testing risk for robust testing problem defined in \eqref{eq-def-test-minimax-risk} in \Cref{sec1}, with $\mathcal{Q}_\alpha$ denoting all $(\gamma,\alpha^2)$-RDP privacy mechanisms \eqref{eq-def-renyi} with $\gamma \geq 2$.  For $\alpha \in (0, 1)$ and the robust testing problem \eqref{testing_p}, it holds that 
\[
    \mathcal{R}_{n,\gamma,\alpha}(\varepsilon) \geq \exp\bigl( - 4n \alpha^2 (1-\varepsilon)^2 \{\mathrm{TV}(P_0,P_1) - \varepsilon/(1-\varepsilon)\}_+^2 \bigr).
\]
\end{prop}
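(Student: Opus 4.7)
My plan is to closely follow the argument of Proposition~\ref{testing_lowerbound}, replacing only Duchi's KL-contraction bound tailored to $\alpha$-LDP mechanisms with an RDP analogue. For any $M_0 \in \mathcal{P}_\varepsilon(P_0)$, $M_1 \in \mathcal{P}_\varepsilon(P_1)$ and any $(\gamma, \alpha^2)$-RDP mechanism $Q$, the same Le Cam chain as in Proposition~\ref{testing_lowerbound} gives
\[
    \mathcal{R}_{n,\gamma,\alpha}(\varepsilon) \geq 1 - \mathrm{TV}(QM_0^n, QM_1^n) \geq \exp\{-\mathrm{KL}(QM_0^n, QM_1^n)\},
\]
and Lemma~\ref{2eps} lets me choose $(M_0, M_1)$ attaining $\mathrm{TV}(M_0, M_1) = (1-\varepsilon)\{\mathrm{TV}(P_0, P_1) - \varepsilon/(1-\varepsilon)\}_+$ (the bound being trivial when the right-hand side vanishes). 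The task therefore reduces to establishing the RDP KL contraction $\mathrm{KL}(QM_0^n, QM_1^n) \leq 4 n \alpha^2 \mathrm{TV}(M_0, M_1)^2$.

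Since the R\'enyi divergence is monotone in its order, every $(\gamma, \alpha^2)$-RDP mechanism with $\gamma \geq 2$ is also $(2, \alpha^2)$-RDP, which converts the privacy constraint into the pointwise chi-squared bound
\[
    \chi^2\bigl(Q_i(\cdot | x, z_{1:i-1}), Q_i(\cdot | x', z_{1:i-1})\bigr) = e^{D_2} - 1 \leq e^{\alpha^2} - 1 \leq 2\alpha^2
\]
for every prefix $z_{1:i-1}$ and inputs $x, x'$, using $\alpha \in (0,1)$. By the KL chain rule applied to sequentially interactive mechanisms, it suffices to bound, conditionally on each prefix, the single-step KL between the conditional marginals $m_i^j(\cdot | z_{1:i-1}) = \int Q_i(\cdot | x, z_{1:i-1}) \, M_j(\mathrm{d}x)$ for $j = 0, 1$.

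For the single-step bound, I would use the Hahn-type decomposition $M_j = (1-t)\mu + t \nu_j$, where $t = \mathrm{TV}(M_0, M_1)$, $\mu = (M_0 \wedge M_1)/(1-t)$ and $\nu_j = (M_j - M_{1-j})_+/t$. This induces $m_i^j = (1-t) \bar m_i + t \tilde m_i^j$, with $\bar m_i, \tilde m_i^j$ defined analogously. Combining $\mathrm{KL} \leq \chi^2$ with the identity $m_i^0 - m_i^1 = t(\tilde m_i^0 - \tilde m_i^1)$ and the lower bound $m_i^1 \geq (1-t)\bar m_i$ reduces the task to controlling $\int (\tilde m_i^0 - \tilde m_i^1)^2 / \bar m_i \, \mathrm{d}z$. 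Rewriting both $\tilde m_i^j$ and $\bar m_i$ as mixtures against the common product measure $\nu_j \otimes \mu$ allows joint convexity of $\chi^2$ to bound each of $\chi^2(\tilde m_i^j, \bar m_i)$ by the per-conditional RDP bound displayed above. Summing over $i$ then yields $\mathrm{KL}(QM_0^n, QM_1^n) \lesssim n\alpha^2 t^2$, and substituting the value of $t$ closes the argument.

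The main technical obstacle is extracting the precise leading constant $4$ rather than the softer $\lesssim$ that a naive triangle-type expansion $(\tilde m_i^0 - \tilde m_i^1)^2 \leq 2(\tilde m_i^0 - \bar m_i)^2 + 2(\tilde m_i^1 - \bar m_i)^2$ produces; tracking the sign of the cross term in $((\tilde m_i^0 - \bar m_i) - (\tilde m_i^1 - \bar m_i))^2$ and treating separately the (trivial) regime $t \geq 1/2$, where the stated bound follows from $\mathcal{R}_{n,\gamma,\alpha}(\varepsilon) \geq e^{-n\alpha^2}$, should be sufficient to tighten the constant to the one claimed.
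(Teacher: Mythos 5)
You follow the same overall Le Cam chain and the same choice of $(M_0,M_1)$ via Lemma~\ref{2eps} as in the paper's proof; the only substantive difference is that the paper obtains the contraction inequality $\mathrm{KL}(QM_0^n\|QM_1^n) \leq 4n\alpha^2\mathrm{TV}^2(M_0,M_1)$ directly by citing Corollary~11 of \cite{duchi2018right}, whereas you attempt a self-contained re-derivation. Your preliminary steps are sound: monotonicity of R\'enyi divergences reduces to $\gamma=2$, $(2,\alpha^2)$-RDP translates into the pointwise bound $\chi^2(Q_i(\cdot|x,\cdot),Q_i(\cdot|x',\cdot)) \leq e^{\alpha^2}-1 \leq 2\alpha^2$, and rewriting $\tilde m_i^j$ and $\bar m_i$ as mixtures over a common product measure so that joint convexity of $\chi^2$ applies correctly yields $\chi^2(\tilde m_i^j,\bar m_i) \leq e^{\alpha^2}-1$ per step. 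This does give a contraction of the right \emph{order} $\mathrm{KL} \lesssim n\alpha^2\,\mathrm{TV}^2(M_0,M_1)$.

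The genuine gap is in recovering the constant $4$. The plan to ``track the sign of the cross term'' cannot work without further structure: the cross term $\int (\tilde m^0 - \bar m)(\tilde m^1 - \bar m)/\bar m$ is the $L^2(\bar m)$ inner product of two mean-zero functions, and its sign depends on the mechanism. For instance, with $\mathcal Z = \{0,1\}$, $Q(0\,|\,a)=0.6$, $Q(0\,|\,b)=0.4$, $Q(0\,|\,c)=0.5$, and $\nu_0=\delta_a$, $\nu_1=\delta_b$, $\mu=\delta_c$, the inner product equals $-0.04<0$, so the cross term \emph{worsens} the bound. Thus you are stuck with the triangle/Cauchy--Schwarz bound $\int(\tilde m^0-\tilde m^1)^2/\bar m \leq \bigl(\sqrt{\chi^2(\tilde m^0,\bar m)}+\sqrt{\chi^2(\tilde m^1,\bar m)}\bigr)^2 \leq 4(e^{\alpha^2}-1)$, and together with $m_i^1 \geq (1-t)\bar m_i$ this gives a per-step constant around $4(e^{\alpha^2}-1)/(1-t) > 4\alpha^2$, i.e.\ roughly $8\alpha^2$ or worse even for $t\leq 1/2$. (You can drop the $1/(1-t)$ by applying Cauchy--Schwarz in the numerator and joint convexity of $(a,b)\mapsto a^2/b$ directly to $m_i^1$, but the factor $4(e^{\alpha^2}-1)>4\alpha^2$ persists for every $\alpha\in(0,1]$.) Your fallback for $t\geq 1/2$ does close, but requires a justification you haven't stated: joint convexity of $\chi^2$ over any coupling of $M_0,M_1$ gives $\chi^2(m_i^0,m_i^1)\leq\max_{x,x'}\chi^2(Q_i(\cdot|x,\cdot),Q_i(\cdot|x',\cdot))\leq e^{\alpha^2}-1$, whence $\mathrm{KL}(m_i^0,m_i^1)\leq D_2(m_i^0,m_i^1)=\log(1+\chi^2(m_i^0,m_i^1))\leq\alpha^2$, so $\mathcal R_{n,\gamma,\alpha}(\varepsilon)\geq e^{-n\alpha^2}$ always. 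For $t<1/2$, however, the constant discrepancy remains. If a larger universal constant in the exponent is acceptable your route suffices; to obtain the exact constant $4$, cite Corollary~11 of \cite{duchi2018right} as the paper does.
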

\begin{proof}
As discussed above, it suffices to study $\mathcal{R}_{n,2,\alpha}(\varepsilon)$ since we have that $\mathcal{R}_{n,\gamma,\alpha}(\varepsilon) \geq \mathcal{R}_{n,2,\alpha}(\varepsilon)$ whenever $\gamma \geq 2$. If $Q$ is any sequentially-interactive $(2,\alpha^2)$-RDP mechanism then, by Corollary~11 of \cite{duchi2018right}, we have
\[
    \mathrm{KL}( QM_0^n || QM_1^n) \leq 4 n \alpha^2 \mathrm{TV}(M_0,M_1)^2,
\]
where we use the same notation as in the proof of \Cref{testing_lowerbound}. Using the above inequality in the proof of \Cref{testing_lowerbound} we immediately have
\[
    \mathcal{R}_{n,\gamma,\alpha^2}(\varepsilon) \geq \inf_{Q \in \mathcal{Q}_{\gamma,\alpha}} \exp \{ - \mathrm{KL}( QM_0^n || QM_1^n)\} \geq \exp\{ - 4n \alpha^2 \mathrm{TV}(M_0,M_1)\}.
\]
Choosing $M_0,M_1$ as before we have the lower bound
\[
    \mathcal{R}_{n,\gamma,\alpha^2}(\varepsilon) \geq \exp \bigl( - 4n \alpha^2 (1-\varepsilon)^2 \{\mathrm{TV}(P_0,P_1) - \varepsilon/(1-\varepsilon)\}_+^2 \bigr)
\]
as claimed.
\end{proof}

In \Cref{prop-renyi_upperbound} below, we show that the exponent in the lower bound is in fact optimal up to a factor depending only on $\gamma$.
\begin{prop}\label{prop-renyi_upperbound}
Let $\mathcal{R}_{n,\gamma,\alpha}(\varepsilon)$ be the minimax testing risk for robust testing problem defined in \eqref{eq-def-test-minimax-risk} in \Cref{sec1}, with $\mathcal{Q}_\alpha$ denoting all $(\gamma,\alpha^2)$-RDP privacy mechanisms \eqref{eq-def-renyi} with $\gamma \geq 2$.  For $\alpha \in (0, 1)$ and the robust testing problem \eqref{testing_p}, it holds that
\[
    \mathcal{R}_{n,\gamma,\alpha}(\varepsilon) \leq \exp\bigl( - c_\gamma n \alpha^2 \{\mathrm{TV}(P_0,P_1) - \varepsilon/(1-\varepsilon)\}_+^2 \bigr),
\]
where $c_\gamma>0$ is a constant depending only on $\gamma$.
\end{prop}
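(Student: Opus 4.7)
The plan is to reduce the problem to the LDP setting already analysed in Theorem~\ref{testing_upperbound} and Remark~\ref{Rmk:TighterTesting} via the RDP--LDP correspondence. Specifically, as noted in Section~\ref{Sec:RDP}, any $\alpha'$-LDP mechanism is automatically $(\gamma, 2\gamma(\alpha')^2)$-RDP for every $\gamma \geq 1$. Therefore, choosing $\alpha' = \alpha/\sqrt{2\gamma}$ produces an LDP mechanism that simultaneously meets the $(\gamma,\alpha^2)$-RDP constraint. Since $\alpha \in (0,1)$ and $\gamma \geq 2$, we have $\alpha' \in (0,1]$, placing us in the regime covered by the earlier analyses.

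Next, I would deploy the refined test $\phi'$ from Remark~\ref{Rmk:TighterTesting} with privacy parameter $\alpha'$: privatise via the randomised response mechanism \eqref{eq-private-mechanism-test} with $\alpha$ replaced by $\alpha'$, and compare the debiased count $\widetilde{N}_0/n$ against the critical value $(1-\varepsilon)\{P_0(A)+P_1(A)\}/2 + \varepsilon/2$. By the calculation detailed at the end of Section~\ref{appendix:testing}, this test satisfies
\[
    \sup_{P \in \mathcal{P}_\varepsilon(P_0)} \mathbb{E}_{P,Q}[\phi'] + \sup_{P' \in \mathcal{P}_\varepsilon(P_1)} \mathbb{E}_{P',Q}[1-\phi'] \leq 2 \exp\bigl[ -C'(\alpha')^2 n \{\mathrm{TV}(P_0,P_1) - \varepsilon/(1-\varepsilon)\}_+^2 \bigr],
\]
where $C'$ is the absolute constant from that remark. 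Substituting $(\alpha')^2 = \alpha^2/(2\gamma)$ and absorbing the harmless factor of $2$ into the exponential constant (using the trivial bound $\mathcal{R}_{n,\gamma,\alpha}(\varepsilon) \leq 1$ whenever the exponent itself is small), I obtain the claimed inequality with $c_\gamma$ of order $1/\gamma$.

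There is no substantive new analysis required; the only thing to verify is that the constants appearing in Remark~\ref{Rmk:TighterTesting} and in the RDP--LDP conversion are absolute, so that the dependence on $\gamma$ is fully captured by the factor $1/(2\gamma)$. If one instead wanted to avoid the LDP detour, a direct construction would be to take a binary randomised response with flip probability $p = (1+r)/2$ and verify, via a second-order expansion, that $D_\gamma(\mathrm{Ber}(p) \| \mathrm{Ber}(1-p)) = 2\gamma r^2 + O(r^3)$, so that the choice $r \asymp \alpha/\sqrt{\gamma}$ achieves the RDP constraint with essentially the same effective noise level as in the LDP route; this alternative yields the same bound up to constants. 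The only mild obstacle either way is bookkeeping to ensure that no hidden $\gamma$- or $\alpha$-dependence leaks into the final constant beyond the advertised $c_\gamma = \Theta(1/\gamma)$.
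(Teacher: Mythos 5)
Your proposal is correct and takes a genuinely different route from the paper. The paper constructs the randomised response mechanism directly with parameter $p=(1+\eta)/2$, invokes Proposition~5 of \cite{mironov2017renyi} to express the exact $\gamma$-R\'enyi divergence $\frac{1}{\gamma-1}\log\bigl(p^\gamma(1-p)^{1-\gamma}+(1-p)^\gamma p^{1-\gamma}\bigr)$, and then carries out the Taylor expansion in $\eta$ to show it is $\sim 2\gamma\eta^2$, choosing $\eta = \alpha/C_\gamma$ accordingly. You instead treat the RDP--LDP implication ``$\alpha'$-LDP $\Rightarrow (\gamma,2\gamma(\alpha')^2)$-RDP'' from \cite{duchi2018right} as a black box, set $\alpha'=\alpha/\sqrt{2\gamma}$, and simply call Remark~\ref{Rmk:TighterTesting} at privacy level $\alpha'$. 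Your approach is shorter and cleanly modular -- all privacy-specific computation is pushed into the conversion lemma -- at the possible cost of a slightly worse absolute constant, since that conversion is not tight (the direct expansion gives roughly $\gamma\eta^2/2$, so the conversion is loose by a factor of about four). The alternative you sketch at the end of your proposal (direct second-order expansion of the Bernoulli R\'enyi divergence) is in fact exactly what the paper does. You are also right that the only point requiring care is that no $\gamma$- or $\alpha$-dependence leaks into the constants from Remark~\ref{Rmk:TighterTesting}: inspection of the Hoeffding bound there confirms $C'$ is absolute once $\alpha' \leq 1$, which your choice $\alpha'=\alpha/\sqrt{2\gamma}\leq 1/\sqrt{2\gamma}<1$ guarantees.

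One small caveat: the step of ``absorbing the harmless factor of $2$'' by appealing to $\mathcal{R}_{n,\gamma,\alpha}(\varepsilon)\leq 1$ is not fully watertight as stated, since for intermediate values of the exponent (roughly $0 < x < 2\log 2$) neither $\min(1,2e^{-x})\leq e^{-x/2}$ nor the trivial bound gives $e^{-cx}$ directly. The paper's own proof ends with the same leading factor of $2$ and asserts the factor-free statement ``as claimed'', so this is a shared bookkeeping gap rather than a flaw specific to your argument -- the cleanest fix is to leave the factor of $2$ in the displayed bound or replace $\leq$ by $\lesssim$ in the proposition.
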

\begin{proof}
We use a privatised Scheff\'e test as for differential privacy. Using similar notation as in~\eqref{eq-private-mechanism-test} and the randomised response mechanism with parameter $p \in (1/2,1)$, i.e.
\[
    Z_i = \begin{cases}
            Y_i, & U_i \leq p, \\
            1 - Y_i, & \mbox{otherwise}.
        \end{cases},
\]
we use the test
\[
    \phi' = \mathbbm{1} \bigl\{ 2\tilde{N}_0 /n < (1-\varepsilon)\{P_0(A) + P_1(A)\} + \varepsilon \bigr\}.
\]
With almost exactly the same proof as before, the risk of this test is bounded above by
\begin{equation}
\label{Eq:RRRisk}
    2 \exp\bigl[ -C (2p-1)^2 n \{\mathrm{TV}(P_0,P_1) - \varepsilon/(1-\varepsilon)\}_+^2 \bigr]
\end{equation}
where $C>0$ is a universal constant, provided $p \leq 3/4$, say. As previously, if $\varepsilon$ is unknown we can use a simplified test at the cost of replacing $\varepsilon/(1-\varepsilon)$ by $2 \varepsilon$ in the upper bound. Now, by Proposition~5 of \cite{mironov2017renyi}, this mechanism satisfies
\[
    \Bigl(\gamma, \frac{1}{\gamma-1} \log \bigl( p^\gamma (1-p)^{1-\gamma} + (1-p)^{\gamma} p^{1-\gamma } \bigr) \Bigr)\text{-RDP}.
\]
It remains to choose $p$ such that the mechanism satisfies $(\gamma,\alpha^2)$-RDP and plug this choice of $p$ into~\eqref{Eq:RRRisk}. Say that $p=(1+\eta)/2$ for some small $\eta>0$. Then
\begin{align*}
    &\frac{1}{\gamma-1} \log \bigl( p^\gamma (1-p)^{1-\gamma} + (1-p)^{\gamma} p^{1-\gamma } \bigr) \\
    &= \frac{1}{\gamma -1 } \log \biggl( \frac{1}{2} \biggl\{ (1-\eta) \Bigl( \frac{1+\eta}{1-\eta} \Bigr)^\gamma + (1+\eta)  \Bigl( \frac{1+\eta}{1-\eta} \Bigr)^{-\gamma} \biggr\} \biggr) \\
    & \leq \frac{1}{2(\gamma-1)} \biggl[ (1-\eta) \biggl\{  \Bigl( \frac{1+\eta}{1-\eta} \Bigr)^\gamma -1 \biggr\} + (1+\eta) \biggl\{  \Bigl( \frac{1+\eta}{1-\eta} \Bigr)^{-\gamma} -1 \biggr\} \biggr] \\
    & = \frac{1}{2(\gamma-1)} \biggl[ (1-\eta) \biggl\{  \Bigl( \frac{1+\eta}{1-\eta} \Bigr)^\gamma -1 - 2 \gamma \eta \biggr\} + (1+\eta) \biggl\{  \Bigl( \frac{1+\eta}{1-\eta} \Bigr)^{-\gamma} -1 + 2 \gamma \eta  \biggr\} - 4 \gamma \eta^2 \biggr] \\
    & \sim 2 \gamma \eta^2
\end{align*}
as $\eta \searrow 0$. It is now clear that there exists $C_\gamma$ such that
\[
    \frac{1}{\gamma-1} \log \bigl( p^\gamma (1-p)^{1-\gamma} + (1-p)^{\gamma} p^{1-\gamma } \bigr) \leq C_\gamma^2 \eta^2
\]
for all $\eta \in [0,1/2]$. Hence, if $p=1/2+\alpha/(2C_\gamma)$ the mechanism satisfies $(\gamma,\alpha^2)$-RDP. Thus
\[
    \mathcal{R}_{n,\gamma,\alpha}(\varepsilon) \leq 2 \exp \bigl[ - (C/C_\gamma^2) n \alpha^2 \{\mathrm{TV}(P_0,P_1) - \varepsilon/(1-\varepsilon)\}_+^2 \bigr]
\]
as claimed.
\end{proof}

\section[]{Technical details of \Cref{sec2}}\label{app-c}

\subsection[]{Proofs of results in \Cref{sec2}}

\begin{proof}[Proof of \Cref{mean_lowerbound}]

In light of \Cref{general_lower}, it suffices to consider the minimax risk without contamination, denoted as $\mathcal{R}_{n, \alpha}(0)$, and $\omega^2(\varepsilon)$, defined in \eqref{eq-TV-modulus-conti}, respectively. 

\medskip
\noindent \textbf{Step 1.}  In this step, we are to lower bound $\mathcal{R}_{n, \alpha}(0)$ in two aspects, with and without the effect of $D$, respectively.

Recalling the total variation modulus of continuity $\omega(\varepsilon)$, for $\eta > 0$, let
\begin{equation}\label{eq-omega-prime}
    \omega'(\eta) = \omega\left(\frac{\eta}{1+\eta}\right) = \sup\{|\theta(R_0') - \theta(R_1')|: \, \mathrm{TV}(R_0', R_1') \leq \eta,\, R_0', \, R_1' \in \mathcal{P}_k\},
\end{equation}
where $\theta(\cdot)$ denotes the expectation.

Consider the following distributions $R_0$ and $R_1$.  Let 
\[
    R_0(\{D-1\}) = 1 \quad \mbox{and} \quad \begin{cases}
        R_1(\{D-1\}) = 1-\eta, \\
        R_1\left(\{D-1+\left(2\eta\right)^{-1/k}\}\right) = \eta, 
    \end{cases} \, \eta \in (0, 1),
\]
which satisfy that $R_0, R_1 \in \mathcal{P}_k$, defined in \eqref{eq-P-k-dist-def}.  To be specific, as for $R_0$, it holds that $\mu_0 = \mathbb{E}_{R_0}(X) = D-1 \in [-D,D]$ and $\sigma_0^k = \mathbb{E}_{R_0}[|X-\mu_0|^k] = 0 < 1$; as for $R_1$, it holds that 
\[
    \mu_1 = \mathbb{E}_{R_1}(X) = (1-\eta)(D-1)+\eta \left(\{D-1+\left(2\eta\right)^{-1/k}\}\right) = D-1+ 2^{-1/k}\eta^{1-1/k} \leq D
\]
and 
\begin{align*}
    \sigma_1^k &= \mathbb{E}_{R_1}[|X-\mu_1|^k] = (1-\eta) \Big(2^{-1/k}\eta^{1-1/k}\Big)^k + \eta \left\{(2\eta)^{-1/k} - (2^{-1/k}\eta^{1-1/k}\right\}^k\\
    & = 2^{-1}(1-\eta)\eta^{k-1} + 2^{-1}(1-\eta)^k \leq 1,
\end{align*}
since $\eta<1<2^{1/k}$.
  
Notice that $\mathrm{TV}(R_0,R_1) = \eta$ by construction and $|\mu_1-\mu_0| = 2^{-1/k}\eta^{1-1/k}$.  We therefore have, due to the definition \eqref{eq-omega-prime}, that 
\begin{equation}\label{eq-omega-prime-lower-bound}
    \omega'(\eta) \geq 2^{-1/k}\eta^{1-1/k}.    
\end{equation}
Due to Corollary 3.1 in \cite{rohde2020geometrizing}, we have that for $n$ larger than some absolute constant $n_0$, 
\begin{equation}\label{eq-prop-4-pf-1}
    \mathcal{R}_{n,\alpha}(0) \geq C_1\left\{\omega'(C_0(n\alpha^2)^{-1/2})\right\}^2 \gtrsim (n\alpha^2)^{1/k-1}, 
\end{equation}
where $C_0, C_1 > 0$ are absolute constants.  

It follows from \Cref{tomnotes}, we have that 
\begin{equation}\label{eq-prop-4-pf-2}
    \mathcal{R}_{n,\alpha}(0) \geq \frac{D^2}{32\exp{(64n\alpha^2)}}.
\end{equation}

Combining \eqref{eq-prop-4-pf-1} and \eqref{eq-prop-4-pf-2}, we have that 
\begin{equation}\label{eq-prop-4-pf-3}
    \mathcal{R}_{n,\alpha}(0) \gtrsim (n\alpha^2)^{1/k-1} \vee \frac{D^2}{\exp{(64n\alpha^2)}}.
\end{equation}

\medskip
\noindent \textbf{Step 2.}  To apply \Cref{general_lower}, it suffices to lower bound $\omega^2(\varepsilon/2)$.  We have that
\begin{equation}\label{eq-prop-4-pf-4}
    \omega^2(\varepsilon/2) = \omega'\left(\frac{\varepsilon}{2 - \varepsilon}\right) \gtrsim \varepsilon^{2 - 2/k},
\end{equation}
where the inequality is due to \eqref{eq-omega-prime-lower-bound}.

Combining \eqref{eq-prop-4-pf-3} and \eqref{eq-prop-4-pf-4}, in light of \Cref{general_lower}, we complete the proof.
\end{proof}

\begin{lemma}\label{tomnotes}
Under the same settings as those in \Cref{mean_lowerbound} and assuming $\varepsilon = 0$, it holds that 
\[
\mathcal{R}_{n,\alpha}(0) \geq \frac{D^2}{32\exp{(64n\alpha^2)}} 
\]
\end{lemma}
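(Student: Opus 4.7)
The plan is to apply a classical two-point Le Cam argument, but to extract the exponential factor $e^{-c n \alpha^2}$ using the Bretagnolle--Huber inequality rather than Pinsker's inequality. Pinsker's inequality would only yield a bound of the form $D^2\{1 - C\sqrt{n\alpha^2}\}_+$, which is vacuous as soon as $n\alpha^2$ is moderately large, whereas Bretagnolle--Huber keeps an exponentially small but non-zero lower bound on $1 - \mathrm{TV}$ for every $n\alpha^2$, producing exactly the denominator $\exp(cn\alpha^2)$ that appears in the statement.

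First, I would choose the two point masses $P_0 = \delta_{-D}$ and $P_1 = \delta_{D}$. Both lie in $\mathcal{P}_k$ because each has mean in $[-D,D]$ and $k$-th central moment equal to $0 \leq 1$, and the target functional satisfies $|\mu_0 - \mu_1| = 2D$. Writing $M_j$ for the joint law of the privatised sample $(Z_1,\dotsc,Z_n)$ when $X_1,\dotsc,X_n \sim P_j$ i.i.d.\ under some $Q \in \mathcal{Q}_\alpha$, a standard reduction to testing gives
\[
    \mathcal{R}_{n,\alpha}(0) \;\geq\; \frac{(\mu_0-\mu_1)^2}{8}\,\bigl\{1-\mathrm{TV}(M_0,M_1)\bigr\} \;=\; \frac{D^2}{2}\,\bigl\{1-\mathrm{TV}(M_0,M_1)\bigr\}.
\]

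Next, I would bound $\mathrm{TV}(M_0,M_1)$ through a KL-contraction argument. By the locally private KL contraction inequality (Corollary~3 of \cite{duchi2018minimax}), which covers sequentially interactive mechanisms, together with $\mathrm{TV}(P_0,P_1)=1$ and $e^\alpha -1 \leq 2\alpha$ for $\alpha \in (0,1]$,
\[
    \mathrm{KL}(M_0,M_1) \;\leq\; 4n(e^\alpha-1)^2\,\mathrm{TV}^2(P_0,P_1) \;\leq\; 16\,n\alpha^2.
\]
Applying Bretagnolle--Huber then converts this into
\[
    1 - \mathrm{TV}(M_0,M_1) \;\geq\; \tfrac{1}{2}\exp\bigl(-\mathrm{KL}(M_0,M_1)\bigr) \;\geq\; \tfrac{1}{2}\exp(-16 n\alpha^2),
\]
so that $\mathcal{R}_{n,\alpha}(0) \geq (D^2/4)\exp(-16 n\alpha^2)$. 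This is already stronger than the claimed $D^2/\{32\exp(64 n\alpha^2)\}$, and taking an infimum over $Q \in \mathcal{Q}_\alpha$ on the left-hand side completes the proof; the looser constants $32$ and $64$ in the statement can equivalently be obtained by carrying cruder constants through the same chain.

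The only non-routine step is the decision to pass from KL to TV through Bretagnolle--Huber rather than Pinsker, since this is what turns a KL bound of order $n\alpha^2$ into an exponential $\exp(-cn\alpha^2)$ lower bound on $1-\mathrm{TV}$. The specific choice of two point masses is also important: it maximises the non-private separation $|\mu_0-\mu_1| \asymp D$ while keeping $\mathrm{TV}(P_0,P_1)=1$, so that the LDP contraction yields a KL bound that is linear in $n\alpha^2$ and crucially \emph{independent of $D$}, which is what allows $D^2$ to appear cleanly in the numerator.
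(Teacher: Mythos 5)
Your proof is correct, but it takes a genuinely different route from the paper. The paper proves Lemma~\ref{tomnotes} via a locally private Fano argument: it constructs a $\delta$-packing of $[-D,D]$, bounds the mutual information $I(Z_1^n;J)$ by $4n(e^\alpha-1)^2$ using equation (15) of \cite{duchi2018minimax}, applies Fano's inequality, and then chooses $\delta$ so small (of order $D \exp(-cn\alpha^2)$) that the packing number $M(\delta)\asymp D/\delta$ is large enough for the Fano factor to be bounded below by a constant. The exponential in $D^2/\exp(64n\alpha^2)$ thus arises from inverting the packing number, not from any property of the divergence bound. Your proof instead uses the two extreme point masses $\delta_{\pm D}$ in a single Le Cam two-point comparison, and the exponential arises because you pass from the KL contraction bound (Corollary~3 of \cite{duchi2018minimax}) to a TV bound via the Bretagnolle--Huber inequality, which is exactly what salvages the argument when $n\alpha^2$ is not small; with Pinsker in its place the bound would indeed be vacuous, as you note. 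Both proofs invoke the same LDP contraction inequality and yield constants of the same order (yours slightly sharper, $D^2\exp(-16n\alpha^2)/4$ versus $D^2\exp(-64n\alpha^2)/32$). The Fano route is more naturally extensible to settings where one needs a genuine multi-point packing, for example high-dimensional analogues of this problem; the two-point route you take is shorter and conceptually more transparent here, isolating Bretagnolle--Huber as the precise ingredient that keeps the bound nontrivial at all sample sizes.
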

\begin{proof}
 Let $M(\delta)$ denote the $\delta$-packing number of the interval $[-D,D]$, i.e.\ the maximal cardinality of a set $\{\mu_1,\dotsc,\mu_M\} \subseteq [-D,D]$ such that $|\mu_i-\mu_j| > \delta$ for all distinct $i,j \in \{1,\dotsc,M\}$. Let $J$ be uniformly distributed on $\{1,\dotsc,M(\delta)\}$ and $P_i \in \mathcal{P}_k$ be distribution with mean $\mu_i$, for $i=1,\dotsc,M(\delta)$. We use $QP^n_i$ to denote the marginal distribution of the private data $Z_1,\dotsc,Z_n$, and define the mixture distribution $\overline{Q} = \{M(\delta)\}^{-1}\sum_{i=1}^{M(\delta)}QP^n_i$. Then using (15) in \cite{duchi2018minimax}, we can upper bound the mutual information $I(Z_1^n; J)$ as
\begin{equation}\label{fano_upper}
    I(Z_1^n; J) = \frac{1}{M(\delta)}  \sum_{i=1}^{M(\delta)} \mathrm{KL}(QP^n_i, \overline{Q}) \leq 4n(e^\alpha-1)^2 \frac{1}{\{M(\delta)\}^2} \sum_{i,i'=1}^{M(\delta)} \mathrm{TV}(P_i,P_{i'}) \leq 4n(e^\alpha-1)^2,
\end{equation}
where the last inequality holds from upper bounding the total variation distance by $1$. Then, applying Fano's inequality \citep[e.g.\ Proposition 15.12 in][]{wainwright2019high} together with \eqref{fano_upper}, we have that
\[
\mathcal{R}_{n,\alpha}(0) \geq \frac{\delta^2}{4} \inf_{Q \in \mathcal{Q}_\alpha} \Big\{1-\frac{I(Z_{1}^n;J)+\log(2)}{\log(M(\delta))}\Big\} \geq \frac{\delta^2}{4} \Big\{1-\frac{4n(e^\alpha-1)^2+\log(2)}{\log(M(\delta))}\Big\}.
\]
Writing $M^{-1}(x)=\inf\{\delta \in (0,\delta_0): M(\delta)<x\}$, for some $\delta_0>0$ and $x>1$, and note that when $D<\infty$, $M^{-1}(x) = 2D/(\lceil x \rceil -1) \geq 2D/x$. Therefore, it holds that when $\alpha \in (0,1)$
\[
\mathcal{R}_{n,\alpha}(0) \geq \frac{1}{8} \left(M^{-1}\left(4e^{8n(e^\alpha-1)^2}\right)\right)^2 \geq \frac{D^2}{32\exp{(16n(e^\alpha-1)^2)}} \geq \frac{D^2}{32\exp{(64n\alpha^2)}},
\]
where the last inequality is due to $e^\alpha-1<2\alpha$ when $\alpha \in (0,1)$.
\end{proof}

\begin{proof}[Proof of \Cref{mean_optimalthm}]

Fix the data-generating distribution $P_{k,\varepsilon}$, write $P_0$ for the uncontaminated version, and let
\[
    \mathcal{J}_0 = \{j \in \mathcal{J} : P_{k,\varepsilon}(A_j) > \varepsilon + (1-\varepsilon)(6/M)^k \}.
\]
Since, by Chebyshev's inequality,
\[
    \mathbb{P}(|X-\mu| \geq M/6) \leq \varepsilon + (1-\varepsilon)(6/M)^k,
\]
we must always have $|\mathcal{J}_0| \leq 2$. Again by Chebyshev's inequality there will always exist $j \in \mathcal{J}$ with $P_{k,\varepsilon}(A_j) \geq (1/2)\{1-\varepsilon-(1-\varepsilon)(6/M)^k\}$ so, since we have the condition $\varepsilon+(1-\varepsilon)(6/M)^k\leq 1/12 < 1/3$, we must also have $|\mathcal{J}_0| \geq 1$, so that $|\mathcal{J}_0| \in \{1,2\}$.  Now, for $j \not\in \mathcal{J}_0$ we can see that
\begin{align*}
    \mathbb{P}(j \in \hat{\mathcal{J}}) &= \mathbb{P} \biggl( \frac{1}{n} \sum_{i=1}^n \Bigl( \mathbbm{1}_{\{X_i \in A_j\}} + \frac{2}{\alpha} W_{ij} \Bigr) \geq \tau \biggr) \\
    & \leq \mathbb{P} \biggl( \frac{1}{n} \sum_{i=1}^n \Bigl( \mathbbm{1}_{\{X_i \in A_j\}} - P_{k,\varepsilon}(A_j) + \frac{2}{\alpha} W_{ij} \Bigr) \geq 4\sqrt{2 \log(12T/(M\delta))/(n\alpha^2)} \biggr) \\
    & \leq \exp \biggl( - \frac{4 \log(12T/(M\delta))}{\alpha^2} \biggr) + \exp \biggl( - \frac{1}{8} \min \biggl\{ 8 \log(12T/(M\delta)), 4 \sqrt{2n \log(12T/(M\delta))} \biggr\} \biggr) \\
    & \leq 2 \exp \bigl( - \log(12T/(M\delta)) \bigr) = M \delta /(6T),
\end{align*}
where the second inequality follow from Hoeffding's inequality \citep[e.g.\ Theorem 2.26 in][]{vershynin2018high} and Bernstein's inequality \citep[e.g.\ Theorem 2.8.1 in][]{vershynin2018high}, and for the final inequality we used the condition $n^{-1} \log(12T^3 n \alpha^2/M) \leq 1/2$. Since $|\mathcal{J} \setminus \mathcal{J}_0| \leq 6T/M$ we therefore have that
\begin{equation}
\label{Eq:JHatInclusion}
    \mathbb{P}(\hat{\mathcal{J}} \not \subseteq \mathcal{J}_0) \leq \delta.
\end{equation}
We now show that $\hat{\mathcal{J}}$ is non-empty with high probability. Indeed, let $j\in\mathcal{J}$ be such that $P_{k,\varepsilon}(A_j) \geq (1/2)\{1-\varepsilon-(1-\varepsilon)(6/M)^k\}$. Then, using the same concentration inequality as above, we have
\begin{align}
\label{Eq:JHatEmpty}
    \mathbb{P}( \hat{\mathcal{J}} = \emptyset ) &\leq \mathbb{P}(j \not\in \hat{\mathcal{J}}) = \mathbb{P} \biggl( \frac{1}{n} \sum_{i=1}^n\bigl\{ \mathbbm{1}_{\{X_i \in A_j\}} + (2/\alpha)W_{ij} \bigr\} < \tau \biggr) \nonumber \\ 
    & \leq \mathbb{P} \biggl( \frac{1}{n} \sum_{i=1}^n\bigl\{ \mathbbm{1}_{\{X_i \in A_j\}} - P_{k,\varepsilon}(A_j) + (2/\alpha)W_{ij} \bigr\} < \tau - (1/2)\{ 1 - \varepsilon - (1-\varepsilon)(6/M)^k\} \biggr) \nonumber \\
    & \leq \mathbb{P} \biggl( \frac{1}{n} \sum_{i=1}^n\bigl\{ \mathbbm{1}_{\{X_i \in A_j\}} - P_{k,\varepsilon}(A_j) + (2/\alpha)W_{ij} \bigr\} < -1/4 \biggr) \nonumber \\
    & \leq \exp\Bigl( - \frac{n}{128} \Bigr) + \exp \Bigl( - \frac{1}{8} \min(n\alpha^2/64, n \alpha/4) \Bigr) \leq 2 \exp \Bigl( - \frac{n \alpha^2}{512} \Bigr),
\end{align}
by using the conditions $\alpha \leq 1$, $\varepsilon+(1-\varepsilon)(6/M)^k \leq 1/12$ and $(n\alpha^2)^{-1} \log(12T^3 n \alpha^2/M) \leq 
1/512$. 

With these bounds in place we turn to the mean squared error of our estimator, which we decompose as
\begin{equation}
\label{Eq:MSEDecomposition}
    \mathbb{E}\{ (\hat{\mu} - \mu)^2 \} = (\mathbb{E} \hat{\mu} - \mu)^2 + \mathrm{Var}\bigl( \mathbb{E} \bigl\{ \hat{\mu} | J \bigr\} \bigr) + \mathbb{E} \bigl\{ \mathrm{Var}( \hat{\mu} |J ) \bigr\}.
\end{equation}
The final term of~\eqref{Eq:MSEDecomposition} can be simply bounded by saying
\begin{equation}
\label{Eq:ConditionalVariance}
    \mathbb{E} \bigl\{ \mathrm{Var}( \hat{\mu} |J ) \bigr\} =  \frac{1}{n} \mathbb{E} \bigl\{ \mathbbm{1}_{\{ \hat{\mathcal{J}} \neq \emptyset\}} \mathrm{Var}(Z_{(L+2)n}^{(L)} | L) \bigr\} \leq \frac{1}{n} \biggl( \frac{M^2}{4} + \frac{2M^2}{\alpha^2} \biggr) = \frac{9M^2}{4n \alpha^2}.
\end{equation}
Bounds on the first two terms of~\eqref{Eq:MSEDecomposition} will follow from~\eqref{Eq:JHatInclusion},~\eqref{Eq:JHatEmpty} and control of $\mathbb{E}(\hat{\mu} | J=j_0-1)$ for $j_0 \in \mathcal{J}_0$. Given a realisation $j_0 \in \mathcal{J}_0$ of $J+1$, write $\ell_0$ for the associated realisation of $L$, so that $\ell_0 \equiv (j_0-1) (\text{mod} 3)$. Then we have
\begin{align}
\label{Eq:ConditionalMeanDecomposition}
    \bigl| \mathbb{E}(\hat{\mu} | J=j_0-1) - \mu \bigr| &= \bigl| \mathbb{E}( [ R_{(\ell_0+2)n}^{(\ell_0)} ]_0^M ) + (j_0-2)M/3 - \mu \bigr| \nonumber \\
    & \leq \varepsilon \max\bigl( | (j_0+1)M/3 - \mu|, |(j_0-2)M/3 - \mu| \bigr) \nonumber \\
    & \hspace{75pt} + (1-\varepsilon) \bigl| \mathbb{E}_{P_0} \bigl\{ [ R_{(\ell_0+2)n}^{(\ell_0)} ]_0^M + (j_0-2)M/3 - X_{(\ell_0+2)n} \bigr\} \bigr|.
\end{align}
Since $j_0 \in \mathcal{J}_0$, if it were the case that $\mu \not\in [(j_0-3/2)M/3, (j_0+1/2)M/3)$ we would have the chain of inequalities
\begin{align*}
    \varepsilon + (1-\varepsilon)(6/M)^k < P_{k,\varepsilon}(A_{j_0}) \leq \varepsilon + (1-\varepsilon) \mathbb{P}_{P_0}(|X-\mu| \geq M/6) \leq \varepsilon + (1-\varepsilon)(6/M)^k,
\end{align*}
which is a contradiction. We therefore have $\mu \in [(j_0-3/2)M/3, (j_0+1/2)M/3)$ and so
\begin{equation}
\label{Eq:ContaminationBias}
    \varepsilon \max\bigl( | (j_0+1)M/3 - \mu|, |(j_0-2)M/3 - \mu| \bigr) \leq 5 \varepsilon M/2.
\end{equation}
Now, again using the fact that $\mu \in [(j_0-3/2)M/3, (j_0+1/2)M/3)$, we see that
\begin{align}
\label{Eq:TruncationBias}
    \bigl| \mathbb{E}_{P_0} & \bigl\{ [ R_{(\ell_0+2)n}^{(\ell_0)} ]_0^M + (j_0-2)M/3 - X_{(\ell_0+2)n} \bigr\} \bigr| \nonumber \\
    & = \Bigl| \mathbb{E}_{P_0} \Bigl\{ \mathbbm{1}_{\{ X_{(\ell_0+2)n} \not\in [(j_0-2)M/3, (j_0+1)M/3)\} } \Bigl( [R_{(\ell_0+2)n}^{(\ell_0)} ]_0^M + (j_0-2)M/3 - X_{(\ell_0+2)n} \Bigr) \Bigr\} \Bigr| \nonumber \\
    & \leq \mathbb{E}_{P_0} \Bigl\{ \mathbbm{1}_{\{ X_{(\ell_0+2)n} < (j_0-2)M/3\} } \Bigl( [R_{(\ell_0+2)n}^{(\ell_0)} ]_0^M + (j_0-2)M/3 - X_{(\ell_0+2)n} \Bigr) \Bigr\} \nonumber \\
    & \hspace{50pt} + \mathbb{E}_{P_0} \Bigl\{ \mathbbm{1}_{\{ X_{(\ell_0+2)n} \geq  (j_0+1)M/3\} } \Bigl( X_{(\ell_0+2)n} - [R_{(\ell_0+2)n}^{(\ell_0)} ]_0^M - (j_0-2)M/3  \Bigr) \Bigr\} \nonumber \\
    & \leq \mathbb{E}_{P_0} \Bigl\{ \mathbbm{1}_{\{ X_1 - \mu < - M/6 \} } \Bigl( M - \frac{M}{6} + \mu - X_1 \Bigr) \Bigr\} \nonumber \\ 
    & \hspace{50pt} + \mathbb{E}_{P_0} \Bigl\{ \mathbbm{1}_{\{ X_1 - \mu \geq  M/6 \} } \Bigl( X_1 - \mu + (j_0+1/2)M/3 - (j_0-2)M/3 \Bigr) \Bigr\} \nonumber \\
    & = \frac{5M}{6} \mathbb{P}_{P_0}( |X_1 - \mu| \geq M/6) + \mathbb{E}_{P_0}( \mathbbm{1}_{\{|X_1 - \mu | \geq M/6\}} |X_1-\mu| ) \nonumber \\
    & \leq \frac{5M}{6} \Bigl( \frac{6}{M} \Bigr)^k + \Bigl(\frac{6}{M} \Bigr)^{k-1} = \frac{6^k}{M^{k-1}}.
\end{align}
Combining~\eqref{Eq:ConditionalMeanDecomposition},~\eqref{Eq:ContaminationBias} and~\eqref{Eq:TruncationBias} we see that, when $j_0 \in \mathcal{J}_0$, we have
\[
    \bigl| \mathbb{E}( \hat{\mu} | J=j_0-1) - \mu \bigr| \leq \frac{5}{2} \varepsilon M + (1-\varepsilon) \frac{6^k}{M^{k-1}}.
\]
First, we use this and the fact that $-T-M \leq \mathbb{E}(\hat{\mu} | J) \leq T+ M$ to see that
\begin{align}
\label{Eq:Bias}
    | \mathbb{E} \hat{\mu} - \mu| &\leq T \mathbb{P}(\hat{\mathcal{J}} = \emptyset) +  (2T+M) \mathbb{P}( \hat{\mathcal{J}} \not \subseteq \mathcal{J}_0) + \biggl| \sum_{j_0 \in \mathcal{J}_0 } \mathbb{P}(J=j_0-1) \bigl\{ \mathbb{E}( \hat{\mu} | J=j_0-1) - \mu \bigr\} \biggr| \nonumber \\
    & \leq 2T\exp(-n \alpha^2/512) +  \delta(2T+M) + \frac{5}{2} \varepsilon M + (1-\varepsilon) \frac{6^k}{M^{k-1}},
\end{align}
which controls the bias of our estimator. In dealing with the variance of the conditional mean, it will be helpful to split into the two cases $|\mathcal{J}_0|=1$ and $|\mathcal{J}_0|=2$. In the first of these cases, writing $\mathcal{J}_0=\{j_0\}$, we can say by~\eqref{Eq:JHatInclusion} and~\eqref{Eq:JHatEmpty} that
\begin{equation}
\label{Eq:ConditionalMean1}
    \mathrm{Var}\bigl( \mathbb{E} \bigl\{ \hat{\mu} | J \bigr\} \bigr) \leq \mathbb{E} \Bigl[ \Bigl\{ \mathbb{E}( \hat{\mu} | J ) - \mathbb{E}( \hat{\mu} | J = j_0-1) \Bigr\}^2 \Bigr] \leq (T+M)^2 \delta + 8(T+M)^2\exp(-n \alpha^2/512).
\end{equation}
In the second case, writing $\mathcal{J}_0=\{j_0,j_0+1\}$, we can use~\eqref{Eq:JHatInclusion} and~\eqref{Eq:JHatEmpty} to say that
\begin{align}
\label{Eq:ConditionalMean2}
     \mathrm{Var}\bigl(& \mathbb{E} \bigl( \hat{\mu} | J \bigr) \bigr) \leq \mathbb{E} \biggl[ \biggl\{ \mathbb{E}( \hat{\mu} | J ) - \sum_{j \in \mathcal{J}_0} \mathbb{P}(J=j-1)\mathbb{E}( \hat{\mu} | J = j-1) \biggr\}^2 \biggr] \nonumber \\
     & \leq (T+M)^2 \delta + 8(T+M)^2\exp(-n \alpha^2/512)  +  2\bigl\{ \mathbb{E}( \hat{\mu} | J=j_0-1) - \mathbb{E}( \hat{\mu} | J=j_0) \bigr\}^2 \nonumber \\
     & \leq (T+M)^2 \delta + 8(T+M)^2\exp(-n \alpha^2/512) + 2\{ 5 \varepsilon M + 2(1-\varepsilon) 6^k M^{-(k-1)} \}^2 \nonumber \\
     & \leq (T+M)^2 \delta + 8(T+M)^2\exp(-n \alpha^2/512) + 100 \varepsilon^2 M^2 + 16(1-\varepsilon)^2 36^k M^{-2(k-1)}.
\end{align}
Combining~\eqref{Eq:MSEDecomposition},~\eqref{Eq:ConditionalVariance},~\eqref{Eq:Bias},~\eqref{Eq:ConditionalMean1} and~\eqref{Eq:ConditionalMean2}, we see that we have
\[
    \mathbb{E}\{ (\hat{\mu}- \mu)^2 \}  \lesssim T^2\exp(-n \alpha^2/512) +  \frac{M^2}{n \alpha^2} + \varepsilon^2 M^2 + M^{-2(k-1)},
\]
as claimed.

\end{proof}

\subsection{Additional results}\label{sec-duchi-mean}
\begin{lemma}\label{lemma-mean-alpha-ldp}
For $i = 1,\dotsc, n$, write $Z_i = (Z_{i1},\dotsc,Z_{iJ})$, where $Z_{ij}$'s are defined in \eqref{eq-priv-fold1}. Then $Z_{i}$ satisfy $\alpha$-LDP. For $i = n+1,\dotsc,4n$, $Z_i^{(\ell)}$ defined in \eqref{eq-priv-foldl} also satisfy $\alpha$-LDP. 
\end{lemma}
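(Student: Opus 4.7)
The plan is to verify the LDP condition \eqref{alpha-ldp} directly for each of the two (non-interactive) mechanisms, since the randomness $W_{ij}, W_i^{(\ell)}$ is independent across individuals and each $Z_i$ (respectively $Z_i^{(\ell)}$) depends only on $X_i$. This reduces the problem to bounding the ratio of conditional densities of a single release given $X_i = x$ versus $X_i = x'$, and comes down to computing the $\ell_1$-sensitivity of the underlying function and invoking the standard Laplace mechanism argument.

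For the first mechanism, the raw statistic released (before noise) is the vector $f(x) = (\mathbbm{1}\{x \in A_j\})_{j \in \mathcal{J}}$. The key observation is that the bins $\{A_j\}_{j \in \mathcal{J}}$ are \emph{disjoint}, so for any $x, x' \in \mathbb{R}$ at most two coordinates of $f(x)$ and $f(x')$ differ (one flipping from $1$ to $0$ and another from $0$ to $1$), giving $\|f(x)-f(x')\|_1 \leq 2$. Writing out the joint density of $Z_i = (Z_{i1}, \ldots, Z_{iJ})$ as a product of Laplace densities with scale $2/\alpha$, one obtains
\[
    \frac{q(z_i\,|\,x)}{q(z_i\,|\,x')} = \exp\!\biggl(\frac{\alpha}{2}\sum_{j \in \mathcal{J}}\bigl(|z_{ij} - f_j(x')| - |z_{ij} - f_j(x)|\bigr)\biggr) \leq \exp\!\bigl(\tfrac{\alpha}{2}\|f(x)-f(x')\|_1\bigr) \leq e^\alpha,
\]
by the reverse triangle inequality. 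Hence \eqref{alpha-ldp} holds with the required constant.

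For the second mechanism, the released quantity before noise is $g(x) = [R^{(\ell)}(x)]_0^M$, which takes values in $[0,M]$ and therefore has sensitivity $|g(x) - g(x')| \leq M$. Applying the same one-dimensional Laplace mechanism calculation with noise scale $M/\alpha$ gives
\[
    \frac{q(z_i^{(\ell)}\,|\,x)}{q(z_i^{(\ell)}\,|\,x')} = \exp\!\Bigl(\tfrac{\alpha}{M}\bigl(|z_i^{(\ell)} - g(x')| - |z_i^{(\ell)} - g(x)|\bigr)\Bigr) \leq \exp\!\bigl(\tfrac{\alpha}{M}\cdot M\bigr) = e^\alpha.
\]

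Finally, since both mechanisms are non-interactive (no $Z$ is used as an input) and all $W_{ij}, W_i^{(\ell)}$ are mutually independent, the suprema over previous $z_{1:(i-1)}$ in \eqref{alpha-ldp} are trivial, so the per-individual bounds above are precisely the LDP guarantee. I do not expect any real obstacle: the only non-routine point is noticing the disjointness of the $A_j$'s that keeps the $\ell_1$-sensitivity at $2$ rather than $|\mathcal{J}|$ — without it, naive coordinate-wise composition would badly over-count the privacy budget.
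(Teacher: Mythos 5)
Your proof is correct and follows essentially the same route as the paper: both arguments reduce the first mechanism to the observation that, because the $A_j$ are disjoint, the indicator vector $(\mathbbm{1}\{x \in A_j\})_{j\in\mathcal{J}}$ can change in at most two coordinates, giving $\ell_1$-sensitivity $2$, and both treat the second mechanism as a one-dimensional Laplace mechanism with sensitivity $M$ via the truncation $[\cdot]_0^M$. The only cosmetic difference is that you make the reverse-triangle-inequality step and the $\ell_1$-sensitivity bookkeeping explicit, whereas the paper states the same bound directly inside the product.
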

\begin{proof}
For $i = n+1,\dotsc,4n$, $Z_i^{(\ell)}$ are obtained directly using the Laplace mechanism \citep{dwork2006calibrating} and  satisfy $\alpha$-LDP since $[R_i^{(\ell)}]_0^M \in [0,M]$ for any value $x_i$. For $i = 1,\dotsc,n$, we have 
\begin{align*}
    \frac{\mathbb{P}(Z_i = z_i|x_i)}{\mathbb{P}(Z_i = z_i|x_i')} &= \prod_{j \in \mathcal{J}} \exp\left(\frac{|z_{ij} - \mathbbm{1}_{x_i \in A_j}|-|z_{ij}-\mathbbm{1}_{x_j'\in A_j}|}{2/\alpha}\right)\\& \leq \prod_{j \in \mathcal{J}} \exp\left(\frac{|\mathbbm{1}_{\{x_i \in A_j\}}-\mathbbm{1}_{\{x_j'\in A_j\}}|}{2/\alpha}\right) \leq \exp(\alpha),
\end{align*}
where the last inequality holds since there are two most two $j$'s in $\mathcal{J}$ such that $\mathbbm{1}_{\{x_i \in A_j\}} \neq \mathbbm{1}_{\{x_j'\in A_j\}}$. 
\end{proof}

We conclude this section by an analysis of the robustness property of the following mean estimation procedure proposed by \cite{duchi2018minimax}. For a truncation level $M > 0$ to be chosen, the privatised data $Z_i$'s are obtained by 
\[
Z_i = [X_i]_M + \frac{2M}{\alpha}W_i,
\]
where $W_i$, $i = 1\dotsc,n$ are i.i.d.~standard Laplace random variables, and the mean is estimated by $$\hat{\mu} = n^{-1}\sum_{i=1}^nZ_i.$$

\begin{prop}\label{meanupper1}
Given i.i.d random variables $X_1,\dotsc,X_n \sim P_{k, \varepsilon} = (1-\varepsilon)P_k+\varepsilon G$, where $P_k \in \mathcal{P}_k$. Suppose that $M>2D$, then for $\alpha \in (0,1]$, it holds that
\begin{equation}\label{mean_upper_1}
    \mathbb{E}[|\hat{\mu} - \mu|^2] \lesssim \frac{M^2}{n\alpha^2}+\varepsilon^2M^2 + \frac{D^2  }{M^{2k-2}}.
\end{equation}
With the choice of truncation parameter $M = D^{1/k}\left(\varepsilon^{-1/k} \wedge (n\alpha^2)^{1/(2k)}\right)$, we have
\[
\mathbb{E}[|\hat{\mu} - \mu|^2] \lesssim D^{2/k}\left((n\alpha^2)^{1/k-1} \vee \varepsilon^{2-2/k}\right).
\]

\end{prop}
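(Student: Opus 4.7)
Since $\hat{\mu} = n^{-1} \sum_{i=1}^n Z_i$ is an average of $n$ i.i.d.\ random variables, the plan is a standard bias--variance decomposition,
\[
    \mathbb{E}[|\hat{\mu} - \mu|^2] = \bigl(\mathbb{E}[\hat{\mu}] - \mu\bigr)^2 + \frac{1}{n}\mathrm{Var}(Z_1),
\]
and to bound each term separately. The variance is immediate: since $[X_1]_M$ and the standard Laplace $W_1$ are independent, $\mathrm{Var}(Z_1) = \mathrm{Var}([X_1]_M) + (2M/\alpha)^2 \cdot 2 \leq M^2 + 8M^2/\alpha^2 \lesssim M^2/\alpha^2$, using $\alpha \leq 1$. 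This yields the first term $M^2/(n\alpha^2)$ in \eqref{mean_upper_1}.

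The main work is controlling the bias, which splits naturally into a contamination piece and a truncation piece. Write $P_{k,\varepsilon} = (1-\varepsilon)P_k + \varepsilon G$ and use $\mathbb{E}_{P_k}[X] = \mu$ to decompose
\[
    \mathbb{E}[\hat{\mu}] - \mu = (1-\varepsilon)\, \mathbb{E}_{P_k}\bigl([X]_M - X\bigr) \;-\; \varepsilon\, \mu \;+\; \varepsilon\, \mathbb{E}_G\bigl[[X]_M\bigr].
\]
The contamination contributions are controlled by the crude bounds $|\mu| \leq D \leq M/2$ and $\bigl|\mathbb{E}_G[[X]_M]\bigr| \leq M$, giving a joint bound $\lesssim \varepsilon M$. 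For the truncation piece I will use the identity $[X]_M - X = -\operatorname{sign}(X)(|X|-M)_+$, then the bound $|X|\mathbbm{1}\{|X|>M\} \leq (|X-\mu| + D)\mathbbm{1}\{|X-\mu|>M/2\}$ which follows from $M>2D$. Splitting into the two terms and applying Markov's inequality to the $k$th central moment (bounded by $1$ under $\mathcal{P}_k$) yields
\[
    \bigl|\mathbb{E}_{P_k}\bigl([X]_M - X\bigr)\bigr| \leq \mathbb{E}_{P_k}\bigl[|X-\mu|\mathbbm{1}\{|X-\mu|>M/2\}\bigr] + D\,\mathbb{P}_{P_k}\bigl(|X-\mu|>M/2\bigr) \lesssim \frac{1}{M^{k-1}},
\]
since $2D/M < 1$ makes the second piece dominated by the first. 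Putting the pieces together and using $D \geq 1$ yields $|\mathbb{E}[\hat{\mu}]-\mu|^2 \lesssim \varepsilon^2 M^2 + D^2/M^{2k-2}$, which together with the variance bound gives \eqref{mean_upper_1}.

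Finally, plugging in $M = D^{1/k}\bigl(\varepsilon^{-1/k} \wedge (n\alpha^2)^{1/(2k)}\bigr)$ is a routine arithmetic check: the variance and bias-from-truncation terms balance, each becoming $D^{2/k}(n\alpha^2)^{1/k-1}$, while the contamination bias term becomes $D^{2/k}\varepsilon^{2-2/k}$, yielding the stated rate. The only conceptual subtlety is the truncation bias step, where one must exploit $M > 2D$ together with the finite $k$th moment to gain the decay $M^{-(k-1)}$; this is the place where the heavy-tailed assumption enters and where care is needed to separate the $D$ and $M$ dependence cleanly.
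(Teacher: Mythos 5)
Your argument is correct and follows the paper's structure exactly: the same bias--variance split, the same decomposition of the bias into a contamination piece (bounded by $\lesssim \varepsilon M$ using $|\mu| \leq D \leq M/2$) and a truncation piece, and the same final arithmetic with the stated choice of $M$. The one point of departure is in bounding $\bigl|\mathbb{E}_{P_k}([X]_M - X)\bigr|$: the paper applies H\"older's inequality to $\mathbb{E}_{P_k}\bigl[|X|\mathbbm{1}\{|X|>M\}\bigr]$, picking up a $D$-dependent factor through $\mathbb{E}_{P_k}[|X|^k]^{1/k} \lesssim D$ and arriving at $\lesssim D\,M^{-(k-1)}$, whereas you pass to the centred variable via the pointwise bound $|X|\mathbbm{1}\{|X|>M\} \le (|X-\mu|+D)\mathbbm{1}\{|X-\mu|>M/2\}$ (valid since $M>2D$) and apply Markov directly to the bounded $k$th central moment, obtaining the slightly sharper $\lesssim M^{-(k-1)}$ and then relaxing with $D \geq 1$. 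Both routes are valid; yours is marginally more elementary and keeps the $D$-dependence cleanly confined to the contamination term before the final relaxation.
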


\begin{remark}
From Proposition \ref{meanupper1}, we have when $D$ is of constant order, $\hat{\mu}$ matches the minimax lower bound, but when $D$ increases with $n$, this bound is sub-optimal. 
\end{remark}
\begin{proof} 
We use the bias and variance decomposition $\mathbb{E}[|\hat{\mu}-\mu|^2] = (\mathbb{E}[\hat{\mu}-\mu])^2+\mathrm{Var}(\hat{\mu})$ to bound the mean squared error. 
Due to the truncation, the variance of $\hat{\mu}$ can be bounded by 
\[
\mathrm{Var}(\hat{\mu})\leq \frac{1}{n}\mathrm{Var}([X_i]_M]) + \frac{8M^2}{n\alpha^2} \leq \frac{9M^2}{n\alpha^2}.
\]
As for the bias term, since $W_i$'s have mean zero, it holds that  
\[  
\left|\mathbb{E}_{}[\hat{\mu} - \mu]\right| = \left|\mathbb{E}_{}[[X_1]_M - \mu]\right| \leq \varepsilon (M+|\mu|) + (1-\varepsilon)|\mathbb{E}_{P_k}[[X]_M]-\mu| \leq \varepsilon|M+D|+\left|\mathbb{E}_{P_k}[[X]_M]-\mu\right|.
\]
The final term is controlled in a way that is similar to equation (45) in \cite{duchi2018minimax}. Provided that $M>2D$, we have 
\begin{align*}\label{truncation}
     & \left|\mathbb{E}_{P_k}[[X]_M]-\mu\right| \leq \mathbb{E}_{P_k}\big|[[X]_M-X]\big| \leq 2^{1/k} \mathbb{E}_{P_k}[|X|^k]^{1/k}  \mathbb{P}(|X|>M)^{1/k'} \\
     \leq & 2^{1+1/k}(1+|\mu|)(M-\mu)^{1-k} \leq \frac{8D}{(M-D)^{k-1}} \lesssim \frac{D}{M^{k-1}}.
\end{align*}
where $k' = k/(k-1)$, and in the third inequality we used 
\[
\mathbb{E}_{P_k}[|X|^k]^{1/k} = \mathbb{E}_{P_k}[|X-\mu+\mu|^k]^{1/k} \leq 2(\mathbb{E}_{P_k}|X-\mu|^k+|\mu|^k)^{1/k} \leq 2(1+D)
\]
and 
\[
\mathbb{P}(|X|>M)^{1/k'} = \mathbb{P}(|X -\mu|>M-\mu)^{1/k'} \leq  (M-\mu)^{1-k} 
\]
by Markov's inequality. 
 Therefore, we have finally 
 \[
 \mathbb{E}_{}[|\hat{\mu} - \mu|^2] \lesssim \frac{M^2}{n\alpha^2}+\varepsilon^2M^2 + \frac{D^2  }{M^{2k-2}}.
 \]
\end{proof}

\section[]{Technical details of \Cref{sec3}}\label{app-d}

\subsection[]{Proofs of results in \Cref{sec3}}

\begin{proof}[Proof of Proposition \ref{npd_lowerbound_prop}]
In light of \Cref{general_lower}, it suffices to consider the minimax risks without contamination, denoted as $\mathcal{R}_{n, \alpha}(\mathcal{F}_{\beta}, \|\cdot\|^2_2, 0)$ and $\mathcal{R}_{n, \alpha}(\mathcal{F}_{\beta}, \|\cdot\|_{\infty}, 0)$; and $\omega^2(\varepsilon)$, defined in~\eqref{eq-TV-modulus-conti}.

\medskip
\noindent \textbf{Step 1.}  When $\varepsilon = 0$, claims (\ref{npd_lowerbound_2}) and (\ref{npd_lowerbound_inf}) follow from Corollary 2.1 (or equation (1.6)) in \cite{butucea2020local} since Sobolev space is a special case of Besov space studied therein. Specifically, with $s$ replaced by $\beta$ and $p = 2$ in their result, we have 
\begin{equation}\label{eq-density-nocta}
    \mathcal{R}_{n, \alpha}(\mathcal{F}_{\beta}, \|\cdot\|^2_2, 0) \gtrsim (n\alpha^2)^{-\frac{2\beta}{2\beta+2}} \qquad \text{and} \qquad \mathcal{R}_{n, \alpha}(\mathcal{F}_{\beta}, \|\cdot\|_{\infty}, 0) \gtrsim \left\{\frac{\log(n\alpha^2)}{n\alpha^2}\right\}^{\frac{2\beta-1}{4\beta+2}}
\end{equation}

Note that although the statement of Corollary 2.1 in \cite{butucea2020local} does not include the case $r = \infty$, an inspection of the proof suggests that the $r = \infty$ case follows exactly the same arguments and corresponding result holds by first taking $r$-th root and then choose $r = \infty$.  

\medskip 
\noindent \textbf{Step 2.}  To obtain a lower bound on $\omega(\varepsilon)$, due to its definition, it suffices to have a lower bound on different losses of a particular pair of densities, satisfying the total variation distance upper bound.  To be specific, we consider the densities $f_0 \in \mathcal{F}_\beta$ and $f_1 = f_0+\gamma \psi_{jk}$, where 
the $f_0 \in \mathcal{F}_\beta$ satisfies $f_0 = c_0 > 0$ on some interval $[a,b] \subseteq [0,1]$ and the function $\psi_{jk}(x) = 2^{j/2}\psi(2^jx-k)$ is a wavelet basis function that is support on $[a,b]$. Note that $f_1$ is a indeed a density function: (1) $\int_{0}^1 \psi_{jk}(x)\, \mathrm{d}x = 0$, since \eqref{regular}; and (2) $f_1 \geq 0$, since with $\beta>1/2$ and $c>0$ sufficiently small, it holds that 
\[
f_1 \geq c_0-\gamma\|\psi_{jk}\|_\infty = c_0 - c2^{-j(\beta-1/2)}\|\psi\|_{\infty} \geq c_0 - c\|\psi\|_{\infty} \geq 0.
\]
In addition, due to the characterisation \eqref{besov_sobolev} of Sobolev space and the fact that $2^{j\beta} \gamma = c < \infty$, we have that $f_1 \in \mathcal{F}_\beta$.

Given that $f_0, f_1 \in \mathcal{F}_\beta$, we also have that 
\begin{equation}\label{eq-pf-prop6-1}
\mathrm{TV}(P_{f_0},P_{f_1}) = \frac{1}{2}\int_{0}^1 |f_0(x) - f_1(x)|\,\mathrm{d}x = \frac{\gamma}{2}\int_{0}^1 |\psi_{jk}(x)|\,\mathrm{d}x = \frac{c}{2}\|\psi\|_12^{-j(\beta+1/2)}. 
\end{equation}
Note that $\|\psi\|_t = \left(\int_{-A}^{A}|\psi(x)|^t\,\mathrm{d}x\right)^{1/t}< \infty$ for any integer $t \geq 1$ due to our assumption $\|\psi\|_\infty < \infty$ in \eqref{regular} and $\psi$ is compactly supported on $[-A,A]$. With the choice $2^j = c_1\varepsilon^{-1/(\beta+1/2)}$, for some absolute constant $c_1$ large enough, \eqref{eq-pf-prop6-1} leads to that $\mathrm{TV}(P_{f_0},P_{f_1}) \leq \varepsilon/(1-\varepsilon)$.  Then for the $t$-th norm, with $t$ to be specified, we have that
\begin{equation}\label{eq-pf-prop6-2}
\|f_1-f_0\|_t = \gamma2^{j(1/2-1/t)}\|\psi\|_t \asymp \varepsilon^{\beta/(\beta+1/2)} \varepsilon^{(1/t-1/2)/(\beta+1/2)} = \varepsilon^{(\beta+1/t-1/2)/(\beta+1/2)},
\end{equation}
which serves as a lower bound on $\omega(\varepsilon)$ with $\rho = \|\cdot\|_t$.  

Setting $t = 2$, \eqref{eq-pf-prop6-2} leads to
\begin{equation}\label{eq-pf-prop6-3}
    \omega(\varepsilon/2) \gtrsim \begin{cases}
        \varepsilon^{\frac{4\beta}{2\beta+1}}, & \mbox{for the squared-$L_2$-loss},\\
        \varepsilon^{\frac{2\beta - 1}{2\beta+1}}, & \mbox{for the $L_{\infty}$-loss}.
    \end{cases}
\end{equation}

\medskip 
\noindent \textbf{Step 3.}  Combining \eqref{eq-density-nocta} and \eqref{eq-pf-prop6-3}, due to \Cref{general_lower}, we conclude the proof.
\end{proof}

\begin{proof}[Proof of Theorem \ref{L2_risk}] 

To simplify notation, let $\mathbb{E}$ denote the unconditional expectation $\mathbb{E}_{P_{f_\varepsilon}, Q}$.  Using the Sobolev ellipsoid characterisation \eqref{ellipsoid} with trigonometric functions \eqref{trigonometricbasis}, we write 
\[
    f = \sum_{j=1}^\infty \theta_j\varphi_j = \sum_{j=1}^\infty \left\{\int_{[0, 1]} f(x) \varphi_j(x)\, \mathrm{d}x \right\} \varphi_j.
\] 
Due to the orthonormality of the basis functions, we have that
\begin{equation}\label{bvl2}
    \|\hat{f}-f\|_2^2 = \sum_{j=1}^k(\theta_j-\overline{Z}_j)^2 + \sum_{j>k}\theta_j^2  = (I) + (II).
\end{equation}

For term $(II)$, using \eqref{ellipsoid}, we have that 
\begin{equation}\label{tribias}
    \sum_{j>k}\theta_j^2 = \sum_{j>k}j^{2\beta} \frac{\theta_j^2}{j^{2\beta}} \leq \frac{1}{k^{2\beta}}\sum_{j>k}j^{2\beta}\theta_j^2 \leq r^2 k^{-2\beta}.
\end{equation}
For term $(I)$,  note that 
\begin{equation*}
    \mathbb{E}(Z_{i,j}) = \mathbb{E}\Big\{\mathbb{E}_Q(Z_{i,j}|X_i)\Big\} = \mathbb{E}_{P_{f_\varepsilon}}\{\varphi_j(X_i)\} = (1-\varepsilon)\theta_j + \varepsilon \mathbb{E}_G\{\varphi_j(X_i)\}. 
\end{equation*}
Since $\sup_{1 \leq j \leq k}\|\varphi_j(\cdot)\|_{\infty}\leq B_0 = \sqrt{2}$, we have that
\[
    \big|\mathbb{E}\left(\overline{Z}_{j} - \theta_j\right)\big| = \Big|\frac{1}{n}\sum_{i=1}^n \varepsilon(\mathbb{E}_g[\varphi_j(X_i)] - \theta_j) \Big| \leq \varepsilon (B_0+ |\theta_j|).
\]
Since $|Z_{i,j}| = B$, by the construction of privacy mechanism detailed in Appendix \ref{addition4}, for any $i = 1,\dotsc,n $ and $j =1,\dotsc,k$, we have that $\mathrm{Var}[Z_{i,j}]\leq B^2$. The independence of $Z_{1,j},\dotsc,Z_{n,j}$ leads to that
\begin{equation}\label{meanl2}
    \mathbb{E}[(\theta_j-\overline{Z}_j)^2] = \left(\mathbb{E}[\overline{Z}_j] - \theta_j\right)^2 + \mathrm{Var}(Z_{i,j})/n \leq 2\varepsilon^2(B_0^2+\theta_j^2)+B^2/n.
\end{equation}

Combining \eqref{bvl2}, \eqref{tribias} and \eqref{meanl2}, we have that
\begin{align}
    \mathbb{E}(\|\hat{f}-f\|_2^2)  &\leq \sum_{j=1}^k\left\{2\varepsilon^2(B_0^2+\theta_j^2)+\frac{B^2}{n}\right\} + \frac{r^2}{k^{2\beta}} \nonumber \\
    & \lesssim k\varepsilon^2B_0^2 +\varepsilon^2r^2 +\frac{B_0^2k^2}{n\alpha^2}+ \frac{r^2}{k^{2\beta}} \asymp k\varepsilon^2+\frac{k^2}{n\alpha^2} + \frac{1}{k^{2\beta}}, \label{eq-proof-thm7-1}
\end{align}
where the second inequality is due to \eqref{ellipsoid} and the choice of $B$ in the privacy mechanism, and the final derivation is due to $B_0 = \sqrt{2} \asymp r$.

With the choice $k \asymp \varepsilon^{-\frac{2}{2\beta+1}} \wedge (n\alpha^2)^{\frac{1}{2\beta+2}}$, the upper bound in \eqref{eq-proof-thm7-1} reads as
\[
\mathbb{E}[\|\hat{f}-f\|_2^2] \lesssim \varepsilon^{\frac{4\beta}{2\beta+1}} \vee (n\alpha^2)^{-\frac{2\beta}{2\beta+2}},
\]
which concludes the proof.
\end{proof}

\begin{proof}[Proof of Theorem \ref{infinity_risk}]

In this proof, to simplify notation, let $\hat{f}$ denote the estimator $\hat{f}_{\mathrm{Lap}}$ and let $\mathbb{E}_W$ be the expectation with respect to the Laplace random variables and $\mathbb{E}$ for the unconditional expectation $\mathbb{E}_{P_{f_\varepsilon}, Q}$.  

In order to upper bound the $L_{\infty}$-loss assuming the true density function $f \in \mathcal{F}_{\beta}$, in this proof, we upper bound the loss for a larger space of $f$.  To be specific, by the Sobolev embedding theorem \citep[Corollary 9.2 in][]{hardle2012wavelets}, it holds that $\mathcal{F}_{\beta} \subseteq D^{\beta-1/2,\infty}_\infty$, where
\[
D^{\beta-1/2,\infty}_\infty = \left\{f \in B^{\beta-1/2,\infty}_\infty, \, f\geq 0, \, \mathrm{supp}(f) \subseteq [0,1], \, \int_{[0, 1]} f(x)\, \mathrm{d}x = 1 \right\},
\]
with $B^{\beta-1/2,\infty}_\infty$ being a Besov space
\begin{equation}\label{sobem}
    B^{\beta-1/2,\infty}_\infty = \left\{f = \sum_{j\geq -1}\sum_{k \in \mathcal{N}_j}\beta_{jk}\psi_{jk}, \, \sup_{j,k} 2^{j\beta }|\beta_{jk}| < \infty\right\}.
\end{equation}
It then suffices to upper bound $\sup_{f \in D^{\beta-1/2,\infty}_\infty} \mathbb{E}(\|\hat{f} - f\|_{\infty})$.

Since $f \in D^{\beta-1/2,\infty}_\infty$ is a density, using the representation in \eqref{sobem}, the coefficients $\beta_{jk}$'s can only be non-zero if $-A \leq -k \leq 2^j -k \leq A$, and so $|\mathcal{N}_j| \leq 2^j + 2A+1$. For each fixed value of $x$ there are at most $2A+1$ values of $k$ for which $\psi_{jk}(x)$ is non-zero. 

It follows from Lemma \ref{donoho} with $p = \infty$ that
\begin{align*}
    \|\hat{f} - f\|_\infty &\leq \sum_{j = -1}^{J}\sup_{x \in [0,1]} \Bigg|\sum_{k \in \mathcal{N}_j} (\hat{\beta}_{jk}-\beta_{jk})\psi_{jk}(x)\Bigg| + \sum_{j > J}\sup_{x \in [0,1]} \Bigg|\sum_{k \in \mathcal{N}_j} \beta_{jk} \psi_{jk}(x)\Bigg| \\ &\lesssim \sum_{j= -1}^J 2^{j/2} \max_{k \in \mathcal{N}_j}|\hat{\beta}_{jk}-\beta_{jk}|  + \sum_{j = -1}^J 2^{j/2} \max_{k\in \mathcal{N}_j}|\beta_{jk}|. 
\end{align*}
In Lemma \ref{L4}, we show that 
\[
\mathbb{E}\left[\max_{-1\leq j\leq J,k \in \mathcal{N}_j}\Big|\hat{\beta}_{jk} - {\beta}_{jk}\Big|\right] \lesssim \sigma_J\sqrt{\frac{J}{n}} + 2^{J/2}\sqrt{\frac{J}{n}} + \varepsilon 2^{J/2},
\]
which then leads to that
\begin{align*}
    \mathbb{E}[\|\hat{f} - f\|_{\infty}] &\lesssim \sum_{j = -1}^J 2^{j/2} \mathbb{E}\left[\max_{k \in \mathcal{N}_j}|\hat{\beta}_{jk} - \beta_{jk}|\right] + \sum_{j>J}2^{j/2}\max_{k \in \mathcal{N}_j}|\beta_{jk}|\\
    & \lesssim 2^{J/2}\left( \sigma_J\sqrt{\frac{J}{n}} + 2^{J/2}\sqrt{\frac{J}{n}} + \varepsilon 2^{J/2} \right) + \sum_{j>J} 2^{j(1/2-\beta)} \\
    & \lesssim \frac{2^J}{\alpha} \sqrt{\frac{J}{n}} + \varepsilon 2^J + 2^{J(1/2-\beta)},
\end{align*}
where the second line uses \eqref{sobem} and the last line is due to the choice $\sigma_J$ in \eqref{sigmaJ} and $\beta >1/2$. 
With the choice of $2^J \asymp \left(\frac{\log(n\alpha^2)}{n\alpha^2}\right)^{-\frac{1}{2\beta+1}} \wedge \varepsilon^{-\frac{2}{2\beta+1}} $, we have for any $f \in D^{\beta-1/2,\infty}_\infty$
\begin{align*}
    & \mathbb{E}[\|\hat{f} - f\|_{\infty}] \lesssim \frac{2^J}{\alpha} \sqrt{\frac{\log(n\alpha^2)}{n}} \sqrt{\frac{J}{\log(n\alpha^2)}} + \varepsilon 2^J + 2^{J(1/2-\beta)} \\
    \lesssim & \frac{2^J}{\alpha} \sqrt{\frac{\log(n\alpha^2)}{n}} + \varepsilon 2^J + 2^{J(1/2-\beta)} \lesssim \left(\frac{n\alpha^2}{\log(n\alpha^2)}\right)^{-\frac{2\beta-1}{4\beta+2}} \vee \varepsilon^{\frac{2\beta-1}{2\beta+1}},
\end{align*}
since $2^J \lesssim \left({n\alpha^2}\right)^{\frac{1}{2\beta+1}}$ and $J \lesssim \log(n\alpha^2)$. 

We then conclude the proof.
\end{proof}

\begin{lemma}[Lemma 1 in \cite{donoho1996density}]\label{donoho} Let $\theta(x) = \theta_\psi(x) = \sum_{k \in \mathbb{Z}}|\psi(x-k)|$ and $f(x) = \sum_{k \in \mathbb{Z}}\lambda_k 2^{j/2}\psi(2^jx-k)$. For $1 \leq p \leq \infty$, then 
\[
2^{j(1/2-1/p)}\|\lambda\|_{l_p} \lesssim\|f\|_p \lesssim 2^{j(1/2-1/p)}\|\lambda\|_{l_p}
\]
\end{lemma}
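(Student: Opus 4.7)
The plan is to prove the two-sided bound by first reducing the claim to the case $j=0$ via a change of variable, and then applying a Hölder-type argument in both directions, using the compact support of $\psi$ in an essential way.

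First, I would perform the substitution $y = 2^j x$, which turns $f$ into $g(y) = \sum_k \lambda_k \psi(y-k)$ up to a multiplicative factor: one checks that
\[
    \|f\|_p^p = 2^{-j} \, 2^{jp/2} \int |g(y)|^p \, \mathrm{d}y = 2^{j(p/2 - 1)} \|g\|_p^p,
\]
so that $\|f\|_p = 2^{j(1/2 - 1/p)} \|g\|_p$ (with the obvious modification for $p = \infty$). The lemma therefore reduces to showing $\|g\|_p \asymp \|\lambda\|_{\ell_p}$, which is a scale-free statement.

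For the upper bound $\|g\|_p \lesssim \|\lambda\|_{\ell_p}$, I would exploit the compact support of $\psi$, which ensures that $\theta(x) = \sum_k |\psi(x-k)|$ is bounded by a constant depending only on $\|\psi\|_\infty$ and the support radius. Writing $|g(x)| \leq \sum_k |\lambda_k| |\psi(x-k)|$ and applying Hölder's inequality with weight $|\psi(x-k)|$ (treating the sum over $k$ as an integral against this discrete measure), one obtains
\[
    |g(x)|^p \leq \|\theta\|_\infty^{p/p'} \sum_k |\lambda_k|^p |\psi(x-k)|,
\]
and integrating in $x$ gives $\|g\|_p^p \leq \|\theta\|_\infty^{p/p'} \|\psi\|_1 \|\lambda\|_{\ell_p}^p$. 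The cases $p=1$ and $p=\infty$ follow from Fubini and the bound $\|\theta\|_\infty < \infty$, respectively.

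For the lower bound $\|\lambda\|_{\ell_p} \lesssim \|g\|_p$, I would use biorthogonality: there exists a dual function $\tilde\psi$ (for an orthonormal wavelet, $\tilde\psi = \psi$) with compactly supported integer translates satisfying $\int \tilde\psi(x-k) \psi(x-k') \, \mathrm{d}x = \delta_{kk'}$, so that $\lambda_k = \int g(x) \tilde\psi(x-k) \, \mathrm{d}x$. Applying the same Hölder trick with $\tilde\theta(x) = \sum_k |\tilde\psi(x-k)|$ gives
\[
    \sum_k |\lambda_k|^p \leq \|\tilde\psi\|_1^{p/p'} \, \|\tilde\theta\|_\infty \, \|g\|_p^p,
\]
which is the desired inequality. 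The main obstacle is the lower bound, since it genuinely requires more than just compact support of $\psi$: one needs biorthogonality (or orthonormality) together with the fact that the dual function also has a well-localised overlap sum $\tilde\theta$. For the wavelets used in Section~\ref{sec3}, which are either orthonormal or standard biorthogonal constructions (see \cite{daubechies1992ten}), this is part of the standard framework and poses no difficulty.
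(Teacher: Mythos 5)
The paper does not prove this lemma at all: it imports it directly by citing Lemma~1 of \cite{donoho1996density}, so there is no in-paper proof for you to have matched. Your proof is correct, and it is in fact the standard argument used in the cited reference and in the wavelet literature generally: rescale to $j=0$, apply H\"{o}lder's inequality against the discrete measure $|\psi(x-k)|$ (with total mass $\theta(x) \leq \|\theta\|_\infty < \infty$, thanks to compact support and boundedness of $\psi$) to get the upper bound, and use biorthogonality $\lambda_k = \int g\,\tilde\psi(\cdot-k)$ together with the dual H\"{o}lder step to get the lower bound. Your remark that the lower bound genuinely needs more than compact support --- namely, the (bi)orthogonality of the integer translates and the finiteness of the dual overlap function $\tilde\theta$ --- is exactly the right point to flag, and you correctly note that this is satisfied by the orthonormal or standard biorthogonal wavelets used in Section~\ref{sec3}. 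The only minor stylistic point is that the endpoint cases $p=1,\infty$, which you dispatch by Fubini and by a sup bound, could be folded into the H\"{o}lder argument by convention, but your explicit treatment is clearer and perfectly valid.
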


\begin{lemma}\label{sub-exp}
Let $\tilde{\beta}_{jk} = \frac{1}{n}\sum_{i=1}^n \psi_{jk}(X_i)$, then
\[
\mathbb{E}_W\left[\max_{-1\leq j\leq J,k \in \mathcal{N}_j}\Big|\hat{\beta}_{jk} - \tilde{\beta}_{jk}\Big|\right] \lesssim \sigma_J\sqrt{\frac{J}{n}}
\]
\end{lemma}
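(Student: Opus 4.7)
\textbf{Proof plan for Lemma~\ref{sub-exp}.} The starting observation is the identity
\[
    \hat\beta_{jk} - \tilde\beta_{jk} = \frac{\sigma_J}{n}\sum_{i=1}^n W_{ijk},
\]
where the $W_{ijk}$ are i.i.d.\ standard Laplace random variables, independent of the data. Hence the conditional expectation $\mathbb{E}_W[\,\cdot\,]$ reduces to a problem about the supremum of $N$ centred empirical averages of i.i.d.\ standard Laplace variables, where $N = \sum_{j=-1}^{J} |\mathcal{N}_j| \lesssim 2^J$, so $\log N \lesssim J$.

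Next, I would establish a Bernstein-type tail bound for a single pair $(j,k)$. Since a standard Laplace $W$ has moment generating function $\mathbb{E}[e^{\lambda W}] = (1-\lambda^2)^{-1} \leq \exp(2\lambda^2)$ for $|\lambda|\leq 1/2$, the variable $W$ is sub-exponential with parameters $(\tau,b)$ independent of $n$ (in the paper's convention, $\tau^2 = 4$, $b = 2$). The rescaled sum $S_{jk} = (1/n)\sum_{i=1}^n W_{ijk}$ is therefore sub-exponential with parameters $(\tau/\sqrt{n}, b/n)$, yielding
\[
    \mathbb{P}\bigl(|S_{jk}| > t\bigr) \leq 2 \exp\bigl\{-c\,\min(n t^2, n t)\bigr\}
\]
for some absolute constant $c > 0$.

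I would then combine this with a union bound over the $N \lesssim 2^J$ pairs $(j,k)$, giving
\[
    \mathbb{P}\Bigl(\sigma_J \max_{j,k} |S_{jk}| > t\Bigr) \leq 2 \cdot 2^J \exp\Bigl\{-c\,\min\Bigl(\tfrac{nt^2}{\sigma_J^2}, \tfrac{nt}{\sigma_J}\Bigr)\Bigr\}.
\]
To conclude, I would integrate this tail bound via the layer-cake formula $\mathbb{E}_W[\max_{j,k} \sigma_J|S_{jk}|] = \int_0^\infty \mathbb{P}(\sigma_J \max_{j,k}|S_{jk}| > t)\,\mathrm{d}t$, splitting the integration at the threshold $t^\star \asymp \sigma_J \sqrt{J/n}$ where the Gaussian exponent matches the union-bound factor $J\log 2$. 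Below $t^\star$ the integral contributes $\lesssim \sigma_J\sqrt{J/n}$ trivially, while above $t^\star$ the tail decays fast enough that its contribution is also $\lesssim \sigma_J\sqrt{J/n}$.

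The main obstacle is controlling the crossover between the sub-Gaussian and sub-exponential regimes of Bernstein's bound. The sub-exponential regime would contribute an additional $\sigma_J J /n$ term, which is dominated by the desired $\sigma_J\sqrt{J/n}$ provided $J \lesssim n$. This condition is comfortably satisfied under the choice of $J$ in Theorem~\ref{infinity_risk}, where $2^J \lesssim n\alpha^2$, so $J \lesssim \log(n\alpha^2) \ll n$; this is the only place where the plan relies on an implicit size assumption on $J$.
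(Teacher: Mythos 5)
Your proposal is correct and follows essentially the same route as the paper's proof. Both proofs reduce the quantity to the maximum of centred averages of i.i.d.\ Laplace noise, observe that each average is sub-exponential with the appropriate parameters (the paper cites Vershynin's Theorem 2.8.2 for closure under linear combinations, you derive it from the Laplace moment generating function), and then control the maximum; the paper directly invokes a textbook maximal inequality for sub-exponential families (Corollary 2.6 of Boucheron--Lugosi--Massart), whereas you re-derive that maximal inequality from scratch via Bernstein tail bound plus union bound plus layer-cake integration. Both routes land on the intermediate bound $\sigma_J\sqrt{J/n} + \sigma_J J/n$.

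One thing your writeup handles more carefully than the paper does: the paper silently drops the sub-exponential correction term $\sigma_J J/n$ and presents only $\sigma_J\sqrt{J/n}$, without stating the condition $J \lesssim n$ needed for that term to be dominated. You flag this explicitly and check it against the choice $2^J \lesssim n\alpha^2$ in Theorem~\ref{infinity_risk}, which gives $J \lesssim \log(n\alpha^2) \ll n$ under the standing assumption that $n\alpha^2 \to \infty$. That is a genuine gap in the paper's presentation of the lemma (the lemma as stated with no restriction on $J$ is not literally true), and it is good that you identified it, but it does not affect correctness in the regime where the lemma is invoked.
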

\begin{proof}
Note that $\hat{\beta}_{jk} - \tilde{\beta}_{jk} = \frac{1}{n}\sigma_J \sum_{i=1}^nW_{ijk}$ and $\sigma_JW_{ijk}$ is a sub-exponential random variable with parameters ($2 \sigma_J, 2\sigma_J$) according to the definition of sub-Exponential random variables. Since linear combination preserves sub-exponential property \citep[e.g.\ Theorem 2.8.2 in][]{vershynin2018high}, we have for each fixed $-1\leq j \leq J, k \in \mathcal{N}_j$, $\hat{\beta}_{jk} - \tilde{\beta}_{jk}$ is a sub-exponential random variable with parameters $\left(\frac{
2\sigma_J}{\sqrt{n}},\frac{2\sigma_J}{n}\right)$. Now, applying standard maximal inequality for sub-exponential random variables \citep[e.g.\ Corollary 2.6 in][]{boucheron2013concentration} yields 
\[
\mathbb{E}_W\left[\max_{-1\leq j\leq J,k \in \mathcal{N}_j}\Big|\hat{\beta}_{jk} - \tilde{\beta}_{jk}\Big|\right] \lesssim \sigma_J \sqrt{\frac{\log(S)}{n}} + \sigma_J \frac{\log(S)}{n}
\]
where $S = \sum_{j \leq J}|\mathcal{N}_j|$ is the total number of terms considered. Since each $|\mathcal{N}_j| \leq 2^j + 2A + 1$ we have $S \lesssim 2^J$, which leads to the claimed result. 
\end{proof}

\begin{lemma}\label{L4}

\[
\mathbb{E}\left[\max_{-1\leq j\leq J,k \in \mathcal{N}_j}\Big|\hat{\beta}_{jk} - {\beta}_{jk}\Big|\right] \lesssim \sigma_J\sqrt{\frac{J}{n}} + 2^{J/2}\sqrt{\frac{J}{n}} + \varepsilon 2^{J/2}
\]
\end{lemma}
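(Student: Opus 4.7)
The plan is to decompose $\hat{\beta}_{jk} - \beta_{jk}$ into three pieces corresponding to the privacy noise, the centred empirical fluctuation of the wavelet coefficient, and the bias induced by Huber contamination, and then bound the maximum (over $-1\leq j \leq J$, $k \in \mathcal{N}_j$) of each piece separately via the triangle inequality.

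Write $\tilde{\beta}_{jk} = n^{-1}\sum_{i=1}^n \psi_{jk}(X_i)$ as in Lemma~\ref{sub-exp}, and decompose
\[
    \hat{\beta}_{jk} - \beta_{jk} = \bigl(\hat{\beta}_{jk} - \tilde{\beta}_{jk}\bigr) + \bigl(\tilde{\beta}_{jk} - \mathbb{E}\tilde{\beta}_{jk}\bigr) + \bigl(\mathbb{E}\tilde{\beta}_{jk} - \beta_{jk}\bigr).
\]
The maximum of the first term over $(j,k)$ has expectation of order $\sigma_J \sqrt{J/n}$ by Lemma~\ref{sub-exp}, so nothing new is required there.

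For the second, centred empirical term, the key quantitative inputs are the uniform bound $\|\psi_{jk}\|_\infty \leq 2^{j/2}\|\psi\|_\infty \leq 2^{J/2}\|\psi\|_\infty$ for $j \leq J$, together with the variance bound $\mathrm{Var}(\psi_{jk}(X_i)) \leq \mathbb{E}[\psi_{jk}(X_i)^2] \leq \|\psi_{jk}\|_\infty^2 \lesssim 2^J$. Applying Bernstein's inequality to each fixed $(j,k)$ and then taking a union bound over the at most $\sum_{j \leq J}|\mathcal{N}_j| \lesssim 2^J$ indices (so that the logarithmic factor is of order $J$) gives, via the standard conversion of tail bounds into moment bounds,
\[
    \mathbb{E}\Bigl[\max_{-1 \leq j \leq J,\, k \in \mathcal{N}_j} \bigl|\tilde{\beta}_{jk} - \mathbb{E}\tilde{\beta}_{jk}\bigr|\Bigr] \lesssim 2^{J/2}\sqrt{J/n} + 2^{J/2} J/n \lesssim 2^{J/2}\sqrt{J/n},
\]
where the final simplification absorbs the lower-order Bernstein term into the first summand (this is the regime of interest since we will later pick $2^J$ so that $J/n \ll 1$).

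Finally, for the deterministic contamination bias term, since $P_{f_\varepsilon} = (1-\varepsilon)P_f + \varepsilon G$ we have
\[
    \mathbb{E}\tilde{\beta}_{jk} - \beta_{jk} = \varepsilon\Bigl(\int \psi_{jk}\,\mathrm{d}G - \beta_{jk}\Bigr),
\]
and both $|\int \psi_{jk}\,\mathrm{d}G|$ and $|\beta_{jk}|$ are bounded by $\|\psi_{jk}\|_\infty \lesssim 2^{J/2}$, yielding $\max_{j,k}|\mathbb{E}\tilde{\beta}_{jk} - \beta_{jk}| \lesssim \varepsilon \, 2^{J/2}$. Summing the three contributions by the triangle inequality produces the claimed bound. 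The main obstacle is the centred empirical term, where some care is needed in invoking Bernstein uniformly over the index set and in verifying that the regime we apply the lemma in lets the quadratic remainder be absorbed; the noise and bias pieces are then essentially immediate.
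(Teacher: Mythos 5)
Your decomposition and handling of the first term (via Lemma~\ref{sub-exp}) and third term (contamination bias, bounded by $\varepsilon 2^{J/2}$) match the paper's proof essentially verbatim. For the middle empirical-fluctuation term the paper applies the sub-Gaussian maximal inequality directly to the bounded summands $\psi_{jk}(X_i)$ (sub-Gaussian parameter $\lesssim 2^{j/2}\|\psi\|_\infty/\sqrt{n}$ for the average), yielding $2^{J/2}\sqrt{J/n}$ cleanly, whereas your Bernstein route with the crude variance bound $\mathrm{Var}(\psi_{jk}(X_i)) \lesssim 2^J$ reproduces the same leading term plus a $2^{J/2}J/n$ remainder that — as you correctly observe — is absorbed whenever $J \lesssim n$, which holds in the regime where the lemma is actually invoked (Theorem~\ref{infinity_risk} takes $2^J \lesssim (n\alpha^2)^{1/(2\beta+1)}$ so $J \lesssim \log n$).
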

\begin{proof}
A direct decomposition leads to 
\begin{align*}
& \max_{-1\leq j\leq J,k \in \mathcal{N}_j}\Big|\hat{\beta}_{jk} - {\beta}_{jk}\Big| \\
\leq & \max_{-1\leq j\leq J,k \in \mathcal{N}_j}\Big|\hat{\beta}_{jk} - \tilde{\beta}_{jk}\Big| + \max_{-1\leq j\leq J,k \in \mathcal{N}_j}\Big|\tilde{\beta}_{jk} - \mathbb{E}[\tilde{\beta}_{jk}]\Big|+\max_{-1\leq j\leq J,k \in \mathcal{N}_j}\Big|\mathbb{E}[\tilde{\beta}_{jk}] - {\beta}_{jk}\Big|.
\end{align*}
The first term is dealt with in Lemma \ref{sub-exp}. For the second term, we notice that for each $\tilde{\beta}_{jk}$, it has sub-Gaussian parameter bounded by $\frac{2^{j/2}\|\psi\|_{\infty}}{\sqrt{n}} \vee \frac{\|\phi\|_\infty}{\sqrt{n}}$. We assume without loss of generality that $\|\phi\|_\infty \leq \|\psi\|_\infty$ and use maximal inequality for sub-Gaussian random variables \citep[e.g.\ Theorem 2.5 in][]{boucheron2013concentration} to obtain 
\begin{equation}\label{second-term}
    \mathbb{E}_{f_\varepsilon}\left[\max_{-1\leq j\leq J,k \in \mathcal{N}_j}\Big|\tilde{\beta}_{jk}- \mathbb{E}[\tilde{\beta}_{jk}]\Big|\right] \lesssim 2^{J/2}\|\psi\|_{\infty}\sqrt{\frac{J}{n}}.
\end{equation}
For the last term, we note that by the definition of $D^{\beta-1/2,\infty}_\infty$, we have $\max_{-1\leq j\leq J,k \in \mathcal{N}_j} |\beta_{jk}| \lesssim 1$. Therefore, we have 
\begin{align}
    & \max_{-1\leq j\leq J,k \in \mathcal{N}_j}\Big|\mathbb{E}_{f_\varepsilon}[\tilde{\beta}_{jk}] - {\beta}_{jk}\Big| = \max_{-1\leq j\leq J,k \in \mathcal{N}_j} \varepsilon  \Big|\mathbb{E}_g[\tilde{\beta}_{jk}] -  {\beta}_{jk}\Big| = \max_{-1\leq j\leq J,k \in \mathcal{N}_j} \varepsilon \Big|\mathbb{E}_g[\psi_{jk}] - {\beta}_{jk}\Big| \nonumber \\
    \leq & \max_{-1\leq j\leq J,k \in \mathcal{N}_j} \varepsilon \Big|\mathbb{E}_g[\psi_{jk}]\Big| + \varepsilon\max_{-1\leq j\leq J,k \in \mathcal{N}_j} |\beta_{jk}| \leq \varepsilon 2^{J/2}\|\psi\|_{\infty} \label{last}
\end{align} 
Combining (\ref{second-term}), \eqref{last} and Lemma \ref{sub-exp}, we obtain the claimed result. 

\end{proof}

\subsection[]{Additional details in \Cref{sec3}}\label{addition4}

We provide the detailed privacy mechanism used to generate \eqref{fhat}.  This is from Section 4.2.3 in \cite{duchi2018minimax}, see where for more details.  We present it here for completeness.  Given a vector $v \in \mathbb{R}^k$ with $\|v\|_{\infty} \leq B_0 = \sqrt{2}$, generate the private version $Z$ of~$v$ in two steps.  Recall that $\sqrt{2}$ is an upper bound of the supremum norm of the trignometric basis functions.  Generally speaking, for uniformly bounded basis functions, $B_0$ corresponds to the supremum norm of the chosen basis functions.
\begin{itemize}
    \item [Step 1.]  Generate $\widetilde{V}$ according to
    \[
        \mathbb{P}(\tilde{V} = B_0) = 1 - \mathbb{P}(\tilde{V} = -B_0) = \frac{1}{2}+\frac{v_j}{2B_0}.
    \]
    \item [Step 2.]  Let $T$ be a Bernoulli($e^\alpha/(e^\alpha+1)$) random variable independent of $\widetilde{V}$ and the data.  Generate $Z$ according to 
    \[
        Z \sim \begin{cases}
            \mathrm{Uniform}\Big(z \in \{-B,B\}^k \Big|\langle z,\tilde{V} \rangle \geq 0\Big), & T = 1, \\
            \mathrm{Uniform}\Big(z \in \{-B,B\}^k \Big|\langle z,\tilde{V} \rangle \leq 0\Big), & T = 0,
    \end{cases}
    \]
    where 
    \[
        B = B_0C_d \frac{e^\alpha+1}{e^\alpha-1} \quad \text{and} \quad C_d^{-1} =\begin{cases}  \frac{1}{2^d-1} \binom{d-1}{(d-1)/2}, & d \equiv 1 \Mod 2, \\
        \frac{2}{2^d+ \binom{d}{d/2}} \binom{d-1}{d/2}, & d \equiv 0 \Mod 2. 
        \end{cases}
    \]
\end{itemize}

\section{Robust one-dimensional median estimation under local differential privacy}
\label{secE}

Recall the general setup in \Cref{sec-general-setup}, in this section we consider distributions supported on the real line, i.e.~$\mathcal{X} = \mathbb{R}$, with finite first central moment and median lying in $[-r, r]$.  We are interested in estimating the median, i.e.~$\theta(P) = \mathrm{med}(P)$.  To be specific, we let
\begin{equation}\label{eq-median-classr}
       \mathcal{P}_r = \{\mbox{distribution } P \mbox{ such that  } |\mathrm{med}(P)| \leq r, \, \mathbb{E}_P\{|X|\} < \infty\},
\end{equation}
where $r > 0$ is potentially a function of the sample size $n$.

This problem was studied in Section~3.2.2 in \cite{duchi2018minimax}, after observing that the robust mean estimation studied in Section 3.2.1 in \cite{duchi2018minimax} (and a robust counterpart studied in \Cref{sec2}) becomes harder when the moment parameter $k$ decreases.  In \cite{duchi2018minimax}, when the moment parameter $k = 1$, i.e.~a potentially rather heavy-tailed case, the focus is shifted to the median estimation, instead of estimating mean -- a non-robust quantity.  The privacy mechanism studied thereof is a sequentially interactive stochastic gradient decent method.  Since median estimation is impossible for general distribution, the minimax optimality is studied based on an excess risk.

Medians are naturally robust quantities, especially robust against heavy-tailedness.  In this section, we carry on the narrative of this paper, estimating medians subject to both the privacy constraints and Huber's contamination.  To be specific, we assume the data are generated from Huber's contamination model \eqref{hubermodel}.  Let $\{Y_i\}_{i = 1}^n$ be i.i.d.~random variables with distribution $P_{r, \varepsilon} \in \mathcal{P}_{\varepsilon}(\mathcal{P}_r)$ and suppose that we are interested in estimating the median of the inlier distribution $\theta$.  The $\alpha$-LDP minimax risk takes its specific form in the robust median estimation problem as follows
    \begin{align}
        \mathcal{R}_{n, \alpha}(\varepsilon) & = \inf_{Q \in \mathcal{Q}_{\alpha}} \inf_{\widehat{\theta}} \sup_{\substack{P_{r, \varepsilon} \in \mathcal{P}_{\varepsilon} (\mathcal{P}_r)}} \mathbb{E}_{P_{r, \varepsilon}, Q} \big\{R(\widehat{\theta}) - R(\theta(P))\big\} \nonumber \\
        & = \inf_{Q \in \mathcal{Q}_{\alpha}} \inf_{\widehat{\theta}} \sup_{P_{r, \varepsilon} \in \mathcal{P}_{\varepsilon} (\mathcal{P}_r)} \mathbb{E}_{P_{r, \varepsilon}, Q} \big\{\mathbb{E}_{X \sim P}\{|X - \widehat{\theta}(Z_1, \ldots, Z_n)|\} - \mathbb{E}_{X \sim P}\{|X - \theta(P)|\}\big\}, \label{eq-excess-risk-minimax}
    \end{align}
where 
    \begin{equation}\label{eq-excess-risk-def}
        R(\cdot) = \mathbb{E}_{X \sim P}\{|X-\cdot|\}
    \end{equation} 
    is the excess risk defined on the inlier distribution $P$, the infimum over $\widehat{\theta}$ is taken over all measurable functions of the privatised data generated from any Huber contamination model $P_{r, \varepsilon}$ and some privacy mechanism $Q \in \mathcal{Q}_{\alpha}$.  Note that this is slightly different from the $\alpha$-LDP minimax risk defined \eqref{intro_minimax}, since we are interested in the excess risk here.

\subsection{Lower bound}\label{sec-lb-median}

In \Cref{prop-medianlowerbound} we show that, if either the privacy cost is high that $n\alpha^2 \lesssim r$ or the contamination proportion is high that $r \varepsilon \gtrsim 1$, then $\mathcal{R}_{n, \alpha}(\varepsilon) \gtrsim 1$.

\begin{prop}\label{prop-medianlowerbound}
Let $\{Y_i\}_{i = 1}^n$ be i.i.d.~random variables from $P_{r, \varepsilon} \in \mathcal{P}_{\varepsilon}(\mathcal{P}_r)$, with $\mathcal{P}_r$ defined in~\eqref{eq-median-classr}.  For $\alpha \in (0, 1]$ and $r > 0$, it holds that the $\alpha$-LDP minimax excess risk defined in \eqref{eq-excess-risk-minimax} satisfies 
\[
\mathcal{R}_{n, \alpha}(\varepsilon) \gtrsim \frac{r}{\sqrt{n \alpha^2}} \vee r\varepsilon. 
\]
\end{prop}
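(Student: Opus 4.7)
The plan is to derive both halves of the bound from a single two-point Le Cam construction, analysed in two different regimes of the tuning parameter to isolate the privacy cost and the contamination cost. For $\eta \in (0, 1/2)$ to be chosen, set
\begin{align*}
    P_0 = (1/2 + \eta) \delta_{-r} + (1/2 - \eta) \delta_{r}, \qquad P_1 = (1/2 - \eta) \delta_{-r} + (1/2 + \eta) \delta_{r},
\end{align*}
both lying in $\mathcal{P}_r$, with medians $\theta(P_0) = -r$, $\theta(P_1) = r$ and $\mathrm{TV}(P_0, P_1) = 2\eta$. Writing $R_j(t) = \mathbb{E}_{X \sim P_j}|X - t|$, a short piecewise-linear calculation gives the identity $R_0(t) + R_1(t) = 2r$ for every $t \in [-r, r]$, and shows that projecting any estimator $\hat\theta$ to $[-r, r]$ only decreases both $R_0(\hat\theta) - R_0(-r)$ and $R_1(\hat\theta) - R_1(r)$. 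Hence, for every $\hat\theta$,
\[
    \{R_0(\hat\theta) - R_0(-r)\} + \{R_1(\hat\theta) - R_1(r)\} \geq 4 r \eta.
\]

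For the privacy contribution I would use this sum inequality in a private two-point argument. Writing $QP_j^n$ for the distribution of the privatised sample when $X_1, \dots, X_n$ are i.i.d.\ from $P_j$, the standard coupling through $\min(QP_0^n, QP_1^n)$ gives
\[
    \sum_{j = 0}^1 \mathbb{E}_{P_j, Q}\{R_j(\hat\theta) - R_j(\theta(P_j))\} \geq 4 r \eta \, \bigl(1 - \mathrm{TV}(QP_0^n, QP_1^n)\bigr).
\]
Corollary~3 of \cite{duchi2018minimax} together with Pinsker's inequality bounds $\mathrm{TV}(QP_0^n, QP_1^n) \lesssim \alpha \sqrt{n} \cdot \mathrm{TV}(P_0, P_1) \asymp \alpha \sqrt{n} \, \eta$, so choosing $\eta \asymp 1/\sqrt{n\alpha^2}$ small enough keeps the bracket bounded away from zero. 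Since $P_0, P_1 \in \mathcal{P}_r \subseteq \mathcal{P}_\varepsilon(\mathcal{P}_r)$ for every $\varepsilon \geq 0$, this delivers $\mathcal{R}_{n, \alpha}(\varepsilon) \geq \mathcal{R}_{n, \alpha}(0) \gtrsim r/\sqrt{n\alpha^2}$, with the regime $n\alpha^2 \lesssim 1$ covered by a constant-$\varepsilon$ instance of the contamination bound below.

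For the contamination contribution I would instead set $\eta = \varepsilon/\{2(1-\varepsilon)\}$, so that $\mathrm{TV}(P_0, P_1) = \varepsilon/(1-\varepsilon)$. By \Cref{2eps} there exist $G_0, G_1$ with $\widetilde{P} := (1-\varepsilon) P_0 + \varepsilon G_0 = (1-\varepsilon) P_1 + \varepsilon G_1$, so $\widetilde{P} \in \mathcal{P}_\varepsilon(P_0) \cap \mathcal{P}_\varepsilon(P_1)$ and a sample drawn under $\widetilde P$ carries, for any $\alpha$-LDP mechanism $Q$, no information distinguishing the two inlier distributions. Taking $P_{r, \varepsilon} = \widetilde{P}$ in both suprema in \eqref{eq-excess-risk-minimax} and averaging the two resulting excess risks at a common $\hat\theta$, the sum inequality from the first paragraph yields $\mathcal{R}_{n, \alpha}(\varepsilon) \geq 2 r \eta \gtrsim r \varepsilon$ for $\varepsilon \in [0, 1/2]$; the regime $\varepsilon \in (1/2, 1]$ follows from the monotonicity $\mathcal{R}_{n, \alpha}(\varepsilon) \geq \mathcal{R}_{n, \alpha}(1/2)$, which holds because $\mathcal{P}_{1/2}(\mathcal{P}_r) \subseteq \mathcal{P}_\varepsilon(\mathcal{P}_r)$ for $\varepsilon \geq 1/2$.

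The main obstacle is that the loss of interest is the excess risk, which is not a semi-metric on $\Theta$, so \Cref{general_lower} does not apply directly and the two-point lower bound must be re-derived for this loss. The simplification that makes the analysis clean is the symmetry identity $R_0(t) + R_1(t) = 2r$ on $[-r, r]$, which arises because $P_0$ and $P_1$ share the support $\{-r, r\}$: it converts the asymmetric excess-risk loss into the familiar symmetric two-hypothesis framework and lets the same construction carry both halves of the bound.
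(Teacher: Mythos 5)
Your proposal is correct and follows essentially the same route as the paper's: both hinge on the two-point construction with symmetric masses on $\{-r,r\}$, the observation that the excess risk is linear in $\theta$ on $[-r,r]$ so that the sum (or maximum) of the two excess risks is bounded below by a multiple of $r\cdot\mathrm{TV}(P_0,P_1)$, and the same privacy and contamination machinery. The only differences are cosmetic: you re-derive the $r/\sqrt{n\alpha^2}$ term via Le Cam plus Corollary~3 of \citet{duchi2018minimax}, whereas the paper simply cites Corollary~2 of that reference; and for the contamination term you invoke Lemma~\ref{2eps} to produce the common element $\widetilde P$, whereas the paper exhibits the contaminating distribution explicitly as a point mass at $r$ (i.e.\ $R_0=(1-\varepsilon)R_1+\varepsilon\delta_r$). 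One small caveat: your parenthetical remark that the regime $n\alpha^2\lesssim 1$ is ``covered by a constant-$\varepsilon$ instance of the contamination bound'' does not actually cover it, since $\varepsilon$ is given rather than adjustable; this regime is instead excluded by the paper's standing assumption that $n\alpha^2\to\infty$, under which the requirement $\eta\leq 1/2$ in your two-point argument is automatically satisfied.
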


We reiterate that the minimax risk considered here is regarding a form of excess risk, rather than the estimation risk.  This handicaps a direct application of \Cref{general_lower} on the part of the lower bound when there is no privacy constraints.  We provide a detailed proof in \Cref{sec-median-proofs}.  Nevertheless, the result of \Cref{prop-medianlowerbound} shares the same spirit of \Cref{general_lower}, that the minimax risk can be disentangled into the case of privacy and contamination.  The term $r(n\alpha^2)^{-1/2}$ is indeed the minimax rate shown in \cite{duchi2018minimax} for the case when there is only privacy constraints.

\subsection{Upper bound}   

In this section, we consider the stochastic gradient descent (SGD) procedure as used in \cite{duchi2018minimax}.  It is shown in \cite{duchi2018minimax} that this sequentially interactive privacy mechanism is optimal when estimating medians.  In view of Q2 we raised in \Cref{sec-contributions}, we will show that this locally private procedure is automatically robust in the univariate median estimation problem.

Let $\theta_1 \in [-r, r]$ be arbitrary and $\{W_i\}_{i = 1}^n$ be a sequence of i.i.d.~random variables with 
    \[
        \mathbb{P}\{W_1 = 1\} = 1 - \mathbb{P}\{W_1 = -1\} = \frac{e^{\alpha}}{e^{\alpha} + 1}.
    \]
    The SGD procedure iterates according to 
    \begin{equation}\label{eq-sgd-ldp}
        \theta_{i+1} = [\theta_i - \eta_i Z_i]_r, 
    \end{equation}
    where
    \[
        Z_i = \frac{e^{\alpha}+1}{e^{\alpha}-1} W_i s(\theta_i - Y_i) \quad \mbox{and} \quad s(\theta_i - Y_i) \begin{cases}   
            = \mathrm{sign}(\theta_i - Y_i), & \theta_i \neq Y_i, \\
            \sim \mathrm{Unif}\{-1, 1\}, & \theta_i = Y_i,
        \end{cases}
    \]    
    with the truncation operator $[\cdot]_r = \max\{-r, \, \min\{\cdot, \, r\}\}$ and non-increasing step sizes sequence $\eta_i > 0$, $i = 1, \ldots, n$.  Define the estimator $\widehat{\theta}$ as
    \begin{equation}\label{eq-median-thetahat}
         \widehat{\theta} = \frac{1}{\sum_{j = 1}^n \eta_j} \sum_{i = 1}^n \eta_i \theta_i.
    \end{equation}

\begin{remark} 
We have that $\{Z_i\}_{i = 1}^n$ satisfy $\alpha$-LDP, by noticing that
\[
    \mathbb{P}\left\{Z_i = \frac{e^{\alpha} + 1}{e^{\alpha} - 1}\bigg| \theta_i(Z_{1:(i-1)}), Y_i\right\} = \frac{e^{\alpha}}{e^{\alpha} + 1} \mathbbm{1}\{\theta_i > Y_i\} + \frac{1}{e^{\alpha} + 1} \mathbbm{1}\{\theta_i < Y_i\} + \frac{1}{2} \mathbbm{1}\{\theta_i = Y_i\}
\]
and
\[
    \mathbb{P}\left\{Z_i = - \frac{e^{\alpha} + 1}{e^{\alpha} - 1}\bigg| \theta_i(Z_{1:(i-1)}), Y_i\right\} = \frac{1}{e^{\alpha} + 1} \mathbbm{1}\{\theta_i > Y_i\} + \frac{e^{\alpha}}{e^{\alpha} + 1} \mathbbm{1}\{\theta_i < Y_i\} + \frac{1}{2} \mathbbm{1}\{\theta_i = Y_i\}. 
\]
\end{remark}

\begin{prop}\label{prop-medianupperbound}
Given i.i.d.~random variables $\{Y_i\}_{i=1}^n$ with distribution $P_{r, \varepsilon} = (1-\varepsilon)P+\varepsilon G$ where $P \in \mathcal{P}_r$ defined in \eqref{eq-median-classr}, $G$ is an arbitrary distribution supported on $\mathbb{R}$ and $\varepsilon \in (0,1)$.  For $\alpha \in (0, 1)$, the estimator $\widehat{\theta}$ defined in \eqref{eq-median-thetahat} with constant step size $\eta_i = \alpha r /\sqrt{n}$, $i= 1, \ldots, n$, satisfies that 
\[
\mathbb{E}_{\{Y_i, W_i\}_{i = 1}^n} \big\{R(\widehat{\theta}) - R(\theta(P))\big\} \lesssim \frac{r}{\sqrt{n\alpha^2}} \vee \varepsilon r,
\]
where $R(\cdot)$ is the excess risk defined in \eqref{eq-excess-risk-def}.
\end{prop}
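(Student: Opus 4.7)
\textbf{Proof plan for Proposition \ref{prop-medianupperbound}.} The strategy is to recognise that the SGD iteration actually performs a projected stochastic subgradient descent on the \emph{contaminated} population excess risk
\[
R_\varepsilon(\theta) = \mathbb{E}_{Y \sim P_{r,\varepsilon}}|Y-\theta| = (1-\varepsilon) R(\theta) + \varepsilon R_G(\theta), \qquad R_G(\theta) = \mathbb{E}_{Y \sim G}|Y-\theta|,
\]
on the interval $[-r,r]$, and then to translate the resulting optimisation guarantee into a statement about the clean excess risk $R$ by paying a Lipschitz price in $\varepsilon$. Both $R_\varepsilon$ and $R_G$ are convex and $1$-Lipschitz on $\mathbb{R}$.

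The first step is to verify the stochastic subgradient property. Since $W_i$ is independent of $(\theta_i, Y_i)$ with $\mathbb{E}[W_i]=(e^\alpha-1)/(e^\alpha+1)$, we have $\mathbb{E}[Z_i\mid \theta_i, Y_i]=s(\theta_i-Y_i)$ (with the uniform-on-$\{\pm 1\}$ convention handling the null event $\{\theta_i=Y_i\}$). Taking a further expectation with $Y_i\sim P_{r,\varepsilon}$ independently of the history $\mathcal{F}_{i-1}=\sigma(Y_1,W_1,\dots,Y_{i-1},W_{i-1})$ yields
\[
\mathbb{E}[Z_i\mid\mathcal{F}_{i-1}] = \mathbb{P}_{Y\sim P_{r,\varepsilon}}(Y<\theta_i)-\mathbb{P}_{Y\sim P_{r,\varepsilon}}(Y>\theta_i) \in \partial R_\varepsilon(\theta_i).
\]
The iterate $\theta_{i+1}=[\theta_i-\eta_i Z_i]_r$ is therefore a genuine projected stochastic subgradient step on $R_\varepsilon$. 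Moreover $|Z_i|=(e^\alpha+1)/(e^\alpha-1)\leq C/\alpha$ for $\alpha\in(0,1)$ and some absolute constant~$C$.

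The second step applies the textbook projected-SGD bound. For a convex function $f$ on an interval of diameter $D=2r$, with stochastic subgradients satisfying $|g_i|\leq G$ and constant step size $\eta$, the telescoping/martingale argument gives
\[
\mathbb{E}\!\left[f\!\left(\tfrac{1}{n}\sum_{i=1}^n \theta_i\right)-\min_{\theta\in[-r,r]}f(\theta)\right] \leq \frac{D^2}{2\eta n}+\frac{\eta G^2}{2}.
\]
Plugging in $D=2r$, $G\asymp 1/\alpha$ and $\eta=\alpha r/\sqrt{n}$ yields, for $\widehat\theta=n^{-1}\sum_i\theta_i$,
\[
\mathbb{E}\!\left[R_\varepsilon(\widehat\theta)\right]-\min_{\theta\in[-r,r]}R_\varepsilon(\theta) \;\lesssim\; \frac{r}{\sqrt{n\alpha^2}}.
\]

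The final step converts this contaminated excess risk back to the clean one. Because $\theta(P)\in[-r,r]$, we have $\min_{[-r,r]}R_\varepsilon\leq R_\varepsilon(\theta(P))$, and using $R_\varepsilon=(1-\varepsilon)R+\varepsilon R_G$,
\[
(1-\varepsilon)\bigl\{R(\widehat\theta)-R(\theta(P))\bigr\} \;\leq\; \bigl\{R_\varepsilon(\widehat\theta)-R_\varepsilon(\theta(P))\bigr\} + \varepsilon\,\bigl|R_G(\widehat\theta)-R_G(\theta(P))\bigr| \;\leq\; \bigl\{R_\varepsilon(\widehat\theta)-R_\varepsilon(\theta(P))\bigr\} + 2\varepsilon r,
\]
where the second inequality uses the $1$-Lipschitzness of $R_G$ together with $\widehat\theta,\theta(P)\in[-r,r]$. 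Combining with the SGD bound gives the advertised $r/\sqrt{n\alpha^2}\vee\varepsilon r$ rate; the factor $1/(1-\varepsilon)$ is harmless since for $\varepsilon\geq 1/2$ the trivial bound $R(\widehat\theta)-R(\theta(P))\leq 2r\leq 4\varepsilon r$ already suffices. The main technical point is this last translation: the SGD iteration is necessarily biased by the contamination, and the $\varepsilon r$ term captures exactly the cost of that bias via the $1$-Lipschitz cushion provided by $R_G$. Everything else is a careful bookkeeping exercise combined with a textbook convex-optimisation estimate.
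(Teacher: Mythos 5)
Your proof is correct and delivers the same rate, but the organisation is genuinely different from the paper's and a bit cleaner. The paper works with the clean median $\theta(P)$ as the reference throughout the telescoping bound on $a_j = \tfrac12\mathbb{E}\{(\theta_j-\theta(P))^2\}$: after the standard projection inequality, it decomposes the conditional subgradient expectation $\mathbb{E}_Y[(\theta_j-\theta)\{\mathbb{P}_{Y_j|\theta_j}(Y_j<\theta_j)-\mathbb{P}_{Y_j|\theta_j}(Y_j>\theta_j)\}]$ into a $(1-\varepsilon)$-weighted clean part (linked to the clean excess risk via convexity) plus an $\varepsilon$-weighted contamination part that is bounded crudely by $2\varepsilon r\eta_j/(1-\varepsilon)$ at every step; summing and optimising $\eta$ then gives the result. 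You instead observe that the iteration is an honest projected stochastic subgradient method for the \emph{contaminated} risk $R_\varepsilon$ (the verification that $\mathbb{E}[Z_i\mid\mathcal{F}_{i-1}]\in\partial R_\varepsilon(\theta_i)$ and that $|Z_i|\lesssim 1/\alpha$ is exactly right), invoke the textbook bound to control $\mathbb{E}\{R_\varepsilon(\widehat\theta)\}-R_\varepsilon(\theta(P))\lesssim r/\sqrt{n\alpha^2}$, and only then translate to the clean risk via the identity $R_\varepsilon=(1-\varepsilon)R+\varepsilon R_G$ and the $1$-Lipschitzness of $R_G$ on $[-r,r]$, incurring the $\varepsilon r$ cost once rather than per step. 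The two arguments are mathematically equivalent (the same telescoping underlies the black-box bound), but your modular framing makes transparent \emph{why} the contamination cost decouples: SGD faithfully minimises $R_\varepsilon$ and contamination simply perturbs the objective by a $1$-Lipschitz function. Your handling of the $(1-\varepsilon)^{-1}$ factor and the $\varepsilon\geq 1/2$ corner case via the trivial $2r$ bound is also fine.
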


The upper bound we obtained in \Cref{prop-medianupperbound}, combining with the lower bound result in \Cref{prop-medianlowerbound}, shows that the private SGD procedure retains minimax optimality in the presence of contamination, in a univariate median estimation problem.

\subsection[]{Proofs of the results in \Cref{secE}}\label{sec-median-proofs}

\begin{proof}[Proof of \Cref{prop-medianlowerbound}]
It follows from Corollary 2 in \cite{duchi2018minimax} that $\mathcal{R}_{n, \alpha}(\varepsilon) \gtrsim r/\sqrt{n\alpha^2}$ when $\varepsilon = 0$. It remains to show the other part in the lower bound. 

Consider two distributions
\[
    R_0: \, \mathbb{P}(X = r) = \frac{1 + \delta}{2} = 1 - \mathbb{P}(X = -r)
\]
and
\[
    R_1: \, \mathbb{P}(X = r) = \frac{1 - \delta}{2} = 1 - \mathbb{P}(X = -r).
\]
We have that 
\[
    \theta(R_0) = r, \quad \theta(R_1) = -r \quad \mbox{and} \quad \mathrm{TV}(R_0, R_1) = \delta.
\]
For such $R_0,R_1$ the risk $\mathbb{E}_{X \sim R}(|X-\theta|)$ is always minimised for $\theta \in [-r,r]$, and for such $\theta$ we have
\[
    \mathbb{E}_{X \sim R_0}(|X-\theta|) = r-\delta \theta \quad \text{and} \quad  \mathbb{E}_{X \sim R_1}(|X-\theta|) = r+\delta \theta.
\]
Note also that we can write
\[
    R_0 = (1-\varepsilon) R_1 + \varepsilon P_{r},
\]
where $P_{r}$ is a point mass at $r$ and $\varepsilon = 2\delta/(1+\delta)$. Here $\delta=\varepsilon/(2-\varepsilon)$ and it follows that
\begin{align*}
    &\mathcal{R}_{n, \alpha}(\varepsilon) \geq \inf_{\hat{\theta}} \max \bigl[ \mathbb{E}_{Y_1,\ldots,Y_n \sim R_0} \mathbb{E}_{X \sim R_0}\{ |\hat{\theta}(Y_1,\ldots,Y_n) - X| - |r-X|), \\
    & \hspace{120pt} \mathbb{E}_{Y_1,\ldots,Y_n \sim R_0} \mathbb{E}_{X \sim R_1}\{|\hat{\theta}(Y_1,\ldots,Y_n) - X| - |r+X|\} \bigr] \\
    & = \inf_{\hat{\theta}} \max \bigl\{ \mathbb{E}_{Y_1,\ldots,Y_n \sim R_0} \mathbb{E}_{X \sim R_0}  |\hat{\theta}(Y_1,\ldots,Y_n) - X|, \\
    &\hspace{70pt} \mathbb{E}_{Y_1,\ldots,Y_n \sim R_0} \mathbb{E}_{X \sim R_1}  |\hat{\theta}(Y_1,\ldots,Y_n) - X| \bigr\} - r(1-\delta) \\
    &\geq \inf_{\hat{\theta}} \max \bigl\{ \mathbb{E}_{Y_1,\ldots,Y_n \sim R_0} \mathbb{E}_{X \sim R_0}  |[\hat{\theta}(Y_1,\ldots,Y_n)]_{-r}^r - X|, \\
    &\hspace{70pt} \mathbb{E}_{Y_1,\ldots,Y_n \sim R_0} \mathbb{E}_{X \sim R_1}  |[\hat{\theta}(Y_1,\ldots,Y_n)]_{-r}^r - X| \bigr\} - r(1-\delta) \\
    & = \inf_{\hat{\theta}} \max \bigl\{ r - \delta \mathbb{E}_{Y_1,\ldots,Y_n \sim R_0} ([\hat{\theta}(Y_1,\ldots,Y_n)]_{-r}^r) , r+\delta \mathbb{E}_{Y_1,\ldots,Y_n \sim R_0} ([\hat{\theta}(Y_1,\ldots,Y_n)]_{-r}^r) \bigr\} - r(1-\delta) \\
    & = r\delta + \delta \max \bigl\{ -\mathbb{E}_{Y_1,\ldots,Y_n \sim R_0} ([\hat{\theta}(Y_1,\ldots,Y_n)]_{-r}^r), \mathbb{E}_{Y_1,\ldots,Y_n \sim R_0} ([\hat{\theta}(Y_1,\ldots,Y_n)]_{-r}^r) \bigr\} \geq r \delta = \frac{r \varepsilon}{2-\varepsilon}.
\end{align*}
We therefore conclude the proof.
\end{proof}

\begin{proof}[Proof of \Cref{prop-medianupperbound}]
The analysis of SGD procedure follows similar lines as in \cite{nemirovski2009robust}.
For notational simplicity, we let $\mathbb{E}_Y$ denote the expectation with respect to $(P_{r, \varepsilon}, Q)$, $\mathbb{E}_X$ denote the expectation with respect to $P \in \mathcal{P}_r$ and write $\theta$ for $\theta(P)$.  For $j \in \{1, \ldots, n\}$, let 
\[
    A_j = 2^{-1}(\theta_j - \theta)^2 \quad a_j = \mathbb{E}_Y\{A_j\} = 2^{-1}\mathbb{E}\{(\theta_j - \theta)^2\}.
\]
Let $\Pi_r(\cdot)$ be the projection to $[-r, r]$.  Since $\theta \in [-r, r]$, we have that $\Pi_r(\theta) = \theta$.  It holds that
\begin{align}
    A_{j + 1} & = 2^{-1}\{\Pi_r(\theta_j - \eta_jZ_j) - \theta\}^2 = 2^{-1}\{\Pi_r(\theta_j - \eta_jZ_j) - \Pi_r(\theta)\}^2 \nonumber \\
    & \leq 2^{-1}(\theta_j - \eta_jZ_j - \theta)^2 = A_j + 2^{-1}\eta_j^2 Z_j^2 - \eta_j (\theta_j - \theta)Z_j. \label{eq-Aj1-Aj}
\end{align}
Since $\theta_j$ is independent of $Y_j$, we have that
\begin{align*}
    \mathbb{E}_Y\{(\theta_j - \theta)Z_j\} & = \mathbb{E}_{Y} \{(\theta_j - \theta)s(\theta_j - Y_j)\} = \mathbb{E}_Y \{(\theta_j - \theta) \mathbb{E}[ s(\theta_j - Y_j)| \theta_j]\} \\
    & = \mathbb{E}_Y[(\theta_j - \theta) \{\mathbb{P}_{Y_j|\theta_j}(Y_j < \theta_j) - \mathbb{P}_{Y_j|\theta_j} (Y_j > \theta_j)\}].
\end{align*}
In addition we have that 
\[
    \mathbb{E} \{Z_j^2\} = \left(\frac{e^{\alpha} + 1}{e^{\alpha} - 1}\right)^2, \quad j \in \{1, \ldots, n\}.
\]
Taking expectation of both sides of \eqref{eq-Aj1-Aj}, we obtain that 
\[
    a_{j+1} \leq a_j - \eta_j \mathbb{E}_Y[(\theta_j - \theta) \{\mathbb{P}_{Y_j|\theta_j}(Y_j < \theta_j) - \mathbb{P}_{Y_j|\theta_j} (Y_j > \theta_j)\}] + 2^{-1}\eta_j^2 \left(\frac{e^{\alpha} + 1}{e^{\alpha} - 1}\right)^2,
\]
which is equivalent to
\begin{equation}\label{eq-1}
    \eta_j \mathbb{E}_Y[(\theta_j - \theta) \{\mathbb{P}_{Y_j|\theta_j}(Y_j < \theta_j) - \mathbb{P}_{Y_j|\theta_j} (Y_j > \theta_j)\}] \leq a_j - a_{j+1} + 2^{-1}\eta_j^2 \left(\frac{e^{\alpha} + 1}{e^{\alpha} - 1}\right)^2.
\end{equation}

It follows from the convexity of the absolute value function, we have that 
\[
    \mathbb{E}_X\{|\theta_1 - X|\} \geq \mathbb{E}_X\{|\theta_2 - X|\} + (\theta_1 - \theta_2) \mathbb{E}_X\{\mathrm{sign}(\theta_2 - X)\}.
\]
It then holds that
\[
    \mathbb{E}_Y[(\theta_j - \theta)\mathbb{E}_X\{\mathrm{sign}(\theta_j - X)\}] \geq \mathbb{E}_Y \{\mathbb{E}_X\{|\theta_j - X|\} -  \mathbb{E}_X\{|\theta - X|\}\},
\]
which is equivalent to
\begin{equation}\label{eq-2}
    \eta_j \mathbb{E}_Y[(\theta_j - \theta) \{\mathbb{P}_{X|\theta_j}(X < \theta_j) - \mathbb{P}_{X|\theta_j} (X > \theta_j)\}] \geq \eta_j \mathbb{E}_Y \{\mathbb{E}_X\{|\theta_j - X|\} -  \mathbb{E}_X\{|\theta - X|\}\}.
\end{equation}

Note that
\begin{align}
    & \mathbb{E}_Y[(\theta_j - \theta) \{\mathbb{P}_{Y_j|\theta_j}(Y_j < \theta_j) - \mathbb{P}_{Y_j|\theta_j} (Y_j > \theta_j)\}] \nonumber \\
    = & (1-\varepsilon) \mathbb{E}_Y[(\theta_j - \theta) \{\mathbb{P}_{X|\theta_j}(X < \theta_j) - \mathbb{P}_{X|\theta_j}(X > \theta_j)\}] \nonumber \\
    & \hspace{1cm} + \varepsilon \mathbb{E}_Y[(\theta_j - \theta) \{\mathbb{P}_{R \sim G}(R < \theta_j) - \mathbb{P}_{R \sim G}(R > \theta_j)\}], \label{eq-3}
\end{align}
where $G$ is any arbitrary distribution on $\mathbb{R}$. Combining \eqref{eq-1}, \eqref{eq-2} and \eqref{eq-3}, we have that
\begin{align*}
    & \eta_j \mathbb{E}_Y \{\mathbb{E}_X\{|\theta_j - X|\} -  \mathbb{E}_X\{|\theta - X|\}\} \\
    \leq & (1 - \varepsilon)^{-1} \left\{a_j - a_{j+1} + 2^{-1}\eta_j^2 \left(\frac{e^{\alpha} + 1}{e^{\alpha} - 1}\right)^2\right\} \\
    & \hspace{1cm} + \frac{\varepsilon \eta_j}{1 - \varepsilon} \left|\mathbb{E}_Y[(\theta_j - \theta) \{\mathbb{P}_{R \sim G}(R < \theta_j) - \mathbb{P}_{R \sim G}(R > \theta_j)\}]\right| \\
    \leq & (1 - \varepsilon)^{-1} \left\{a_j - a_{j+1} + 2^{-1}\eta_j^2 \left(\frac{e^{\alpha} + 1}{e^{\alpha} - 1}\right)^2\right\} + \frac{2\varepsilon r \eta_j}{1 - \varepsilon}.
\end{align*}
It then follows that
\begin{align*}
    & \frac{1}{\sum_{j = 1}^n \eta_j}\sum_{i = 1}^n \eta_i \left[\mathbb{E}_{X, Y}\{|X - \theta_i|\} - \mathbb{E}_X\{|X - \theta|\}\right] \\
    \leq & \frac{1}{(1-\varepsilon) \sum_{j = 1}^n \eta_j}\sum_{i = 1}^n  \left\{a_i - a_{i+1} + 2^{-1}\eta_i^2 \left(\frac{e^{\alpha} + 1}{e^{\alpha} - 1}\right)^2\right\} + \frac{2\varepsilon r }{1-\varepsilon} \\
    \leq & \frac{1}{(1-\varepsilon) \sum_{j = 1}^n \eta_j} \left\{a_1 + 2^{-1}\left(\frac{e^{\alpha} + 1}{e^{\alpha} - 1}\right)^2\sum_{i = 1}^n \eta_i^2\right\} + \frac{2\varepsilon r }{1-\varepsilon} \\
    \leq & \frac{2}{(1-\varepsilon) \sum_{j = 1}^n \eta_j} \left\{r^2 + \left(\frac{e^{\alpha} + 1}{e^{\alpha} - 1}\right)^2\sum_{i = 1}^n \eta_i^2\right\} + \frac{2\varepsilon r }{1-\varepsilon}.
\end{align*}
Taking the step sizes to be $\eta_i = \alpha r/\sqrt{n}$, $i = 1, \ldots, n$, we have that 
\[
    \frac{1}{\sum_{j = 1}^n \eta_j}\sum_{i = 1}^n \eta_i \left[\mathbb{E}_{X, Y}\{|X - \theta_i|\} - \mathbb{E}_X\{|X - \theta|\}\right] \leq \frac{2r}{1 - \varepsilon} \left(\frac{1+(e+1)^2}{\sqrt{n\alpha^2}} + \varepsilon\right),
\]
for $\alpha \leq 1$. Finally notice that
\begin{align*}
    & \mathbb{E}_{X, Y}\{|X - \widehat{\theta}|\} - \mathbb{E}_X\{|X - \theta|\} = \mathbb{E}_{X, Y}\left\{\left|\frac{1}{\sum_{j = 1}^n \eta_j} \sum_{i = 1}^n \eta_i (X - \theta_i)\right|\right\} - \mathbb{E}_X\{|X - \theta|\} \\
    \leq & \frac{1}{\sum_{j = 1}^n \eta_j}\sum_{i = 1}^n \eta_i \left[\mathbb{E}_{X, Y}\{|X - \theta_i|\} - \mathbb{E}_X\{|X - \theta|\}\right] \leq \frac{2r}{1 - \varepsilon} \left(\frac{17}{\sqrt{n\alpha^2}} + \varepsilon\right).
\end{align*}
We therefore conclude the proof.
\end{proof}

\end{document}